\newcommand{\E}{\mathbb{E}}
\newcommand{\R}{\mathbb{R}}
\renewcommand{\P}{\mathbb{P}}
\newtheorem*{remark}{Remark}
\newtheorem{assumption}{Assumption}
\begin{document}
\bibliographystyle{ACM-Reference-Format}
% Title portion. Note the short title for running heads

\author{Tim Hellemans}
\author{Benny Van Houdt}
%\orcid{?}
\affiliation{%
  \institution{University Of Antwerp}
  \streetaddress{Middelheimlaan 1}
  \city{Antwerp}
  \postcode{2000}
  \country{Belgium}}
\title[Mean waiting time in large-scale and critically loaded power of d load balancing systems]
{Mean Waiting Time in Large-Scale and Critically Loaded Power of d Load Balancing Systems}

\begin{abstract}
Mean field models are a popular tool used to analyse load balancing policies. In some exceptional cases the waiting time distribution of the mean field limit has an explicit form. In other cases it can be computed as the solution of a set of differential equations. In this paper we study the limit of the mean waiting time $\E[W_\lambda]$ as the arrival rate $\lambda$ approaches $1$ 
for a number of load balancing policies when job sizes are exponential with
mean $1$ (i.e.~when the system gets close to instability). As $\E[W_\lambda]$ diverges to infinity, we scale with $-\log(1-\lambda)$ and present a method to compute the limit $\lim_{\lambda\rightarrow 1^-}-\E[W_\lambda]/\log(1-\lambda)$. We
show that this limit has a surprisingly simple form for the load balancing algorithms
considered. 

More specifically, we present a general result that holds for any policy for which the associated differential equation satisfies a list of assumptions. For the well-known LL($d$) policy which assigns an incoming job to a server with the least work left among $d$ randomly
selected servers these assumptions are trivially verified. For this policy we prove the limit is given by $\frac{1}{d-1}$. We further show that the LL($d,K$) policy,
which assigns batches of $K$ jobs to the $K$ least loaded servers among $d$ randomly selected servers,  
satisfies the assumptions and the limit is equal to $\frac{K}{d-K}$. For a policy which applies LL($d_i$) with probability $p_i$, we show that the limit is given by $\frac{1}{\sum_i p_i d_i - 1}$. We further indicate that our main result can also be used for load balancers with
redundancy or memory.

In addition, we propose an alternate scaling $-\log(p_\lambda)$
instead of $-\log(1-\lambda)$, where $p_\lambda$ is adapted to the policy at hand, 
such that $\lim_{\lambda\rightarrow 1^-}-\E[W_\lambda]/\log(1-\lambda)=\lim_{\lambda\rightarrow 1^-}-\E[W_\lambda]/\log(p_\lambda)$, where the limit $\lim_{\lambda\rightarrow 0^+}-\E[W_\lambda]/\log(p_\lambda)$ is well defined and non-zero (contrary to
$\lim_{\lambda\rightarrow 0^+}-\E[W_\lambda]/\log(1-\lambda)$). This allows to obtain relatively flat curves for $-\E[W_\lambda]/\log(p_\lambda)$ for $ \lambda \in [0,1]$ which indicates that the low and high load
limits can be used as an approximation when $\lambda$ is close to one or zero.

Our results rely on the earlier proven ansatz which asserts that 
for certain load balancing policies the workload distribution of any finite set of queues becomes independent of one another as the number of servers tends to infinity. 
\end{abstract}

\maketitle

\section{Introduction}
Load balancing plays an important role in large scale data networks, server farms, cloud and grid computing. From a mathematical point of view, load balancing policies can be split into two main categories. The first category exists of queue length dependent load balancing policies where the dispatcher collects some information on the number of jobs in some servers and assigns an incoming job using this information. A well studied example of this policy type is the SQ($d$) policy, where an incoming job is assigned to the shortest among $d$ randomly selected servers (see e.g.~\cite{mitzenmacher2001power, vvedenskaya3}). The second category, which is our main focus, consists of workload dependent load balancing policies, for these policies the dispatcher balances the load on the servers by employing information on the amount of work that is left on some of the servers (see also \cite{hellemans2019workload}). This can be done explicitly if we assume the amount of work on servers is known or implicitly by employing some form of redundancy such as e.g.~cancellation on start or late binding (see also \cite{ousterhout2013sparrow}). A well studied policy of this type is the LL($d$) policy, where each incoming job joins the server with the least amount of work left out of $d$ randomly sampled servers (see e.g.~\cite{hellemans2018power}).

In order to compute performance metrics such as the mean waiting time, the waiting time distribution, etc.~most work relies on mean-field models \cite{kurtz1, shneer2020large, hellemans2019workload, jinan2020load, bramsonLB}. Mean field models capture
the limiting stationary behavior of the system as the number of servers tends to infinity
provided that any finite set of servers becomes independent and identically distributed.  Recently this independence was proven for a wide variety of workload dependent load balancing policies in \cite{shneer2020large}. All but one of the workload dependent policies studied in this work fit into the framework of \cite{shneer2020large}. The limiting stationary
workload can therefore be described by the stationary workload distribution of a single server/queue. In order to analyse this queue, termed {\it the queue at the cavity}, the stationary workload distribution is characterized by an Integro Differential Equation, which can sometimes be simplified to a one dimensional Ordinary Differential Equation (ODE) in case job sizes are exponential. Throughout this paper, we assume the job size distribution is exponential with mean one.

We relate to each system size $N$ an arrival rate $\lambda_N$. To obtain the mean field limit as described earlier, one sets $\lambda_N=\lambda N$ for some fixed $\lambda < 1$. One is often interested in the behaviour of the queueing system as the system approaches its critical load. To study this, one could set $\lambda_N = \lambda(N) N$ where $\lambda(N) \rightarrow 1^-$ as $N$ tends to infinity. This approach was for example used in \cite{liu2020steady1, liu2020steady2, brightwell2012supermarket, eschenfeldt2018join} to study the SQ($d$) model in heavy traffic. Another approach, which is the one we use here, is to first obtain the stationary distribution of the mean field model with a fixed $\lambda(N)=\lambda < 1$ and subsequently take the limit $\lambda \rightarrow 1^-$ of the resulting mean field models. For workload dependent policies, we are not aware of any work where the approach of letting $\lambda(N) \rightarrow 1^-$ has been considered. For the SQ($d$) policy, it is shown in \cite{mitzenmacher2001power} that $\lim_{\lambda\rightarrow 1^-} - \frac{\E[W_\lambda]}{\log(1-\lambda)} = \frac{1}{\log(d)}$, with $W_\lambda$ the waiting time distribution for the SQ($d$) policy with arrival rate $\lambda$. However, its proof is a technical computation which relies heavily on the closed form solution of the stationary distribution and does not seem to generalize well.

In this paper we establish a general result which can be employed to obtain the limit: \begin{equation} \label{eq:gen_result_intro}
\lim_{\lambda \rightarrow 1^-} -\frac{\E[W_\lambda]}{\log(1-\lambda)},
\end{equation}
where $W_\lambda$ is the waiting time distribution of a workload dependent load balancing policy (see Theorem \ref{thm:gen_result_ODE} and Corollary \ref{cor:gen_result}). This value can be used as a reference to indicate how well a policy behaves under a high load.
As we divide by $-\log(1-\lambda)$, we are focussing on load balancing policies where
an exponential improvement in the mean waiting time is expected compared to random assignment.
For LL($d$) it is indirectly claimed in \cite{hellemans2018power} that the limit \eqref{eq:gen_result_intro} is given by $\frac{1}{d-1}$, though the proof is incorrect (c.f.~the remark after Corollary \ref{cor:LLd}). 
Our result  provides  a list of sufficient assumptions under which the
limit in \eqref{eq:gen_result_intro} can be computed in a straightforward manner. Although computing the limit is easy, verifying the listed assumptions may present quite a challenge, one of our main contributions is establishing these assumptions for LL($d,K$).

We start by applying our method on LL($d$) providing a first proof for the associated limit. 
We then apply our method to the LL($d,K$) policy (see also \cite{van2019global, ying2017power}). For this policy, jobs are assumed to arrive in batches of size $K$, we then sample $d > K$ servers and the jobs are assigned to the $K$ queues with the least amount of work left. We show in Section \ref{sec:LLdK} that:
\begin{align}
 \lim_{\lambda \rightarrow 1^-} -\frac{\E[W_\lambda]}{\log(1-\lambda)} = \frac{1}{\frac{d}{K} - 1}=\frac{K}{d-K}. \label{eq:lim_LLdK}
\end{align}
for LL($d,K$).
One of the main technical contributions of the paper, apart from establishing Theorem
\ref{thm:gen_result_ODE}, exists in verifying the third assumption of this theorem for LL($d,K$).

Next, we consider the LL($d_1,\dots,d_n,p_1,\dots,p_n$) policy, where with probability $p_i$ we select $d_i$ servers and assign the incoming job to the queue with the least amount of work amongst these $d_i$ selected servers.
We show that for LL($d_1,\dots,d_n,p_1,\dots,p_n$) we have
\begin{align*}
 \lim_{\lambda \rightarrow 1^-} -\frac{\E[W_\lambda]}{\log(1-\lambda)} =\frac{1}{\sum_{i=1}^n p_i d_i - 1}.
\end{align*}
We observe that, when the system is highly loaded, the choice of $p_i$ and $d_i$ does not matter as long as the total amount of redundancy $\sum_{i=1}^n p_i d_i$ remains constant. Furthermore we find a general method to investigate which choice of $p_i$ and $d_i$ yields smaller response times when $\lambda < 1$.

In the special case of LL($1,d,1-p,p$), this policy applies the power of $d$ choices only to a proportion of the incoming jobs and assigns the other jobs arbitrarily. For this policy, we find that whenever $\lambda < 1$ the limiting probability that an arbitrary queue has workload at least $w$ is given by:
\begin{equation}\label{eq:exp_LLdp_WLdist}
\bar F(w)
=
\lambda \bigg[ \frac{1-(1-p)\lambda}{p \lambda^d +(1-(1-p)\lambda-p\lambda^d) e^{(d-1)(1-(1-p)\lambda)w}} \bigg]^{\frac{1}{d-1}},
\end{equation}
but no such solution appears to exist in general. This closed form expression yields an alternative method to obtain the limiting result. Equivalently this model may be described as having an individual arrival process with rate $(1-p)\lambda$ at each server in addition to an LL($d$) arrival stream with rate $p\lambda N$. This type of model was for example studied in \cite{bu2020approximations}.

We also argue that our result can be directly used
for Red($d$) with i.i.d.~replica's and we indicate how our result can be adapted for the SQ-variants of the policies we considered. Furthermore, we already used our general result to compute the same limit for load balancing policies with memory at the dispatcher (see \cite{hellemans2020memory}).

To obtain these results, the main insight we use is the fact that, as $\lambda$ approaches one, all queues have more or less the same amount of work (see also Figure \ref{fig0}). We are able to analytically approximate this amount of work, it represents how well a policy is able to balance loads under a high arrival rate. A similar observation was made in \cite{anton2019stability}, where it was noted that for Redundancy $d$ under Processor Sharing with identical replica's, the workload at all servers diverges to infinity at an equal rate when $\lambda$ exceeds $\frac{1}{d}$.

While $\lim_{\lambda\rightarrow 1^-}-\E[W_\lambda]/\log(1-\lambda)$ is finite and
non-zero, the scaling with $-\log(1-\lambda)$ is not very insightful when $\lambda$
is small as $\lim_{\lambda\rightarrow 0^+}-\E[W_\lambda]/\log(1-\lambda)$ tends to be zero.
We therefore additionally introduce an alternate scaling $-\log(p_\lambda)$, where
the value of $p_\lambda$ is policy specific and discussed in Section \ref{sec:low_traffic},
such that the limit for $\lambda$ tending to one remains the same, while 
$\lim_{\lambda\rightarrow 0^+}-\E[W_\lambda]/\log(p_\lambda)$ is well defined
and non-zero. It turns out that for this scaling the curve $-\E[W_\lambda]/\log(p_\lambda)$
is fairly flat for $\lambda \in [0,1]$ meaning that the low and high load
limits of the alternate scaling can be regarded as a good approximation for low and high loads.

The paper is structured as follows. In Section \ref{sec:ODE} we illustrate the type of policies for which our result is applicable. In Section \ref{sec:result}, we present the main result and indicate that it is applicable to the LL($d$) and Red($d$) policies. In Section \ref{sec:computation} we compute the limiting value as $\lambda \rightarrow 1^-$ for all considered policies and in Section \ref{sec:proof} we give the proof of the main result. In Section \ref{sec:LLdK} we verify the assumptions for LL($d,K$). In Section \ref{sec:l1ld} we cover LL($d_1,\dots,d_n,p_1,\dots,p_n$), here we also consider the case where $\lambda$ is bounded away from $1$ and the special case of LL($1, d, 1-p, p$). We introduce and discuss the alternate scaling by $-\log(p_\lambda)$ in Section \ref{sec:low_traffic}. We provide a selection of numerical experiments in Section \ref{sec:num_exp}. Conclusions are drawn and extensions are suggested in Section \ref{sec:concl}.

\section{General Result}\label{sec:general}
\subsection{The Ordinary Differential Equation} \label{sec:ODE}
Our main result can be applied to functions which are the solution of the ODE:
\begin{equation}
\bar F'(w) = T_\lambda(\bar F(w)) - \bar F(w), \label{eq:ODE}
\end{equation}
with some boundary condition $\bar F(0) =x$ with $x \in [\lambda,1]$. It was shown in \cite{hellemans2018power} that the ccdf of the limiting stationary workload distribution for the LL($d$) policy can be found as the solution of \eqref{eq:ODE} with boundary condition $\bar F(0) = \lambda$ and $T_\lambda(u) = \lambda u^d$. For the Red($d$) policy with i.i.d.~replicas it was proven in \cite{gardnerSIGM} that the ccdf of the response time distribution $\bar F_R(w)$ satisfies the same ODE, that is $\bar F_R(w)$ satisfies \eqref{eq:ODE} with $T_\lambda(u) = \lambda u^d$, but with boundary condition $\bar F_R(0) = 1$.

For LL($d, K$) (c.f.~Section \ref{sec:LLdK}) we find that the ccdf of the workload distribution satisfies \eqref{eq:ODE} with: 
\begin{equation} \label{eq:LLdK_Tlam}
T_\lambda(u) = \frac{\lambda}{K} \sum_{j=0}^{K-1} (K-j) \binom{d}{j} u^{d-j} (1-u)^j,
\end{equation}
and $\bar F(0)=\lambda$.
For the LL($d_1,\dots,d_n, p_1, \dots, p_n$) policy (c.f.~Section \ref{sec:LLd1..dn}) we find that the ccdf of the workload distribution is given by the solution of \eqref{eq:ODE} with:
\begin{equation}\label{eq:Fbar_LLd1dn}
T_\lambda(u) = \sum_i p_i u^{d_i},
\end{equation}
and $\bar F(0)=\lambda$.
For the memory dependent LL($d$) policy, we assume that each arrival has a probability $\pi_0(\lambda)$ to be routed using the LL($d$) policy, while with the remaining probability $1-\pi_0(\lambda)$ it is routed to an empty queue. For this policy we showed in \cite{hellemans2020memory} that the ccdf of the workload distribution $\bar F(w)$ satisfies \eqref{eq:ODE} with:
\begin{equation}\label{eq:Fbar_mem}
T_\lambda(u) = \lambda \pi_0(\lambda) u^d.
\end{equation}

The fact that the solution to \eqref{eq:ODE} captures the limit of the stationary 
workload distribution of a single queue, or the stationary response time distribution
of a job, as the number of queues $N$ tends to
infinity for the policies under consideration, is due to the recent results found in \cite{shneer2020large} (except for the memory dependent case, \eqref{eq:Fbar_mem}). In \cite{shneer2020large}, the authors prove the independence ansatz introduced in \cite{bramsonAAP} for a variety of workload dependent load balancing policies, their approach is based on the following three properties:
\begin{enumerate}[label=(\alph*)]
\item Monotonicity, which essentially states that as we increase the number of probes used per arrival, the delay a job experiences also reduces. \label{item:monotonicity}
\item Work conservation, that is, executed work is never lost.
\item The property that, on average, \textit{new arriving workload prefers to go to servers with lower workloads.} \label{item:lower_loads}
\end{enumerate}
More specifically, in \cite{shneer2020large} the authors prove the 
independence ansatz for any convex combination of LL($d, K$) (with arbitrary job sizes) and Red($d, K$) (with exponential job sizes and i.i.d.~replicas). For the memory dependent load balancing policies proving the ansatz remains an open problem as one is faced with the additional problem of a time-scale separation as the memory content evolves on a different time-scale than the workload. For more details we refer the reader to \cite{hellemans2020memory} and \cite{benaim2008class}.

\subsection{Statement $\&$ Application of the Main Result}\label{sec:result}
We first introduce all assumptions which we require in order to obtain the limiting value of $-\E[W_\lambda] / \log(1-\lambda)$ as $\lambda \rightarrow 1^-$. We illustrate the assumptions by showing that they hold for the LL($d$) and Red($d$) policies, that is for  the choice $T_\lambda(u) = \lambda u^d$.

\begin{assumption}\label{item:existulam}
There exists a $\bar \lambda \in (0,1)$ such that:
\begin{itemize}
\item For $\lambda\in (\bar \lambda, 1)$ there exists a $u \in (1,\infty): T_\lambda(u) = u$. We define $u_\lambda \in (1,\infty)$ as the minimal value for which $T_\lambda(u_\lambda) = u_\lambda$.
\item The function $u_\cdot : \lambda \rightarrow u_\lambda$ is continuous and $\lim_{\lambda \rightarrow 1^-} u_\lambda = 1$.
\end{itemize}
\end{assumption}
For $T_\lambda(u)=\lambda u^d$, we can set $\bar \lambda = 0$ and
finding $u_\lambda$ reduces to obtaining the smallest solution 
of $u = \lambda u^d$ in $(1,\infty)$. We quickly find that $
u_\lambda = \lambda^{\frac{1}{1-d}}$, which is obviously 
continuous in $\lambda$  and converges to one as $\lambda$ approaches $1$.
\begin{assumption}\label{item:Tu_smaller_u}
For all $u \in (0,1]$, we have:
\begin{itemize}
\item $T_\lambda(0) = 0$, $T_\lambda(u) < u$ and $\lim_{\lambda\rightarrow 1^-} \frac{T_\lambda(u)}{u} < 1$,
\item $\left( \frac{T_\lambda(u)}{u} \right) ' \geq 0$, which implies that $T_\lambda$ is increasing on $(0,1)$.
\end{itemize}
\end{assumption}
We have $\frac{T_\lambda(u)}{u} = \lambda u^{d-1}$ from which this assumption trivially follows.

\begin{assumption} \label{item:h_decreasing}
For all $\lambda \in ( \bar \lambda, 1)$ we define:
\begin{equation}\label{eq:def_hlam}
h_\lambda(x) = \frac{u_\lambda - T_\lambda(u_\lambda - x)}{x}.
\end{equation}
There is some $b \in \mathbb{N}$ such that for all $\lambda \in (\bar \lambda,1)$ we have $h_\lambda(x)$ is decreasing for $x \in [u_\lambda - \lambda^b,1)$.
\end{assumption}

For $T_\lambda(u) = \lambda u^d$, we find (with $h_\lambda(x)$ defined as in \eqref{eq:def_hlam}):
\begin{equation}\label{eq:hlam_LLd}
h_{\lambda}(x)
=
\frac{\lambda^{\frac{1}{1-d}} - \lambda (\lambda^{\frac{1}{1-d}} - x)^d }{x},
\end{equation}
its derivative is given by:
$$
h_\lambda'(x)
=
\frac{\lambda \left(\lambda^{\frac{1}{1-d}}-x\right)^{d-1} \left(\lambda^{\frac{1}{1-d}}+(d-1)
   x\right)-\lambda^{\frac{1}{1-d}}}{x^2}
$$
differentiating $x^2 h_\lambda'(x)$ once more yields:
$$
(x^2 h_\lambda'(x))'=-\lambda(d-1)d\left(\lambda^{\frac{1}{1-d}} -x \right)^{d-2} x,
$$
which is obviously negative for $x \in [0,1)$. Hence, this assumption now follows with $b=0$ from the fact that $(x^2 h_\lambda'(x))$ equals $0$ for $x=0$.

\begin{assumption}\label{item:ODE_only_new_req}
For any $\lambda \in (\bar \lambda, 1)$ we let $\bar w_\lambda \in [0,\infty)$ be the smallest value for which $\bar F(\bar w_\lambda) \leq \lambda^b$. There is some $\bar w$ which can be chosen independently of $\lambda$ such that $\bar w_\lambda \leq \bar w$.
\end{assumption}
As we showed assumption \ref{item:h_decreasing} with $b=0$, we find that $\bar w_\lambda=0$ for all $\lambda \in [0,1)$ from which this assumption trivially follows with $\bar w = 0$.
\begin{remark}
For assumption \ref{item:ODE_only_new_req} it suffices in general to show that $\bar F(w) \leq \lambda e^{-(1-\lambda) w }$. Indeed, to have $\lambda e^{-(1-\lambda) w} \leq \lambda^b$ it suffices to have $b-1  \leq \bar w_\lambda$. Therefore one may pick $\bar w=b-1$. Note that $\lambda e^{-(1-\lambda)w}$ is the probability that the workload of an M/M/1 queue is at least $w$, therefore it suffices that the policy is at least as good as random routing.
\end{remark}

\begin{assumption} \label{item:lim_hlam}
There is some $A \in (1,\infty)$ for which $\lim_{\lambda\rightarrow 1^-} h_\lambda(u_\lambda-\lambda^b)=A$.
\end{assumption}
For $h_\lambda(x)$ given by \eqref{eq:hlam_LLd} we note that:
$$
h_\lambda(u_\lambda - 1)=\frac{\lambda^{\frac{1}{1-d}}-\lambda}{\lambda^{\frac{1}{1-d}}-1} \underset{\lambda \rightarrow 1^-}{\longrightarrow} d,
$$
where the limit statement can be shown using l'Hopital's rule. Therefore this assumption holds for LL($d$) and Red($d$) with $A=d$.

\begin{assumption}\label{item:lim_ulam}
There is some $B \in [0,\infty)$ for which $\lim_{\lambda\rightarrow 1^-} \frac{\log(u_\lambda-\lambda^b)}{\log(1-\lambda)}=B$.
\end{assumption}
Using $u_\lambda = \lambda^{\frac{1}{1-d}}$ and $b=0$, we find that $B=1$, when
$T_\lambda(u)= \lambda u^d$, by a simple application of l'Hopital's rule.

\begin{assumption} \label{item:lim_heps}
We have $\lim_{\varepsilon \rightarrow 0^+} \lim_{\lambda\rightarrow 1^-} h_{\lambda}(\varepsilon) = A$.
\end{assumption}
We note that for $T_\lambda(u)=\lambda u^d$:
$$
\lim_{\lambda \rightarrow 1^-} h_{\lambda}(\varepsilon)
=
\frac{1-(1-\varepsilon)^d}{\varepsilon}
\underset{\varepsilon \rightarrow 0^+}{\longrightarrow} d,
$$
from which assumption \ref{item:lim_heps} follows. We are now in a position to state our general result.
\begin{theorem}\label{thm:gen_result_ODE}
For any $\lambda \in (0,1)$ we let $\bar F:[0,\infty) \rightarrow [0,1]$ be a solution to \eqref{eq:ODE} with $\bar F(0)=x$ for some fixed $x \in [\lambda, 1]$, where we assume $\bar F$ is the unique continuously differentiable solution to this ODE. Further, we assume that $T_\lambda$ satisfies Assumptions \ref{item:existulam} - \ref{item:lim_heps}. We then have:
\begin{equation}\label{eq:gen_lim}
\lim_{\lambda\rightarrow 1^-} - \frac{\int_0^\infty \bar F(w)\, dw}{\log(1-\lambda)} = \frac{B}{A-1}.
\end{equation}
\end{theorem}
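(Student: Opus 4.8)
My plan is to reduce $\int_0^\infty\bar F(w)\,dw$ to a one-dimensional integral and then isolate its logarithmic divergence as $\lambda\to1^-$. Since Assumption~\ref{item:Tu_smaller_u} forces $T_\lambda(\bar F(w))<\bar F(w)$ whenever $\bar F(w)\in(0,1]$, the solution $\bar F$ is strictly decreasing with $\bar F(w)\downarrow0$ and decays exponentially (so the integral is finite); hence $v=\bar F(w)$ is a genuine change of variables and, using $\bar F'(w)=T_\lambda(\bar F(w))-\bar F(w)$,
\[
\int_0^\infty\bar F(w)\,dw=\int_0^{x}\frac{v\,dv}{v-T_\lambda(v)} .
\]
From \eqref{eq:def_hlam} one also has the (tautological but useful) identity $v-T_\lambda(v)=(u_\lambda-v)\big(h_\lambda(u_\lambda-v)-1\big)$.

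I would next discard a bounded contribution. Using Assumption~\ref{item:ODE_only_new_req}, $\int_0^\infty=\int_0^{\bar w_\lambda}+\int_{\bar w_\lambda}^\infty$, where the first term is at most $\bar w_\lambda\,\bar F(0)\le\bar w=O(1)$ and the second equals $\int_0^{x'}\frac{v\,dv}{v-T_\lambda(v)}$ with $x'=\bar F(\bar w_\lambda)=\min(x,\lambda^b)\le\lambda^b$. Fixing a small $\delta\in(0,1)$, split this into $\int_0^{u_\lambda-\delta}(\cdot)+\int_{u_\lambda-\delta}^{x'}(\cdot)$, which is legitimate for $\lambda$ near $1$ because $u_\lambda\to1$ and $u_\lambda-x'\le u_\lambda-\lambda^b\to0$ by Assumption~\ref{item:existulam}. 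On $[0,u_\lambda-\delta]$, which lies in $[0,1)$ once $\lambda$ is close to $1$, monotonicity of $T_\lambda(\cdot)/\cdot$ (Assumption~\ref{item:Tu_smaller_u}) together with $\lim_{\lambda\to1^-}T_\lambda(u)/u<1$ at a fixed $u<1$ yields $v-T_\lambda(v)\ge v(1-c_\delta)$ for some $c_\delta<1$, so $\int_0^{u_\lambda-\delta}\frac{v\,dv}{v-T_\lambda(v)}\le\frac1{1-c_\delta}=O_\delta(1)$ uniformly in $\lambda$.

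The heart of the matter is $I_\lambda:=\int_{u_\lambda-\delta}^{x'}\frac{v\,dv}{v-T_\lambda(v)}=\int_{u_\lambda-x'}^{\delta}\frac{(u_\lambda-s)\,ds}{s\,(h_\lambda(s)-1)}$ after the substitution $s=u_\lambda-v$. Here $s$ ranges over a small interval (at most $\approx\delta$ once $\lambda$ is near $1$), so $u_\lambda-s\to1$, and $h_\lambda(s)\to A$: Assumption~\ref{item:h_decreasing} makes $h_\lambda$ monotone on $[u_\lambda-\lambda^b,1)\supseteq[u_\lambda-x',\delta]$, so $h_\lambda(s)$ lies between $h_\lambda(\delta)$ and $h_\lambda(u_\lambda-x')$, both of which are close to $A$ — $h_\lambda(u_\lambda-\lambda^b)\to A$ by Assumption~\ref{item:lim_hlam}, and $\lim_{\lambda\to1^-}h_\lambda(\delta)$ is close to $A$ for $\delta$ small by Assumption~\ref{item:lim_heps} (the residual case $x'=x$, which forces $b\le1$, is handled the same way since then $u_\lambda-1\le u_\lambda-x'\le u_\lambda-\lambda\to0$). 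Consequently, given $\eta>0$, for $\delta$ small and $\lambda$ near $1$ the integrand is squeezed between $L_\eta/s$ and $U_\eta/s$ with $L_\eta,U_\eta\to\frac1{A-1}$ as $\eta\to0$, so $I_\lambda$ lies between $L_\eta\big(\log\delta-\log(u_\lambda-x')\big)$ and $U_\eta\big(\log\delta-\log(u_\lambda-x')\big)$.

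Finally, writing $\int_0^\infty\bar F\,dw=O_\delta(1)+I_\lambda$, dividing by $-\log(1-\lambda)$, and letting $\lambda\to1^-$: the $O_\delta(1)$ and $\log\delta$ contributions disappear, and $\frac{\log(u_\lambda-x')}{\log(1-\lambda)}\to B$ — directly from Assumption~\ref{item:lim_ulam} when $x'=\lambda^b$, and by a squeeze between $u_\lambda-1$ and $u_\lambda-\lambda$ when $x'=x$. Thus for every $\eta>0$ the $\liminf$ and $\limsup$ of $-\int_0^\infty\bar F\,dw/\log(1-\lambda)$ lie between $L_\eta B$ and $U_\eta B$, and letting $\eta\to0$ gives $\frac B{A-1}$. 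I expect the main obstacle to be the uniformity in the third step: the sandwich for $h_\lambda(s)$ must hold simultaneously for all $s\in[u_\lambda-x',\delta]$ and all $\lambda$ near $1$, which forces a careful interleaving of the limits $\lambda\to1^-$, $\delta\to0^+$, $\eta\to0^+$ and uses Assumptions~\ref{item:h_decreasing}, \ref{item:lim_hlam}, \ref{item:lim_heps} in concert — monotonicity to interpolate between the two endpoint values, the two one-sided limits to pin those endpoints near $A$.
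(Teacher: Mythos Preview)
Your proposal is correct and is essentially the paper's own argument in different coordinates. The paper introduces $H(w)=u_\lambda-\bar F(w)$, uses $H'/H=h_\lambda(H)-1$, and bounds the level-crossing time $w_{\varepsilon,\lambda}$ by integrating this from $\bar w_\lambda$ to $w_{\varepsilon,\lambda}$; your change of variables $v=\bar F(w)$ followed by $s=u_\lambda-v$ produces the very same integral $\int \frac{ds}{s(h_\lambda(s)-1)}$ (with the harmless extra factor $u_\lambda-s\approx1$ carried along), your $\delta$ plays the role of the paper's $\varepsilon$, your uniform bound on $\int_0^{u_\lambda-\delta}\frac{v\,dv}{v-T_\lambda(v)}$ is exactly the paper's tail bound \eqref{eq:int_end_ODE}, and the sandwich on $h_\lambda(s)$ via Assumptions~\ref{item:h_decreasing}, \ref{item:lim_hlam}, \ref{item:lim_heps} is identical.
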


For most of our applications, $\int_0^\infty \bar F(w) \, dw$ is equal to the expected workload.
 The next Corollary shows  that the mean queue length is in fact equal to the mean workload
 for some of the policies considered in this paper, allowing us to obtain the mean waiting
 time from the mean workload using Little's law.
 
\begin{corollary}\label{cor:gen_result}
Consider LL($d$), LL($d,K$) or $LL(d_1,\ldots,d_n,p_1,\ldots,p_n)$ with
job sizes that are exponential with mean one and assume 
the ccdf of the workload distribution $\bar F(w)$ satisfies the requirements outlined in Theorem \ref{thm:gen_result_ODE}, then the mean queue length is equal to the mean workload. In particular:
\begin{equation}\label{eq:gen_limit_response}
\lim_{\lambda\rightarrow 1^-} - \frac{\E[W_\lambda]}{\log(1-\lambda)} 
= \lim_{\lambda\rightarrow 1^-} - \frac{\E[R_\lambda]}{\log(1-\lambda)} 
= \lim_{\lambda\rightarrow 1^-} - \frac{\E[Q_\lambda]}{\log(1-\lambda)}
= \lim_{\lambda\rightarrow 1^-} - \frac{\E[L_\lambda]}{\log(1-\lambda)}
= \frac{B}{A-1},
\end{equation}
where $W_\lambda$, $R_\lambda$, $Q_\lambda$ and $L_\lambda$ denote the waiting time, response time, queue length and workload distribution for the load balancing policy with load $\lambda$.
\end{corollary}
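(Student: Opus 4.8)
The plan is to deduce all four limits in \eqref{eq:gen_limit_response} from Theorem \ref{thm:gen_result_ODE}: the theorem already delivers the workload limit, so it suffices to bridge from the mean workload to the mean queue length (via an elementary identity $\E[L_\lambda]=\E[Q_\lambda]$) and from there, through Little's law, to the mean response and waiting times.

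First, for LL($d$), LL($d,K$) and $LL(d_1,\dots,d_n,p_1,\dots,p_n)$ the mean-field descriptions recalled in Section \ref{sec:ODE} (built on \cite{shneer2020large,hellemans2018power}) identify the solution $\bar F$ of \eqref{eq:ODE} with $\bar F(0)=\lambda$ as the ccdf of the stationary workload of the queue at the cavity, so that $\int_0^\infty \bar F(w)\,dw=\E[L_\lambda]$. Since $\bar F$ is assumed to satisfy Assumptions~\ref{item:existulam}--\ref{item:lim_heps}, \eqref{eq:gen_lim} immediately yields $\lim_{\lambda\to 1^-}-\E[L_\lambda]/\log(1-\lambda)=B/(A-1)$.

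Next I would prove $\E[Q_\lambda]=\E[L_\lambda]$ at every fixed $\lambda<1$ by applying Little's law twice to the queue at the cavity, which is a single FIFO server with exponential, mean-one service requirements whose per-server job arrival rate equals $\lambda$ for each of the three policies (for LL($d,K$), batches of size $K$ arrive at rate $\lambda N/K$, giving again rate $\lambda$ per server). Applied to the number of jobs in the server, Little's law gives $\E[Q_\lambda]=\lambda\,\E[R_\lambda]$. Applied instead to the unfinished work, a tagged job of size $S$ contributes $SW+S^2/2$ to the time integral $\int_0^T V(t)\,dt$ of the workload over its sojourn — $S$ units throughout its waiting time $W$, and a linearly decreasing amount over its own service of duration $S$ — and since $S$ is drawn independently of the workload $W$ the job finds, with $\E[S]=1$ and $\E[S^2]=2$ the renewal--reward identity gives $\E[L_\lambda]=\lambda(\E[W_\lambda]+1)=\lambda\,\E[R_\lambda]=\E[Q_\lambda]$, which is precisely the first assertion of the corollary.

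Finally, combining the two previous steps gives $\lim_{\lambda\to1^-}-\E[Q_\lambda]/\log(1-\lambda)=B/(A-1)$; since $\E[R_\lambda]=\E[Q_\lambda]/\lambda$ with $\lambda\to1^-$, the same limit holds for $\E[R_\lambda]$; and since $\E[W_\lambda]=\E[R_\lambda]-1$ while $-\log(1-\lambda)\to\infty$, the additive constant is asymptotically negligible, so the limit is unchanged for $\E[W_\lambda]$, which proves \eqref{eq:gen_limit_response}. The only step requiring genuine care is the second use of Little's law: one must justify the renewal--reward computation of a job's contribution to $\int_0^T V(t)\,dt$ and verify that an arriving job's own service requirement is independent of the workload it encounters, even after conditioning on the event that the job is routed to the tagged server — this holds because, although the routing decision is workload-dependent, job sizes are generated independently of the system state. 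Everything else reduces mechanically to Theorem \ref{thm:gen_result_ODE}.
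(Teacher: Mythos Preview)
Your proof is correct and reaches the same conclusion, but the route to the key identity $\E[L_\lambda]=\lambda\,\E[R_\lambda]$ differs from the paper's. You obtain it probabilistically, via a second application of Little's law (essentially Brumelle's $H=\lambda G$ relation) to the workload process: a tagged job contributes $SW+S^2/2$ to the time-integral of the workload, and independence of $S$ from the encountered workload together with $\E[S]=1$, $\E[S^2]=2$ yields the identity. The paper instead derives it analytically from the ODE: it rewrites \eqref{eq:ODE} as the integral equation $\bar F(w)=\lambda e^{-w}+\int_0^w T_\lambda(\bar F(u))e^{u-w}\,du$, integrates both sides over $[0,\infty)$, and recognises $T_\lambda(\bar F(w))/\lambda$ as the ccdf of the waiting time, giving $\E[L_\lambda]/\lambda=1+\E[W_\lambda]$ directly. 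Your argument is more portable (it does not use the specific form of \eqref{eq:ODE} and would apply to any FIFO cavity with i.i.d.\ mean-one, second-moment-two service times independent of the encountered workload), whereas the paper's argument stays entirely inside the ODE framework and, as a by-product, identifies $T_\lambda(\bar F(\cdot))/\lambda$ explicitly as the waiting-time tail, which is of independent interest. Your flagged caveat about independence of $S$ and $W$ conditional on routing is the right thing to check, and your justification is sound for all three policies.
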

\begin{proof}
We first note that if $\bar F'(w) = T_\lambda(\bar{F}(w)) - \bar F(w)$
and $\bar F(0) = \lambda$, then $\bar F(w)$ also satisfies the following fixed point equation:
$$
\bar F(w) = \lambda - \lambda \int_0^w \left( 1 - \frac{T_\lambda(\bar F(u))}{\lambda} \right) e^{u-w} \, du,
$$
which can be seen by replacing $T_\lambda(\bar F(u))$ by $\bar F'(u)+F(u)$ and using integration
by parts. This fixed point equation can be further simplified to:
$$
\bar F(w) = \lambda e^{-w} + \int_0^w T_\lambda(\bar F(u)) e^{u-w} \, du.
$$
Integrating both sides from $0$ to infinity, we obtain (using Fubini):
\begin{equation} \label{eq:proof_resp1}
\frac{1}{\lambda} \int_0^\infty \bar F(w) \, dw = 1 + \int_0^\infty \frac{T_\lambda(\bar F(w))}{\lambda} \, dw.
\end{equation}
One can see that for the policies considered $T_\lambda(\bar F(w))$ is the arrival rate to servers with $w$ or more work, from this it follows that $\frac{T_\lambda(\bar F(w))}{\lambda}$ is the probability an arbitrary arrival has a waiting time which exceeds $w$. We can thus write \eqref{eq:proof_resp1} as $\frac{\E[L_\lambda]}{\lambda} = 1 + \E[W_\lambda] = \E[R_\lambda]$.

From this observation, combining \eqref{eq:proof_resp1} and Little's law, it follows that the mean workload is indeed equal to the mean queue length. The equations given in \eqref{eq:gen_limit_response} now easily follow, indeed the first equality follows from $\E[R_\lambda] = \E[W_\lambda] + 1$, the second equality is Little's Law, the third equality is what we just proved and the last equality follows by applying Theorem \ref{thm:gen_result_ODE}.
\end{proof}

As we already showed all assumptions for LL($d$), it follows by applying Corollary \ref{cor:gen_result} that:
\begin{corollary}\label{cor:LLd}
Let $W_\lambda$ denote the waiting time distribution for the LL($d$) policy with arrival rate $\lambda$ and exponential job sizes with mean one, we find:
\begin{align}\label{eq:lim_LLd}
\lim_{\lambda\rightarrow 1^-} - \frac{\E[W_\lambda]}{\log(1-\lambda)} = \frac{1}{d-1}
\end{align}
\end{corollary}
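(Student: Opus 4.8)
The plan is to assemble the pieces already in place rather than to re-derive anything from scratch. First I would recall that, for the LL($d$) policy with exponential job sizes of mean one, the ccdf $\bar F$ of the limiting stationary workload distribution of the queue at the cavity is the unique continuously differentiable solution of \eqref{eq:ODE} with $T_\lambda(u)=\lambda u^d$ and boundary condition $\bar F(0)=\lambda$, which lies in the admissible range $[\lambda,1]$; the fact that this ODE indeed captures the $N\to\infty$ mean-field limit follows from \cite{hellemans2018power} together with the independence ansatz established in \cite{shneer2020large}. This places us squarely in the hypotheses of Theorem \ref{thm:gen_result_ODE}.

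Next I would invoke the per-assumption verifications carried out in the excerpt for the choice $T_\lambda(u)=\lambda u^d$: one takes $\bar\lambda=0$, $u_\lambda=\lambda^{1/(1-d)}$ (the minimal fixed point of $u=\lambda u^d$ in $(1,\infty)$), and $b=0$, so that $\bar w=0$; the convexity-type computation on $x^2 h_\lambda'(x)$ gives Assumption \ref{item:h_decreasing}, and an $l'$Hopital calculation on $h_\lambda(u_\lambda-1)=(\lambda^{1/(1-d)}-\lambda)/(\lambda^{1/(1-d)}-1)$ yields $A=d$, while $\log(u_\lambda-1)/\log(1-\lambda)\to 1$ gives $B=1$, and $\lim_{\lambda\to1^-}h_\lambda(\varepsilon)=(1-(1-\varepsilon)^d)/\varepsilon\to d$ as $\varepsilon\to0^+$ gives Assumption \ref{item:lim_heps}. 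With all of Assumptions \ref{item:existulam}–\ref{item:lim_heps} verified, Theorem \ref{thm:gen_result_ODE} gives $\lim_{\lambda\to1^-}-\int_0^\infty\bar F(w)\,dw/\log(1-\lambda)=B/(A-1)=1/(d-1)$. Finally, since LL($d$) is one of the policies explicitly covered by Corollary \ref{cor:gen_result}, the same value governs $\E[W_\lambda]$, so $\lim_{\lambda\to1^-}-\E[W_\lambda]/\log(1-\lambda)=1/(d-1)$.

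As for the main obstacle: for this corollary there really is none of substance, since all the heavy lifting has been front-loaded into the (deferred) proof of Theorem \ref{thm:gen_result_ODE} and into the checks for $T_\lambda(u)=\lambda u^d$. The only points deserving a moment's care are confirming that $\bar F(0)=\lambda\in[\lambda,1]$ and that the ODE solution is unique and continuously differentiable (both immediate from \cite{hellemans2018power}), and carrying out the two elementary $l'$Hopital limits that produce $A=d$ and $B=1$ — these are the sole genuinely policy-specific computations, and they present no difficulty.
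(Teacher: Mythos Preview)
Your proposal is correct and follows precisely the paper's own approach: the paper simply notes that all of Assumptions \ref{item:existulam}--\ref{item:lim_heps} have already been verified for $T_\lambda(u)=\lambda u^d$ (with $A=d$, $B=1$, $b=0$, $\bar w=0$) and then applies Corollary \ref{cor:gen_result}. There is nothing to add or correct.
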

\begin{remark}
The equality in \eqref{eq:lim_LLd} appears in Theorem 7.2 found in \cite{hellemans2018power}, however there is an incorrect use of the Moore-Osgood Theorem, as the limit function $U$ is not necessarily continuous. In fact, its continuity is exactly what needs to be shown. Therefore this paper presented the \textit{first complete proof} of this result.
\end{remark}

For the Red($d$) policy with i.i.d.~replicas the waiting time is not clearly defined, therefore we state this result w.r.t.~response time, we find from Theorem \ref{thm:gen_result_ODE}:
\begin{corollary}
Let $R_\lambda$ denote the response time distribution for the Red($d$) policy with i.i.d.~replicas, arrival rate $\lambda$ and exponential job sizes with mean one, we find:
$$
\lim_{\lambda \rightarrow 1^-} - \frac{\E[R_\lambda]}{\log(1-\lambda)} = \frac{1}{d-1}
$$
\end{corollary}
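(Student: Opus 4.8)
The plan is to read this off directly from Theorem~\ref{thm:gen_result_ODE}, the only substantive point being to check that the boundary condition $\bar F_R(0)=1$, which differs from the $\bar F(0)=\lambda$ used for LL($d$), does not invalidate any of the assumptions. Recall from Section~\ref{sec:ODE} that, by \cite{gardnerSIGM}, the ccdf $\bar F_R$ of the response time distribution for Red($d$) with i.i.d.\ replicas is the unique continuously differentiable solution of \eqref{eq:ODE} with $T_\lambda(u)=\lambda u^d$ and $\bar F_R(0)=1$; since $1\in[\lambda,1]$ for every $\lambda\in(0,1)$, this is exactly the template required by the theorem.

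First I would observe that Assumptions~\ref{item:existulam}, \ref{item:Tu_smaller_u}, \ref{item:h_decreasing}, \ref{item:lim_hlam}, \ref{item:lim_ulam} and \ref{item:lim_heps} depend only on $T_\lambda$, and have all already been verified in Section~\ref{sec:result} for the choice $T_\lambda(u)=\lambda u^d$; there it was found that $u_\lambda=\lambda^{1/(1-d)}$, $b=0$, $A=d$ and $B=1$. The one assumption whose statement refers to the solution $\bar F$ itself, and hence a priori to the boundary condition, is Assumption~\ref{item:ODE_only_new_req}. Here the key remark is that, because $b=0$, the threshold $\lambda^b$ equals $1$, so $\bar w_\lambda$ is the smallest $w$ with $\bar F_R(w)\le 1$; as $\bar F_R$ takes values in $[0,1]$ this forces $\bar w_\lambda=0$ regardless of the value of $\bar F_R(0)$, and Assumption~\ref{item:ODE_only_new_req} holds with $\bar w=0$ exactly as for LL($d$). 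In particular, no analogue of the $\bar F(w)\le\lambda e^{-(1-\lambda)w}$ bound from the remark after Assumption~\ref{item:ODE_only_new_req} is needed.

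With all hypotheses in place, Theorem~\ref{thm:gen_result_ODE} yields
\[
\lim_{\lambda\to 1^-} -\frac{\int_0^\infty \bar F_R(w)\,dw}{\log(1-\lambda)} = \frac{B}{A-1} = \frac{1}{d-1}.
\]
To finish, since $\bar F_R$ is the complementary cdf of the non-negative random variable $R_\lambda$, one has $\E[R_\lambda]=\int_0^\infty \bar F_R(w)\,dw$, and the claimed limit follows. The only real obstacle is the change of boundary condition, and it is dispatched by the single observation that $b=0$ for $T_\lambda(u)=\lambda u^d$, making the threshold in Assumption~\ref{item:ODE_only_new_req} vacuous for any ccdf; everything else is a verbatim reuse of the computations already carried out for LL($d$).
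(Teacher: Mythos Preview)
Your proof is correct and follows exactly the paper's approach: the paper simply states ``we find from Theorem~\ref{thm:gen_result_ODE}'' and relies on the fact that all assumptions on $T_\lambda(u)=\lambda u^d$ were already checked in Section~\ref{sec:result}, together with the fact that the theorem is stated for any boundary value $\bar F(0)=x\in[\lambda,1]$, which includes $x=1$. Your added remark that $b=0$ makes Assumption~\ref{item:ODE_only_new_req} vacuous for any ccdf is a clean way to spell out the one point that could otherwise look boundary-condition dependent.
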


For the memory scheme, in the particular case where servers probe the dispatcher when they become idle, we showed in \cite{hellemans2020memory} that $\pi_0(\lambda) = \frac{1- (1-\lambda^d)^{\frac{1}{M+1}}}{\lambda^d}$ (with $M$ the memory size). From this we easily computed the values $A = d$ and $B = \frac{1}{M+1}$ and it follows that:
$$
\lim_{\lambda \rightarrow 1^-} - \frac{\E[L_\lambda]}{\log(1-\lambda)} = \frac{1}{M+1} \cdot \frac{1}{d-1}.
$$

In the next section we show that computing the values of $A$ and $B$ is in general
not hard. Thus our result allows one to quickly obtain an expression for the
limiting value $B/(A-1)$. Of course to formally prove that it is the correct limiting value,
the assumptions must be verified.

\subsection{Computation of the limit}\label{sec:computation}
\subsubsection{LL($d, K$)}
For this policy $T_\lambda(u)$ is given by equation \eqref{eq:LLdK_Tlam}. First note that by differentiating $u_\lambda = T_\lambda(u_\lambda)$ one obtains $\lim_{\lambda\rightarrow 1^-} u_\lambda' =-K/(d-K)$ (see also \eqref{eq:Dulam_LLdk_paper}).

We now compute the value $A$ of assumption \ref{item:lim_hlam}, to this end we note that:
$$
\lim_{\lambda \rightarrow 1^-} h_{\lambda} (u_\lambda - \lambda^b)=\lim_{\lambda \rightarrow 1^-} \frac{T_{\lambda}(u_\lambda) - T_\lambda(\lambda^b)}{u_\lambda-\lambda^b}.
$$
Furthermore, one can see that in both $\frac{T_{\lambda}(u_\lambda)} {u_\lambda-\lambda^b}$ and  $\frac{T_{\lambda}(\lambda^b)} {u_\lambda-\lambda^b}$ the  terms with $j\geq 2$ 
of $T_\lambda$ disappear in the limit of $\lambda \rightarrow 1^-$. Therefore we find that:
\begin{align*}
\lim_{\lambda\rightarrow 1^-} h_{\lambda}(u_\lambda - \lambda^b)
&=
\lim_{\lambda \rightarrow 1^-} \frac{\lambda}{K} \bigg[
K \frac{u_{\lambda}^d - \lambda^{bd}}{u_\lambda - \lambda^b} + (K-1) d \frac{u_{\lambda}^{d-1} (1-u_{\lambda}) - \lambda^{b(d-1)} (1-\lambda^b)}{u_\lambda - \lambda^b} \bigg].
\end{align*}
The result now follows by applying l'Hopital's rule to conclude that:
$$
\lim_{\lambda \rightarrow 1^-} h_{\lambda}(u_\lambda - \lambda^b)
=
\frac{1}{K} \left( Kd + (K-1) d (-1) \right) = \frac{d}{K},
$$
completing the proof of assumption \ref{item:lim_hlam} with $A = \frac{d}{K}$.

For assumption \ref{item:lim_ulam} it follows by applying l'Hopital's rule that $B=1$. From this one may conjecture that the limiting value:
\begin{equation} \label{eq:lim_LLdK}
\lim_{\lambda \rightarrow 1^-} - \frac{\E[W_\lambda]}{\log(1-\lambda)} = \frac{1}{\frac{d}{K}-1} = \frac{K}{d-K}
\end{equation}
holds. In Section \ref{sec:LLdK} we verify the other assumptions (proving the above limit). Assumption \ref{item:h_decreasing} is particularly difficult to show for LL($d, K$).

\subsubsection{LL($d_1,\dots, d_n, p_1,\dots, p_n$)}
For this policy we have $T_\lambda(u)$ given by \eqref{eq:Fbar_LLd1dn}, from this we find that
\begin{align*}
u_{\lambda}'
&= \frac{u_{\lambda}}{\lambda} + \lambda \sum_{i=1}^n p_i d_i u_{\lambda}^{d_i-1} u_{\lambda}'.
\end{align*}
Taking the limit $\lambda \rightarrow 1^-$ one finds that $\lim_{\lambda\rightarrow 1^-} u_\lambda' = \frac{-1}{\sum_i p_i d_i -1}$. From this one can show that assumption \ref{item:lim_hlam} holds with $A = \sum_i p_i d_i$ and assumption \ref{item:lim_ulam} with $B=1$. We verify the remaining assumptions in Section \ref{sec:LLd1..dn}.

\begin{remark}
For the queue length dependent variants one denotes by $u_k$ the probability that the cavity queue has $k$ or more jobs in its server. These $u_k$ can be found from the recursive relation $u_{k+1} = T_\lambda(u_k)$, where the function $T_\lambda$ is given by the same function as the one for the workload dependent variant. For example, for SQ($d, K$) one can show that $u_{k+1} = T_\lambda(u_k)$, where $T_\lambda$ is defined as in \eqref{eq:LLdK_Tlam}. This allows one to construct a proof, based on the same idea as the one illustrated in Figure \ref{fig0}, that for the queue length dependent variants one has:
$$
\lim_{\lambda \rightarrow 1^-} - \frac{\E[W_\lambda]}{\log(1-\lambda)} = \frac{B}{\log(A)}.
$$
In particular, for the SQ($d, K$) policy this shows that $\lim_{\lambda \rightarrow 1^-} - \frac{\E[W_\lambda]}{\log(1-\lambda)} = \frac{1}{\log(\frac{d}{K})}$. Noting that $A-1 < \log(A)$ for all $A > 1$, this shows that in the limit $\lambda \rightarrow 1^-$ the workload based variants always yield smaller waiting times.
\end{remark}

\subsection{Proof of Theorem \ref{thm:gen_result_ODE}} \label{sec:proof}
\begin{figure*}[t]
\begin{subfigure}{0.45\textwidth}
\centering
\captionsetup{width=.8\linewidth}
\includegraphics[width=1\linewidth]{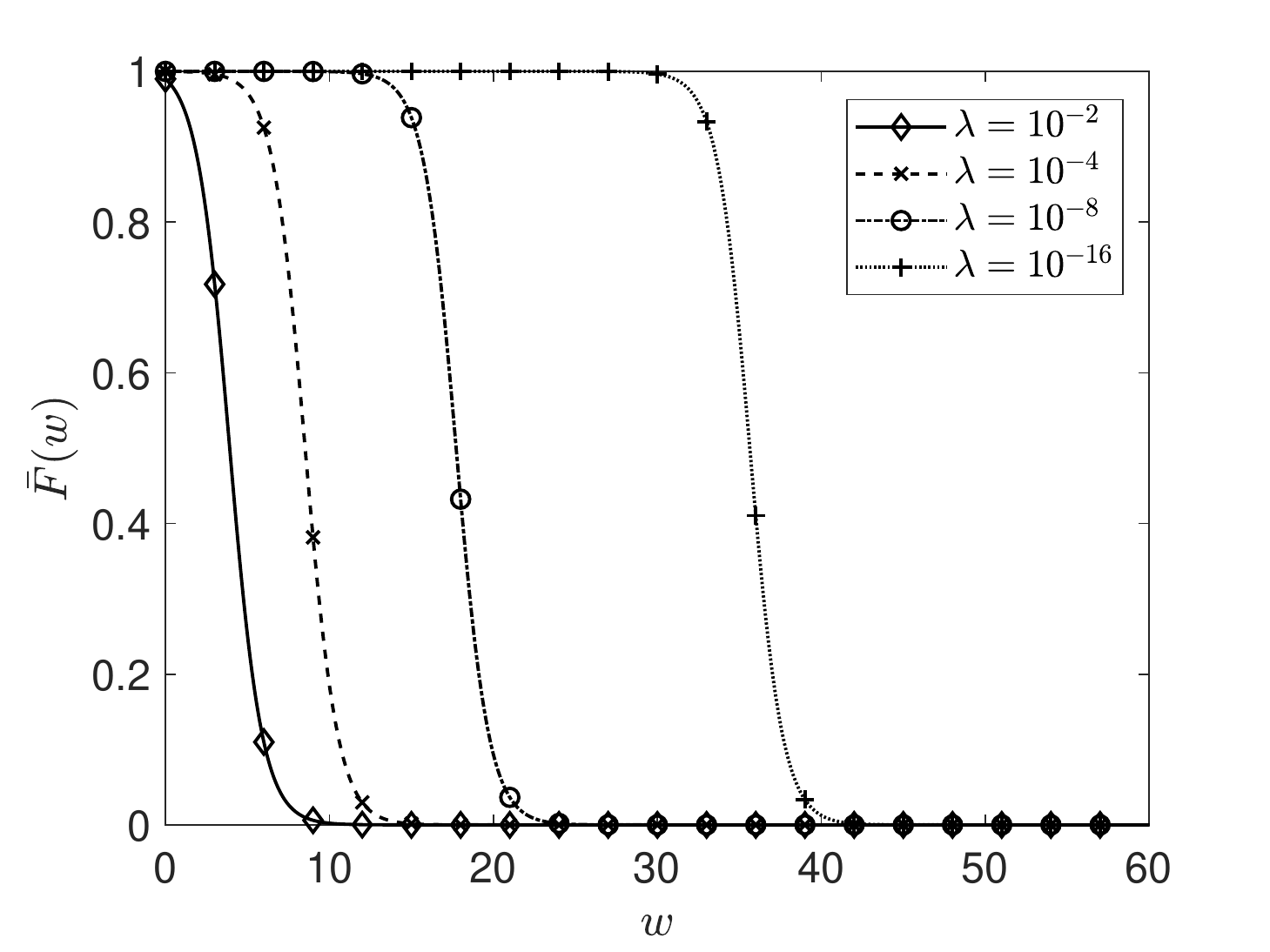}
\caption{ Plot of $\bar F(w)$ for $d=2$ and various values of $\lambda \approx 1$.}
\label{fig0a}
\end{subfigure}
\begin{subfigure}{0.45\textwidth}
\centering
\captionsetup{width=.8\linewidth}
\includegraphics[width=1\linewidth]{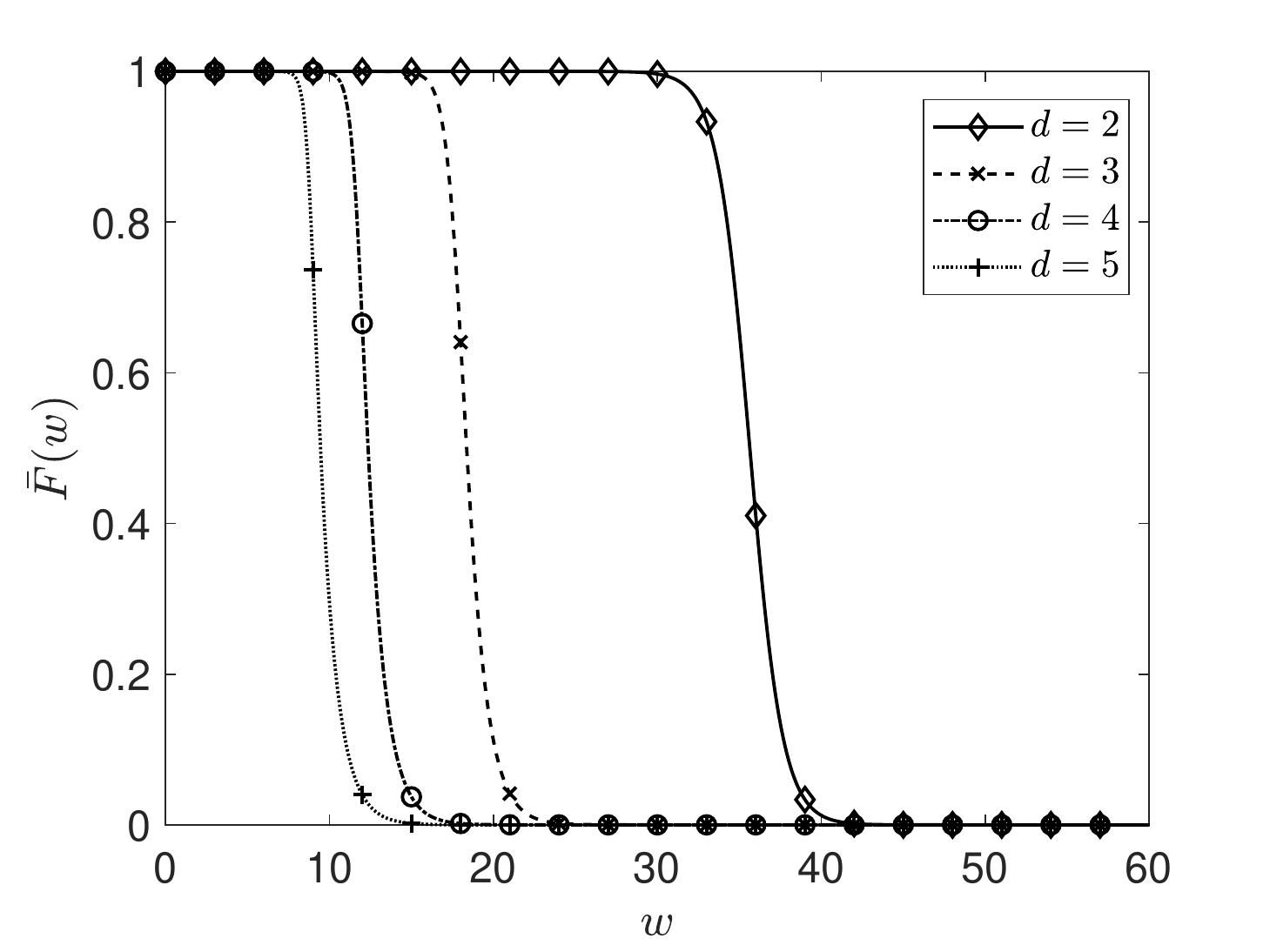}
\caption{ Plot of $\bar F(w)$ for $\lambda = 1-10^{-16}$ and various values of $d$.}
\label{fig0b}
\end{subfigure}
\caption{This figure illustrates that the tail behaviour of $\bar F(w)$ is the same for all values of $\lambda$, this is the main idea used in Theorem \ref{thm:gen_result_ODE}.}
\label{fig0}
\end{figure*}
The main idea used in the proof of Theorem \ref{thm:gen_result_ODE} is the fact that the tail behaviour of $\bar F(w)$ is identical for all values of $\lambda$, while the point at which the tail \textit{initiates its descend} moves further to the right as the value of $\lambda$ approaches $1$. This can be seen in Figure \ref{fig0a}, where we plot $\bar F(w)$ for $d=2$ and $\lambda=1-10^{-2}, 1-10^{-4}, 1-10^{-8}$ and $1-10^{-16}$. Moreover, we observe in Figure \ref{fig0b} that increasing the value of $d$, moves the tail of the function $\bar F(w)$ to the left which corresponds to having a smaller expected waiting time when $\lambda \approx 1$. Our proof boils down to formalizing the idea that there exists some $w_{\lambda}$ such that $\bar F(w) \approx 1$ for $w \leq w_\lambda$, while the integral $\int_{w_{\lambda}}^\infty \bar F(w) \, dw$ remains bounded for all $\lambda$.  

\begin{proof}
Our strategy exists in showing that $\bar F(w)$ stays close to one for a long enough time and then decays sufficiently fast to zero. Throughout the proof, we assume that $\lambda\in (\bar \lambda, 1)$. 
Due to assumption \ref{item:Tu_smaller_u} and $\bar F(w) \in [0,1]$, 
we find that $\bar F(w)$ is decreasing on $[0,\infty)$
and therefore $\lim_{w \rightarrow \infty} \bar F(w) = z$ exists.
As $0 = \lim_{w \rightarrow \infty} \bar F'(w)= \lim_{w \rightarrow \infty} T_\lambda(
\bar F(w))-z$ and $T_\lambda$ is continuous,  we have $0= T_\lambda(z)-z$. Hence, assumption \ref{item:Tu_smaller_u} yields that $z=0$.

Define $u_\lambda$ as in assumption \ref{item:existulam} and let $H(w)= u_\lambda - \bar F(w)$. We find:
$$
H'(w) = u_\lambda - T_\lambda(u_\lambda - H(w)) - H(w),
$$
therefore we have $\frac{H'(w)}{H(w)} = h_{\lambda}(H(w))-1$. 
For any $w \geq \bar w_{\lambda}$, we have $H(w) \geq u_\lambda - \lambda^b$
and due to assumption \ref{item:h_decreasing} this yields :
\begin{equation}
\frac{H'(w)}{H(w)} = h_{\lambda}(H(w))-1 \leq h_{\lambda}(u_\lambda - \lambda^b) - 1. \label{eq:H_prime_over_H_upper}
\end{equation}
Now let $0 < \varepsilon < 1$ be arbitrary. As $H(w)$ increases (from $u_\lambda-\bar F(0)$ to 
$u_\lambda$), we can define $w_{\varepsilon, \lambda}$ such that $H(w_{\varepsilon, \lambda}) = \varepsilon$ for $\lambda$ large enough due to assumption \ref{item:existulam} which 
implies that $u_\lambda-\bar F(0)$ and $u_\lambda$ tends to $0$ and $1$, respectively. In fact we assume w.l.o.g.~that $\lambda$ is sufficiently close to one such that $u_\lambda-\lambda^b \leq \varepsilon$. Therefore 
$\bar w_{\lambda} \leq w_{\varepsilon, \lambda}$ as $H(w)$ is increasing and
$H(\bar w_\lambda) = u_\lambda - \lambda^b$.
By integrating \eqref{eq:H_prime_over_H_upper} from $\bar w_{\lambda}$ to $w_{\varepsilon,\lambda}$ we find:
\begin{align*}
\log\left( \frac{H(w_{\varepsilon,\lambda})}{H(\bar w_\lambda)} \right)
&=\int_{\bar w_\lambda}^{w_{\varepsilon, \lambda}} \frac{H'(u)}{H(u)} \, du\\
&\leq (w_{\varepsilon,\lambda}- \bar w_{\lambda}) \cdot (h_{\lambda}(u_\lambda - \lambda^b)-1).
\end{align*}
Dividing both sides by $-\log(1-\lambda)$ and taking the limit $\lambda \rightarrow 1^-$ we obtain:
\begin{align*}
\lim_{\lambda \rightarrow 1^-} - \frac{\log(\varepsilon) - \log(u_{\lambda} - \lambda^b)}{\log(1-\lambda)}
&= \lim_{\lambda \rightarrow 1^-} - \frac{\log(H(w_{\varepsilon,\lambda}))-\log(u_{\lambda}-\lambda^b)}{\log(1-\lambda)}\\
& \leq \lim_{\lambda \rightarrow 1^-} -\left(
\frac{w_{\varepsilon,\lambda}}{\log(1-\lambda)} \cdot \left( h_{\lambda}(u_{\lambda}-\lambda^b) - 1) \right)
\right),
\end{align*}
as $\bar w_\lambda$ is bounded by $\bar w$ due to assumption \ref{item:ODE_only_new_req}.
Applying assumptions \ref{item:lim_hlam} and \ref{item:lim_ulam} we obtain:
$$
\frac{B}{A-1} \leq  \lim_{\lambda \rightarrow 1^-} -\frac{w_{\varepsilon,\lambda}}{\log(1-\lambda)}.
$$
For any $w \leq w_{\varepsilon, \lambda}$ we have $1-\varepsilon \leq u_{\lambda} - \varepsilon 
= \bar F(w_{\varepsilon, \lambda}) \leq \bar F(w)$. It follows that:
\begin{align*}
(1-\varepsilon) \frac{B}{A-1}
&\leq \lim_{\lambda \rightarrow 1^-} - \frac{\int_{0}^{w_{\varepsilon,\lambda}} (1-\varepsilon) \, du}{\log(1-\lambda)} \leq \lim_{\lambda \rightarrow 1^-} - \frac{\int_0^\infty \bar F(w)\, dw}{\log(1-\lambda)}.
\end{align*}
This shows one inequality by letting $\varepsilon\rightarrow 0^+$. To show the other we first note that for any $w \in (\bar w_{\lambda}, w_{\varepsilon, \lambda})$ we have 
$u_\lambda-\lambda^b \leq H(w) \leq \varepsilon$ and therefore also:
$$
\frac{H'(w)}{H(w)}
=
h_{\lambda}(H(w)) - 1
\geq
h_{\lambda}(\varepsilon) - 1.
$$
Integrating both sides from $\bar w_\lambda$ to $w_{\varepsilon,\lambda}$ we find:
$$
\log \left( \frac{H(w_{\varepsilon,\lambda})}{H(\bar w_{\lambda})} \right) \geq (w_{\varepsilon, \lambda} - \bar w_{\lambda}) (h_\lambda(\varepsilon) - 1).
$$
Dividing both sides by $-\log(1-\lambda)$ and taking the limit of $\lambda \rightarrow 1^-$, this implies that we have (also use assumption \ref{item:lim_ulam}):
\begin{align*}
\lim_{\lambda \rightarrow 1^-} - \frac{w_{\varepsilon,\lambda}}{\log(1-\lambda)} (h_\lambda(\varepsilon) - 1)
&\leq \lim_{\lambda \rightarrow 1^-} - 
\left( \frac{\log(\varepsilon) - \log(u_{\lambda} - \lambda^b)}{\log(1-\lambda)} \right)=B.
\end{align*}
Note that we have:
\begin{align}
\int_{w_{\varepsilon, \lambda}}^\infty \bar F(u) \, du = \int_{w_{\varepsilon, \lambda}}^\infty \frac{\bar F(u)}{\bar F'(u)} \, d\bar F(u) = \int_0^{u_{\lambda} - \varepsilon} \frac{1}{1-\frac{T_\lambda(x)}{x} } \, dx, \label{eq:int_end_ODE}
\end{align}
assuming that $\lambda$ is sufficiently close to one, we find from assumption \ref{item:Tu_smaller_u} that \eqref{eq:int_end_ODE} is bounded by $$(u_{\lambda} - \varepsilon)^2/((u_{\lambda} - \varepsilon)-T_\lambda(u_\lambda-\varepsilon)),$$ which can be bounded uniformly in $\lambda$. This allows us to obtain:
\begin{align*}
\lim_{\lambda \rightarrow 1^-} - \frac{\int_0^\infty \bar F(w) \, dw}{\log(1-\lambda)}
&\leq \lim_{\lambda \rightarrow 1^-} -\frac{\int_0^{w_{\varepsilon,\lambda}}1 \, du}{\log(1-\lambda)} =\lim_{\lambda \rightarrow 1^-} - \frac{w_{\varepsilon, \lambda}}{\log(1-\lambda)} \leq \lim_{\lambda \rightarrow 1^-} \frac{B}{h_\lambda(\varepsilon)-1}.
\end{align*}
Taking the limit $\varepsilon \rightarrow 0^+$ and applying assumption \ref{item:lim_heps}, this completes the proof.
\end{proof}

\section{LL($d,K$)} \label{sec:LLdK}
We consider the LL($d,K$) model, that is, with rate $\lambda/K$ a group of $K$ i.i.d.~jobs which have an exponential size with mean $1$ arrive to the $K$ least loaded servers amongst $d$ randomly selected servers. In this section we give the detailed proof that \eqref{eq:lim_LLdK} is indeed valid for LL($d, K$). It is shown in \cite{hellemans2019performance} that the ccdf of the equilibrium workload distribution $\bar F(w)$ satisfies the ODE \eqref{eq:ODE} with $T_\lambda(u)$ given by \eqref{eq:LLdK_Tlam}.

The fact that $\bar F(w)$ is decreasing and $\int_0^\infty \bar F(w) \, dw < \infty$
is a consequence of the following result:
\begin{proposition}\label{prop:SQdK_recursion_correct}
For any $\lambda, u \in (0,1)$ with $T_\lambda$ defined as in \eqref{eq:LLdK_Tlam} we have $T_{\lambda}(u) \leq \lambda u$. In particular it follows that assumption \ref{item:ODE_only_new_req} holds with $\bar w = b-1$.
\end{proposition}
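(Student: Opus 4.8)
The plan is to prove the algebraic inequality $T_\lambda(u)\le\lambda u$ by recasting $T_\lambda$ in probabilistic terms, after which the estimate reduces to an elementary pointwise domination; the ``in particular'' claim then follows immediately from the ODE \eqref{eq:ODE} together with the remark following Assumption \ref{item:ODE_only_new_req}.

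First I would reformulate $T_\lambda$. Let $X_1,\dots,X_d$ be i.i.d.\ Bernoulli variables with $\P(X_l=1)=1-u$ and put $X=\sum_{l=1}^d X_l\sim\mathrm{Bin}(d,1-u)$. Since $\binom{d}{j}(1-u)^j u^{d-j}=\P(X=j)$, expanding \eqref{eq:LLdK_Tlam} gives
\begin{equation*}
T_\lambda(u)=\frac{\lambda}{K}\sum_{j=0}^{K-1}(K-j)\,\P(X=j)=\frac{\lambda}{K}\,\E\big[\max(K-X,0)\big],
\end{equation*}
where the last equality holds because the $j=K$ summand vanishes and $\max(K-j,0)=0$ for $j>K$. (Concretely, $T_\lambda(u)/\lambda$ is the expected number of jobs in an arriving batch of $K$ that are routed to a server of workload at least $w$ when a fraction $u$ of all servers carry such workload.)

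The key step is the pointwise bound $\max(K-X,0)\le K-\sum_{l=1}^K X_l$. Writing $\max(K-X,0)=K-\min(X,K)$, this is the same as $\min(X,K)\ge\sum_{l=1}^K X_l$: if $X\le K$ then $\min(X,K)=X=\sum_{l=1}^d X_l\ge\sum_{l=1}^K X_l$ since the $X_l$ are nonnegative, while if $X>K$ then $\min(X,K)=K\ge\sum_{l=1}^K X_l$ since each $X_l\le1$ (here $K\le d$ is used only so that the indices $1,\dots,K$ make sense; recall $d>K$). Taking expectations yields $\E[\max(K-X,0)]\le K-K(1-u)=Ku$, hence $T_\lambda(u)\le\frac{\lambda}{K}\cdot Ku=\lambda u$, proving the first assertion.

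For the ``in particular'' part, substituting $T_\lambda(\bar F(w))\le\lambda\bar F(w)$ into \eqref{eq:ODE} gives $\bar F'(w)\le-(1-\lambda)\bar F(w)$, so that $\big(e^{(1-\lambda)w}\bar F(w)\big)'\le0$; with $\bar F(0)=\lambda$ this integrates to $\bar F(w)\le\lambda e^{-(1-\lambda)w}$ for all $w\ge0$. By the remark following Assumption \ref{item:ODE_only_new_req} this is precisely what is needed to conclude that Assumption \ref{item:ODE_only_new_req} holds with $\bar w=b-1$. I do not foresee a genuine obstacle: the only point needing a moment's thought is spotting the domination $\min(X,K)\ge X_1+\cdots+X_K$; a purely algebraic route (induction on $K$, or rewriting $\sum_{j=0}^{K-1}(K-j)\P(X=j)=\sum_{i=0}^{K-1}\P(X\le i)$ and estimating directly) is possible but messier.
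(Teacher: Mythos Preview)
Your argument is correct, and the ``in particular'' part is handled exactly as in the paper (derive $\bar F(w)\le\lambda e^{-(1-\lambda)w}$ from the ODE and invoke the remark after Assumption~\ref{item:ODE_only_new_req}).  For the inequality $T_\lambda(u)\le\lambda u$ itself, however, you take a genuinely different route from the paper.  The paper's proof is purely algebraic: it uses the elementary bound $\tfrac{K-j}{K}\le\tfrac{d-j}{d}$ (valid since $K<d$), then the identity $\tfrac{d-j}{d}\binom{d}{j}=\binom{d-1}{j}$ to factor out one power of $u$, arriving at $T_\lambda(u)\le\lambda u\sum_{j=0}^{K-1}\binom{d-1}{j}u^{d-1-j}(1-u)^j$, which is at most $\lambda u$ because the remaining sum is a partial $\mathrm{Bin}(d-1,1-u)$ probability.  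Your approach is instead probabilistic: you interpret $T_\lambda(u)/\lambda$ as $\tfrac{1}{K}\E[(K-X)^+]$ with $X\sim\mathrm{Bin}(d,1-u)$ and bound it via the pointwise coupling inequality $\min(X,K)\ge X_1+\cdots+X_K$.  The paper's route yields the slightly sharper intermediate estimate $T_\lambda(u)\le\lambda u\,\P(Y\le K-1)$ with $Y\sim\mathrm{Bin}(d-1,1-u)$, while your argument is more conceptual and makes the queueing interpretation (\emph{``at most $K$ of the $K$ jobs go to busy servers, and each of any $K$ fixed probes is busy with probability $u$''}) completely transparent; for the proposition as stated the two are equally effective.
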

\begin{proof}
See Appendix \ref{app:proof_SQdK_recursion_correct}.
\end{proof}
We show that assumption \ref{item:Tu_smaller_u} holds,
note that the first bullet is immediate from the previous result.
\begin{lemma}\label{lem:Tuoveru_increasing_SQdK}
Let $T_\lambda$ be defined as in \eqref{eq:LLdK_Tlam} and let $u \in (0,1)$, the following inequality holds:
\begin{equation}\label{eq:Tuoveru_increasing}
\left(\frac{T_\lambda(u)}{u}\right)' > 0.
\end{equation}
\begin{proof}
See Appendix \ref{proof_lem:Tuoveru_increasing_SQdK}.
\end{proof}
\end{lemma}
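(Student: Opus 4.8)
The plan is to turn \eqref{eq:Tuoveru_increasing} into a statement about the sign of a single polynomial. Since $T_\lambda(u)/u$ is a rational function of $u$, I would first substitute $s = u/(1-u)$ (so $u = s/(1+s)$ and $1-u = 1/(1+s)$), an increasing bijection of $(0,1)$ onto $(0,\infty)$. Plugging this into \eqref{eq:LLdK_Tlam} and cancelling one factor of $u$ yields
\begin{equation*}
\frac{T_\lambda(u)}{u} = \frac{\lambda}{K}\,\frac{P(s)}{(1+s)^{d-1}}, \qquad P(s) := \sum_{j=0}^{K-1}(K-j)\binom{d}{j}\,s^{\,d-1-j},
\end{equation*}
which is a genuine polynomial because $d > K$. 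By the chain rule, since $s$ is an increasing function of $u$, the inequality \eqref{eq:Tuoveru_increasing} is equivalent to $\frac{d}{ds}\!\left[P(s)/(1+s)^{d-1}\right] > 0$ on $(0,\infty)$; and as
$$
\frac{d}{ds}\,\frac{P(s)}{(1+s)^{d-1}} = \frac{(1+s)P'(s) - (d-1)P(s)}{(1+s)^{d}},
$$
it suffices to show that $Q(s) := (1+s)P'(s) - (d-1)P(s)$ is strictly positive for $s > 0$.

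The crux is an explicit evaluation of $Q$. Splitting $Q(s) = P'(s) + \bigl(sP'(s) - (d-1)P(s)\bigr)$, the parenthesised part collapses termwise to $-\sum_{j=1}^{K-1} j(K-j)\binom{d}{j}\,s^{\,d-1-j}$ since $(d-1-j) - (d-1) = -j$, while reindexing gives $P'(s) = \sum_{j=1}^{K}(K-j+1)(d-j)\binom{d}{j-1}\,s^{\,d-1-j}$. For $1 \le j \le K-1$ the coefficient of $s^{\,d-1-j}$ in $Q$ is therefore $(K-j+1)(d-j)\binom{d}{j-1} - j(K-j)\binom{d}{j}$; using $\binom{d}{j} = \tfrac{d-j+1}{j}\binom{d}{j-1}$ to rewrite the second term as $(K-j)(d-j+1)\binom{d}{j-1}$, this equals
$$
\binom{d}{j-1}\Bigl[(K-j+1)(d-j) - (K-j)(d-j+1)\Bigr] = (d-K)\binom{d}{j-1},
$$
the last step being the elementary identity $(a+1)b - a(b+1) = b - a$ with $a = K-j$, $b = d-j$. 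The same value $(d-K)\binom{d}{K-1}$ comes out at $j = K$, so altogether
$$
Q(s) = (d-K)\sum_{j=1}^{K}\binom{d}{j-1}\,s^{\,d-1-j} = (d-K)\sum_{i=0}^{K-1}\binom{d}{i}\,s^{\,d-2-i}.
$$

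To finish, $d > K$ forces $d - K \ge 1$, each binomial coefficient is positive, and each exponent $d-2-i$ with $0 \le i \le K-1$ satisfies $d-2-i \ge d-K-1 \ge 0$, so $Q(s) > 0$ for every $s > 0$, which proves \eqref{eq:Tuoveru_increasing}. I expect the only real labour to be the coefficient bookkeeping of the middle paragraph, but it is routine and telescopes to the single inequality $d - K > 0$; there is no analytic subtlety, since the substitution $s = u/(1-u)$ has traded a rational function for a polynomial whose positivity can be read off its coefficients. (One could alternatively argue from the probabilistic meaning of $T_\lambda(u)$ as a truncated-binomial expectation, but the direct computation above is shorter.)
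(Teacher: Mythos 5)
Your proof is correct and is essentially the paper's own argument in the reciprocal variable: the paper divides $(T_\lambda(u)/u)'$ by $u^{d-2}$ and writes it as a polynomial in $\xi=(1-u)/u$, whose coefficients telescope via the same binomial identity $(K-j)(d-j-1)\binom{d}{j}-(K-j-1)(j+1)\binom{d}{j+1}=(d-K)\binom{d}{j}$ to give $(d-K)\frac{\lambda}{K}\sum_{j=0}^{K-1}\binom{d}{j}\xi^j>0$, which is exactly your $Q(s)$ after substituting $\xi=1/s$. The only difference is cosmetic (you change variables to $s=u/(1-u)$ before differentiating, invoking monotonicity of the substitution, rather than after), so nothing further is needed.
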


We have the following elementary Lemma:
\begin{lemma}\label{lem:fplusag}
Let $f: [0,1] \rightarrow \mathbb{R}$ and $g:[0,1]\rightarrow [0,\infty)$ be continuous differentiable functions and let $h_a(x)=f(x)+ag(x)$ for $a \in [0,1]$. If $f(0)=0$ and $f'(0)<0$ then there exists a value $a_0>0$ such that for all $a \in [0,a_0]$ the function $h_a(x)$ has a root in $[0,1]$. Moreover if we let $x_a = \min\{x \in [0,1] \mid h_a(x)=0\}$ we have $\lim_{a\rightarrow 0^+}x_a = 0$.
\end{lemma}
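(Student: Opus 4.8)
The plan is a direct application of the intermediate value theorem; the only real content is turning the hypothesis $f'(0)<0$ into a usable sign statement. First I would dispose of the degenerate case $g\equiv 0$ on $[0,1]$: then $h_a=f$, and since $f(0)=0$ the point $x=0$ is a root for every $a$, so $x_a=0$ and both assertions are immediate. Hence from now on assume $M:=\max_{x\in[0,1]}g(x)>0$, which is finite since $g$ is continuous on a compact interval.

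Next I would extract the key quantitative fact from $f'(0)<0$: because $f(0)=0$, the difference quotient $f(x)/x$ tends to $f'(0)<0$ as $x\to 0^+$, so there is some $\delta\in(0,1]$ with $f(x)<0$ for all $x\in(0,\delta]$. Now fix any $x^*\in(0,\delta]$ and write $f(x^*)=-m$ with $m>0$. At the left endpoint $h_a(0)=f(0)+a g(0)=a g(0)\ge 0$ (using $g\ge 0$), while $h_a(x^*)=f(x^*)+a g(x^*)\le -m+aM<0$ as soon as $a<m/M$. I would then split on the sign of $h_a(0)$: if $h_a(0)=0$ then $x=0$ is a root; if $h_a(0)>0$, then $h_a$ is continuous with $h_a(0)>0>h_a(x^*)$, so the intermediate value theorem produces a root in $(0,x^*)$. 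In either case $\{x\in[0,1]:h_a(x)=0\}$ is nonempty, and being the preimage of $\{0\}$ under a continuous map it is closed, hence compact, so $x_a:=\min\{x\in[0,1]:h_a(x)=0\}$ is well defined, and the root just produced shows $x_a\le x^*$. Taking $x^*=\delta$ and $a_0:=\min\bigl(1,m/(2M)\bigr)$ (with $m=-f(\delta)$) settles the first assertion.

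For the limit, I would rerun the same argument with $x^*$ shrinking. Given $\varepsilon>0$, put $x^*=\min(\varepsilon,\delta)$ and $m_\varepsilon:=-f(x^*)>0$; the argument above shows that for every $a<m_\varepsilon/M$ the function $h_a$ has a root in $[0,x^*]$, hence $x_a\le x^*\le\varepsilon$. Since $\varepsilon>0$ was arbitrary and $x_a\ge 0$, this gives $\lim_{a\to 0^+}x_a=0$.

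The only steps requiring any care — and neither is difficult — are (i) deducing from $f'(0)<0$ that $f$ is strictly negative on an entire interval $(0,\delta]$, rather than merely at isolated points, which is exactly the definition of the derivative (and, for the limit, observing that this interval can be taken inside $(0,\varepsilon)$); and (ii) checking that the minimum defining $x_a$ is attained, which is continuity together with compactness of $[0,1]$. I do not anticipate any genuine obstacle.
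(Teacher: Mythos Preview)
Your proof is correct and follows essentially the same approach as the paper's: bound $g$ on the compact interval, use $f'(0)<0$ to find a point $\delta$ with $f(\delta)<0$, and apply the intermediate value theorem since $h_a(0)\ge 0$ and $h_a(\delta)<0$ for small $a$. Your write-up is in fact more careful than the paper's (you handle the degenerate case $g\equiv 0$, ensure $a_0\le 1$, justify that the minimum $x_a$ is attained, and spell out the limit argument by shrinking $x^*$), but the underlying idea is identical.
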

\begin{proof}
See Appendix \ref{app:lem:fplusag}.
\end{proof}
The most difficult assumption to verify for the LL($d,K$) policy is assumption \ref{item:h_decreasing}, therefore we first validate the other remaining assumptions (note that we already verified assumptions \ref{item:Tu_smaller_u}, \ref{item:ODE_only_new_req}, \ref{item:lim_hlam} and \ref{item:lim_ulam}). We have:
\begin{lemma}\label{lem:SQdk1}
Let $1\leq K < d$ be fixed. There exists a $\bar \lambda < 1$ such that for all $\lambda \in (\bar \lambda, 1)$, the equation $T_\lambda(u)=u$ with $T_\lambda$ defined as in \eqref{eq:LLdK_Tlam} has a solution on $(1,\infty)$. Moreover, if we let $u_{\lambda}$ denote the minimal solution in $[1,\infty)$ for $\lambda \in (\bar \lambda,1)$ we have:
\begin{enumerate}[label=(\alph*), leftmargin=*]
\item $\lim_{\lambda \rightarrow 1^-} u_{\lambda} = 1$, therefore assumption \ref{item:existulam} holds. \label{enum:SQdK1}
\item We have: $\lim_{\varepsilon \rightarrow 0^+} \lim_{\lambda \rightarrow 1^-} h_\lambda(\varepsilon) = A$. Therefore assumption \ref{item:lim_heps} holds. \label{enum:ulamdK8}
\end{enumerate}
\end{lemma}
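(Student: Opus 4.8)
The plan is to exploit that $T_\lambda(u)=\lambda\,g(u)$, where $g(u):=T_1(u)=\frac{1}{K}\sum_{j=0}^{K-1}(K-j)\binom{d}{j}u^{d-j}(1-u)^j$ is a fixed polynomial, and to reduce the whole statement to the two identities $g(1)=1$ and $g'(1)=\frac{d}{K}$. The first holds because at $u=1$ every $j\ge 1$ term carries a factor $(1-u)^j=0$, leaving only $\frac{1}{K}\cdot K\cdot\binom{d}{0}=1$. For the second, differentiating $u^{d-j}(1-u)^j$ and evaluating at $u=1$ annihilates every $j\ge 2$ term (both resulting monomials still carry a positive power of $1-u$) and contributes $d$ for $j=0$ and $-1$ for $j=1$, so $g'(1)=\frac{1}{K}\big(Kd-(K-1)d\big)=\frac{d}{K}$. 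Since $d>K$ this gives $g(1)=1<\frac{d}{K}=g'(1)$; this single inequality drives everything, and computing $g'(1)$ is the only place where a slip is possible.

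For the existence of $u_\lambda$ and item (a): set $F_\lambda(u):=T_\lambda(u)-u=\lambda g(u)-u$, so $F_1(1)=0$ and $F_1'(1)=g'(1)-1=\frac{d}{K}-1>0$. Hence there is $\delta>0$ with $F_1>0$ on $(1,1+\delta]$, and, after shrinking $\delta$ and taking $\bar\lambda$ close enough to $1$ (using continuity of $(\lambda,u)\mapsto\lambda g'(u)$ and $g'(1)-1>0$), we may also arrange $F_\lambda'(u)=\lambda g'(u)-1>0$ for all $(\lambda,u)\in[\bar\lambda,1)\times(1,1+\delta]$. For such $\lambda<1$ we have $F_\lambda(1)=\lambda-1<0$ while $F_\lambda(1+\delta)\to F_1(1+\delta)>0$ as $\lambda\to 1^-$, so by the intermediate value theorem $F_\lambda$ has a root in $(1,1+\delta)$ for $\lambda$ in some interval $(\bar\lambda,1)$; since $T_\lambda(1)=\lambda\neq 1$, this is the existence claim. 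On $(1,1+\delta)$ the map $F_\lambda$ is strictly increasing and is negative near $u=1^+$, so this root is the unique root of $F_\lambda$ in $(1,1+\delta)$ and therefore equals the minimal root $u_\lambda\in(1,\infty)$; it is simple ($F_\lambda'(u_\lambda)>0$), so the implicit function theorem makes $\lambda\mapsto u_\lambda$ continuously differentiable on $(\bar\lambda,1)$. Re-running the intermediate value argument with an arbitrary $\eta\in(0,\delta)$ in place of $\delta$ forces $u_\lambda<1+\eta$ for $\lambda$ sufficiently close to $1$; combined with $u_\lambda>1$ this gives $\lim_{\lambda\to 1^-}u_\lambda=1$, so Assumption \ref{item:existulam} holds.

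For item (b): fix $\varepsilon\in(0,1)$. By \eqref{eq:def_hlam} we have $h_\lambda(\varepsilon)=\big(u_\lambda-\lambda\,g(u_\lambda-\varepsilon)\big)/\varepsilon$. Letting $\lambda\to 1^-$ and using $u_\lambda\to 1$ together with continuity of $g$ gives $\lim_{\lambda\to 1^-}h_\lambda(\varepsilon)=\big(1-g(1-\varepsilon)\big)/\varepsilon$. Rewriting this as $\big(g(1)-g(1-\varepsilon)\big)/\varepsilon$ and letting $\varepsilon\to 0^+$, differentiability of $g$ at $1$ yields $\lim_{\varepsilon\to 0^+}\lim_{\lambda\to 1^-}h_\lambda(\varepsilon)=g'(1)=\frac{d}{K}=A$, which is Assumption \ref{item:lim_heps}.

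The only point that needs genuine care is the continuity of $\lambda\mapsto u_\lambda$: one must choose $\bar\lambda$ close enough to $1$ that $u_\lambda$ stays inside the strip $(1,1+\delta)$ on which $F_\lambda$ is strictly increasing, so that the minimal root is simple and the implicit function theorem applies uniformly on $(\bar\lambda,1)$. Beyond that there is no real obstacle---everything is a direct consequence of $g(1)=1$ and $g'(1)=\frac{d}{K}$---which is consistent with Assumption \ref{item:h_decreasing} being the genuinely difficult assumption to verify for LL($d,K$).
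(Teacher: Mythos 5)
Your proof is correct, and it reaches the same two facts that drive the paper's argument --- $T_1(1)=1$ and $T_1'(1)=d/K>1$ --- but by a somewhat different route. The paper first divides $u-T_\lambda(u)=0$ by $\lambda u^d$, substitutes $\xi=(u-1)/u$, and then invokes its auxiliary Lemma \ref{lem:fplusag} applied to the decomposition $f(\xi)+a\,g(\xi)$ with $a=\tfrac{1}{\lambda}-1$; the condition $f'(0)=1-d/K<0$ there is exactly your $g'(1)-1>0$ after the change of variables, and Lemma \ref{lem:fplusag} is itself an intermediate-value argument. You instead work directly with $F_\lambda(u)=\lambda g(u)-u$ on a strip $(1,1+\delta]$, using the sign change $F_\lambda(1)<0$, $F_\lambda(1+\delta)>0$ together with strict monotonicity of $F_\lambda$ there to locate the minimal root and squeeze it to $1$. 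This is more elementary (no auxiliary lemma, no substitution) and buys something extra: via simplicity of the root and the implicit function theorem you obtain continuity (indeed $C^1$-dependence) of $\lambda\mapsto u_\lambda$, which the second bullet of Assumption \ref{item:existulam} actually demands but which the paper's proof leaves implicit in its appeal to Lemma \ref{lem:fplusag}. Part (b) is handled identically in both proofs: pass to the limit $\lambda\to 1^-$ inside $h_\lambda(\varepsilon)$ using $u_\lambda\to 1$, recognize a difference quotient of $T_1$ at $1$, and let $\varepsilon\to 0^+$ to get $T_1'(1)=d/K=A$.
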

\begin{proof}
The proof can be found in Appendix \ref{app:lem:SQdk1}.
\end{proof}

To show assumption \ref{item:h_decreasing} we first need to do some additional work, in particular we compute $\lim_{\lambda \rightarrow 1^-} u_{\lambda}^{(n)}$ for $n=1,\dots,K+1$ (c.f.~Lemma \ref{lem:Dulamn}). To show this result, we employ the Fa\`a di Bruno formula which states that for functions $f$ and $g$ we have:
\begin{align*}
(f\circ g)^{(n)}(x)
&=
\sum_{k=1}^n f^{(k)}(g(x)) \cdot B_{n,k}(g'(x),g''(x),\dots, g^{(n-k+1)}(x)),
\end{align*}
where $B_{n,k}$ denotes the exponential Bell polynomial defined as:
\begin{align*}
B_{n,k}(x_1,\dots,x_{n-k+1})
&=
\sum \frac{n!}{j_1!j_2! \cdot \dots \cdot j_{n-k+1}!} \left( \frac{x_1}{1!} \right)^{j_1} \cdot \dots \cdot \left( \frac{x_{n-k+1}}{(n-k+1)!} \right)	^{j_{n-k+1}}.
\end{align*}
Here the sum is taken over all non-negative integers $j_1,\dots, j_{n-k+1}$ which satisfy:
\begin{align*}
k =j_1+\dots+j_{n-k+1}
\qquad \mbox{ and } \qquad n =j_1+2j_2+\dots+(n-k+1)j_{n-k+1}.
\end{align*}
Furthermore we employ the fact that:
\begin{align}\label{eq:Lah}
B_{n,k}(1!,\dots,(n-k+1)!)=\frac{n!}{k!} \binom{n-1}{k-1},
\end{align}
which are known as the Lah numbers. We are now able to show:

\begin{lemma}\label{lem:Dulamn}
For any $d,K$ we have:
\begin{align}\label{eq:Dulam_lowerK}
\lim_{\lambda \rightarrow 1^-} u_\lambda^{(n)} &= (-1)^n n! \frac{d^{n-1} K }{(d-K)^n}
\end{align}
for $1 \leq n \leq K $ and 
\begin{align}
\lim_{\lambda \rightarrow 1^-} u_{\lambda}^{(K+1)} &= (-1)^{K+1} (K+1)! \frac{d^K K}{(d-K)^{K+1}}
- \frac{d!}{(d-K)!} \left( \frac{K}{d-K} \right)^{K+1}\label{eq:Dulam_Kplus1}
\end{align}
\end{lemma}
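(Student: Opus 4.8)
The plan is to exploit the structure $T_\lambda(u)=\lambda g(u)$ with $g(u):=\frac1K\sum_{j=0}^{K-1}(K-j)\binom dj u^{d-j}(1-u)^j$, so that $u_\lambda$ solves the fixed point equation $u_\lambda=\lambda g(u_\lambda)$, equivalently $\lambda=\phi(u_\lambda)$ with $\phi(u):=u/g(u)$. Before computing anything I would settle the regularity underlying the statement: one checks at once that $g(1)=1$ and $g'(1)=d/K$ (only the $j=0$, resp.\ $j=0,1$, terms survive at $u=1$), so the rational function $\phi$ is analytic near $u=1$ with $\phi(1)=1$ and $\phi'(1)=1-d/K\neq0$ (here $d>K$ is used). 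Hence $\phi$ has an analytic local inverse $\psi$ with $\psi(1)=1$, and since $u_\lambda\to1$ as $\lambda\to1^-$ by Lemma~\ref{lem:SQdk1}, we get $u_\lambda=\psi(\lambda)$ for $\lambda$ sufficiently close to one; thus $u_\lambda$ is real-analytic there and $v_n:=\lim_{\lambda\to1^-}u_\lambda^{(n)}$ exists for every $n$.

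The crux is a lemma on the derivatives of $g$ at $u=1$: $g^{(1)}(1)=d/K$, $g^{(k)}(1)=0$ for all $2\le k\le K$, and $g^{(K+1)}(1)=(-1)^{K+1}\tfrac{(K+1)!}{K}\binom{d}{K+1}$. To prove it I would set $t=1-u$, expand $(1-t)^{d-j}=\sum_i\binom{d-j}{i}(-t)^i$, read off the coefficient of $t^k$ in $g(1-t)$ as $\frac1K\sum_j(K-j)\binom dj\binom{d-j}{k-j}(-1)^{k-j}$ (with $0\le j\le\min(k,K-1)$), and use $g^{(k)}(1)=(-1)^k k!\,[t^k]g(1-t)$. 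Splitting the weight $(K-j)$, the "$K$"-part vanishes for $k\ge1$ via $\binom dj\binom{d-j}{k-j}=\binom dk\binom kj$ and $\sum_j\binom kj(-1)^{k-j}=0$, while the "$j$"-part is evaluated using $j\binom dj=d\binom{d-1}{j-1}$, $\binom{d-1}{j-1}\binom{d-j}{k-j}=\binom{d-1}{k-1}\binom{k-1}{j-1}$ and $\sum_i\binom{k-1}{i}(-1)^{k-1-i}=(1-1)^{k-1}$; this is $0$ for $2\le k\le K$ and gives $d/K$ for $k=1$. For $k=K+1$ the $j$-sum only runs to $K-1$, so one completes it to the full binomial sum (value $0$) and subtracts the $j=K$ and $j=K+1$ terms, using $\binom dK(d-K)=(K+1)\binom{d}{K+1}$, to obtain $[t^{K+1}]g(1-t)=\tfrac1K\binom{d}{K+1}$ and hence the claimed value of $g^{(K+1)}(1)$.

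With the lemma in hand, differentiate $u_\lambda=\lambda g(u_\lambda)$ exactly $n$ times. By Leibniz, $u_\lambda^{(n)}=\lambda(g\circ u_\cdot)^{(n)}(\lambda)+n(g\circ u_\cdot)^{(n-1)}(\lambda)$, and by the Fa\`a di Bruno formula $(g\circ u_\cdot)^{(n)}(\lambda)=\sum_{k=1}^ng^{(k)}(u_\lambda)B_{n,k}(u_\lambda',\dots,u_\lambda^{(n-k+1)})$, whose $k=1$ term is $g'(u_\lambda)u_\lambda^{(n)}$ since $B_{n,1}(x_1,\dots,x_n)=x_n$. Isolating $u_\lambda^{(n)}$ and letting $\lambda\to1^-$, every term carrying a factor $g^{(k)}(1)$ with $2\le k\le K$ drops out. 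For $2\le n\le K$ this leaves the one-line recursion $(1-d/K)v_n=n(d/K)v_{n-1}$, i.e.\ $v_n=-\tfrac{nd}{d-K}v_{n-1}$, which together with $v_1=-K/(d-K)$ (differentiate once and let $\lambda\to1^-$) unwinds to \eqref{eq:Dulam_lowerK}. For $n=K+1$ exactly one extra term survives, the $k=K+1$ term $g^{(K+1)}(1)B_{K+1,K+1}(v_1)=g^{(K+1)}(1)v_1^{K+1}$ (using $B_{n,n}(x_1)=x_1^n$), so $(1-d/K)v_{K+1}=g^{(K+1)}(1)v_1^{K+1}+(K+1)(d/K)v_K$; substituting $v_1,v_K$ from \eqref{eq:Dulam_lowerK} and the value of $g^{(K+1)}(1)$, and simplifying via $(K+1)!\binom{d}{K+1}=(d-K)\,d!/(d-K)!$, reproduces \eqref{eq:Dulam_Kplus1}, with the first summand coming from the $v_K$ term and the correction $-\tfrac{d!}{(d-K)!}\left(\tfrac{K}{d-K}\right)^{K+1}$ from the $g^{(K+1)}(1)v_1^{K+1}$ term.

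The main obstacle is the combinatorial lemma on $g^{(k)}(1)$, and within it the case $k=K+1$, where one must carefully restore the truncated binomial sum and track the weights $(K-j)$ versus $j$; once the vanishing $g^{(k)}(1)=0$ for $2\le k\le K$ is established, the Fa\`a di Bruno bookkeeping is painless, as all Bell polynomials collapse to their extreme cases $B_{n,1}$ and $B_{n,n}$. An alternative is to skip the individual values $g^{(k)}(1)$, feed the inductive Ansatz $v_i=i!\,\tfrac Kd\left(\tfrac{-d}{d-K}\right)^i$ directly into the Bell polynomials, and collapse the resulting sums with the Lah-number identity $B_{n,k}(1!,\dots,(n-k+1)!)=\tfrac{n!}{k!}\binom{n-1}{k-1}$; this route still needs the value of $g^{(K+1)}(1)$ and is more cumbersome.
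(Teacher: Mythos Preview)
Your proof is correct and takes a genuinely different route from the paper's. The paper works with $\Theta(u)=g(u)/u$ and the equation $1=\lambda\Theta(u_\lambda)$; it computes $\Theta^{(n)}(1)=(-1)^{n+1}n!\,\tfrac{d-K}{K}$ for $1\le n\le K$, which are all \emph{nonzero}, so after applying Fa\`a di Bruno the Bell polynomials $B_{n,k}(v_1,\dots,v_{n-k+1})$ must be evaluated for every $k$, and this is done by substituting the inductive values $v_i$ and invoking the Lah-number identity $B_{n,k}(1!,\dots,(n-k+1)!)=\tfrac{n!}{k!}\binom{n-1}{k-1}$ together with Pascal's rule. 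Your choice of working with $g(u)=u\Theta(u)$ instead is the key simplification: the extra factor of $u$ makes $g^{(k)}(1)=0$ for $2\le k\le K$ (indeed $g^{(k)}(1)=\Theta^{(k)}(1)+k\Theta^{(k-1)}(1)$, and the two terms cancel), so Fa\`a di Bruno collapses to the single $k=1$ term for $n\le K$ and the two extreme terms $k=1,\,K+1$ for $n=K+1$, yielding the clean one-line recursion $v_n=-\tfrac{nd}{d-K}v_{n-1}$. You also add the regularity argument (analytic inverse function theorem) that justifies the existence of all the limits $v_n$, which the paper takes for granted. The only place your write-up is slightly terse is the case $k=K$ of the combinatorial lemma: since the $j$-sum stops at $K-1$ rather than $k=K$, one should note that the missing term carries weight $K-K=0$ in the original sum, so one may freely extend to $j=K$ before splitting $(K-j)=K-j$; then both the ``$K$''-part and the ``$j$''-part vanish by the full binomial identities you quote. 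With that one sentence added, the argument is airtight and noticeably shorter than the paper's.
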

\begin{proof}
We give a sketch of the proof, for the complete proof we refer to Appendix \ref{app:proof:lem:Dulamn}.
We define $\Theta(u)=\frac{1}{\lambda}\frac{T_\lambda(u)}{u}$, one can compute $\Theta^{(n)}(u)$ by induction and taking the limit of $u \rightarrow 1^-$, it is possible to compute:
\begin{align}
\Theta^{(n)}(1) &= (-1)^{n+1} n! \frac{d-K}{K} & \mbox{for } 1 \leq n \leq K ,  \label{eq:Dtheta_lowerK_paper}\\
\Theta^{(K+1)}(1) &= (-1)^{K+1} \frac{d! - (d-K)! (K+1)!}{(d-K)!} \frac{d-K}{K}. \label{eq:Dtheta_Kplus1_paper}
\end{align}

We now continue by induction to show (\ref{eq:Dulam_lowerK}-\ref{eq:Dulam_Kplus1}). For the case $n=1$ we note that from $u_{\lambda}=T_\lambda(u_{\lambda})$ it follows that:
\begin{align}
u_{\lambda}' &= 
\frac{1}{K} \sum_{j=0}^{K-1} (K-j) \binom{d}{j} u_\lambda ^{d-j} (1-u_\lambda)^j + \frac{\lambda}{K} \sum_{j=0}^{K-1} (K-j) (d-j) \binom{d}{j} u_\lambda^{d-j-1} (1-u_\lambda)^j u_\lambda' \nonumber\\
& -\frac{\lambda}{K} \sum_{j=1}^{K-1} (K-j) j \binom{d}{j} u_\lambda^{d-j} (1-u_\lambda)^{j-1} u_\lambda'. \label{eq:Dulam_LLdk_paper}
\end{align}
Taking the limit $\lambda \rightarrow  1^-$ we obtain $\lim_{\lambda \rightarrow 1^-} u_{\lambda}' = 1 + \frac{d}{K} \lim_{\lambda \rightarrow 1^-} u_{\lambda}'$ yielding \eqref{eq:Dulam_lowerK} with $n=1$. Let $2 \leq n \leq K+1$ and note that $u_\lambda= T_\lambda(u_\lambda)$ may be reformulated as $1=\lambda \Theta(u_\lambda)$. By differentiating both sides $n\geq 2$ times, it follows that we have:
\begin{align}\label{eq:1=lamTheta_paper}
0= n \left( \frac{\partial}{\partial \lambda} \right)^{n-1} \Theta(u_\lambda) + \lambda \left( \frac{\partial}{\partial \lambda} \right)^{n} \Theta(u_\lambda).
\end{align}
It follows from the Fa\`a di Bruno formula that:
\begin{equation}\label{eq:bruno_paper}
\left(\frac{\partial}{\partial \lambda}\right)^n \Theta (u_\lambda)
=
\sum_{k=1}^n \Theta^{(k)}(u_\lambda) B_{n,k}(u_\lambda',\dots, u_{\lambda}^{(n-k+1)})
\end{equation}
where $B_{n,k}$ denotes the exponential Bell polynomial. We have:
$$
B_{n,1}(u_\lambda',\dots,u_\lambda^{(n)})=u_\lambda^{(n)}
$$
(as $j_1 = \ldots = j_{n-1} = 0$ and $j_n=1$) and for $k >1$ using induction and \eqref{eq:Lah} one can show that:
\begin{align*}
\lim_{\lambda \rightarrow 1^-} B_{n,k}(u_\lambda',\dots,u_{\lambda}^{(n-k+1)})
&=
\frac{n!}{k!} \binom{n-1}{k-1} (-1)^n \frac{d^{n-k} K^k}{(d-K)^n}.
\\
\lim_{\lambda \rightarrow 1^-} B_{n-1,k}(u_\lambda',\dots,u_{\lambda}^{(n-k)})
&=
\frac{(n-1)!}{k!} \binom{n-2}{k-1} (-1)^{n-1} \frac{d^{n-k-1} K^k}{(d-K)^{n-1}}.
\end{align*}
Therefore, by combining \eqref{eq:Dtheta_lowerK_paper}, \eqref{eq:1=lamTheta_paper}, \eqref{eq:bruno_paper} and using Pascal's triangle one can compute that for $n \leq K+1$:
\begin{align*}
\lim_{\lambda\rightarrow 1^-} u_\lambda^{(n)}
&=
(-1)^{n+1} \left( \frac{K}{d-K} \right)^{n+1} \Theta^{(n)}(1) -n! \left( \frac{K}{d-K} \right)^n + n! (-1)^n \frac{d^{n-1} K}{(d-K)^n},
\end{align*}
for $n \leq K+1$.
Plugging in 
 \eqref{eq:Dtheta_lowerK_paper} and \eqref{eq:Dtheta_Kplus1_paper} yields
  \eqref{eq:Dulam_lowerK} and \eqref{eq:Dulam_Kplus1}, respectively.
\end{proof}

We are now able to show that assumption \ref{item:h_decreasing} indeed holds:

\begin{lemma}\label{lem:LLdk2}
Let $1\leq K < d$ be fixed, there exists a $\tilde \lambda< 1$ and $b \in \mathbb{N}$ (independent of $\lambda$) such that the function $h_{\lambda}(x)$ defined as in \eqref{eq:def_hlam} with $T_{\lambda}$ as in \eqref{eq:LLdK_Tlam} is decreasing as a function of $x\in[u_\lambda-\lambda^b,u_\lambda]$ for all $\lambda \in (\tilde \lambda,1)$.
\end{lemma}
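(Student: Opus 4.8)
The plan is to turn the monotonicity statement into a single sign inequality and then analyse that inequality asymptotically as $\lambda\to1^-$, handling the bulk of the interval by a compactness/continuity argument and a narrow window near the right endpoint by an explicit Taylor expansion.

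Writing $v=u_\lambda-x$, the claim that $h_\lambda$ is decreasing on $[u_\lambda-\lambda^b,u_\lambda]$ is equivalent to $v\mapsto\frac{u_\lambda-T_\lambda(v)}{u_\lambda-v}$ being increasing on $[0,\lambda^b]$, and differentiating this reduces everything to showing
\[
\Phi_\lambda(v):=(u_\lambda-v)T_\lambda'(v)+T_\lambda(v)-u_\lambda\le 0\qquad\text{for }v\in[0,\lambda^b].
\]
Since $T_\lambda=\lambda t$ with $t$ the $\lambda$-independent polynomial from \eqref{eq:LLdK_Tlam} and $T_\lambda(u_\lambda)=u_\lambda$, a brief Leibniz computation gives $\Phi_\lambda(u_\lambda)=\Phi_\lambda'(u_\lambda)=0$ and $\Phi_\lambda^{(n)}(u_\lambda)=-\lambda(n-1)t^{(n)}(u_\lambda)$ for $n\ge2$, hence the exact finite Taylor identity
\[
\Phi_\lambda(v)=-\lambda\sum_{n=2}^{d}\frac{(n-1)\,t^{(n)}(u_\lambda)}{n!}\,(v-u_\lambda)^{n}.
\]
The structural input that makes the argument work is the closed form $t''(u)=\frac{d!}{K!\,(d-K-1)!}\,u^{d-K-1}(1-u)^{K-1}$, which follows from the classical formula for the derivative of a binomial tail (it also re-proves Lemma~\ref{lem:Tuoveru_increasing_SQdK}): in particular $t''\ge0$ on $[0,1]$, $t^{(n)}(1)=0$ for $2\le n\le K$, and $t^{(K+1)}(1)\ne0$ with sign $(-1)^{K+1}$. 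This vanishing of the first $K$ Taylor coefficients of $t$ at $1$ is exactly why the expansions $u_\lambda^{(1)},\dots,u_\lambda^{(K+1)}$ of Lemma~\ref{lem:Dulamn} are needed.

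On the bulk $v\in[0,1-\eta]$ for a fixed small $\eta>0$, I would use that $u_\lambda\to1$ (Lemma~\ref{lem:SQdk1}) and hence $\Phi_\lambda(v)\to\phi_1(v):=(1-v)t'(v)+t(v)-1$ uniformly there; since $\phi_1=G-1$ with $G(v)=t(v)+(1-v)t'(v)$, $G(1)=1$ and $G'(v)=(1-v)t''(v)>0$ on $(0,1)$, we get $\phi_1<0$ strictly on $[0,1)$, so $\Phi_\lambda<0$ on $[0,1-\eta]$ once $\lambda$ is close enough to $1$. For the remaining range I split using $\delta:=1-\lambda$. On the intermediate piece $v\in(1-\eta,1-M\delta]$ one has $M\delta<u_\lambda-v<2\eta$; in the Taylor identity the $n=K+1$ term is positive and of order $(u_\lambda-v)^{K+1}$ (its sign is $(-1)^{K+1}\cdot(-1)^{K+1}=+1$, pairing the sign of $t^{(K+1)}(1)$ with that of $(v-u_\lambda)^{K+1}$), the terms with $2\le n\le K$ are $O((\delta/(u_\lambda-v))^{K+1-n})=O(1/M)$ relative to it (using $t^{(n)}(u_\lambda)=O((u_\lambda-1)^{K+1-n})$), and the terms with $n>K+1$ are $O((u_\lambda-v)^{n-K-1})=O(\eta)$ relative to it; so choosing $\eta$ small and $M$ large makes the $(K+1)$-st term dominate and $\Phi_\lambda<0$.

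The real obstacle is the window $v\in(1-M\delta,\lambda^b]$, where $u_\lambda-v=\mu\delta$ with $\mu$ confined to a bounded set whose left endpoint tends to $\alpha+b$, where $\alpha:=K/(d-K)$ (using $u_\lambda-1\sim\alpha\delta$, the $n=1$ case of Lemma~\ref{lem:Dulamn}, and $1-\lambda^b\sim b\delta$). Substituting $t^{(n)}(u_\lambda)\sim\frac{t^{(K+1)}(1)}{(K+1-n)!}(u_\lambda-1)^{K+1-n}$ into the Taylor identity and collecting the order-$\delta^{K+1}$ terms yields
\[
\Phi_\lambda(v)=-\lambda\,t^{(K+1)}(1)\,\delta^{K+1}\,\Sigma(\mu)+o(\delta^{K+1}),\qquad (K+1)!\,\Sigma(\mu)=\alpha^{K+1}-(\alpha-\mu)^{K}(K\mu+\alpha),
\]
the closed form for $\Sigma$ coming from a short generating-function computation. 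Since $t^{(K+1)}(1)$ has sign $(-1)^{K+1}$, the inequality $\Phi_\lambda(v)\le0$ here reduces to $\Sigma(\mu)$ having sign $(-1)^{K+1}$ for all $\mu\ge\alpha+b$; as $\Sigma$ is a nonzero polynomial with leading term of sign $(-1)^{K+1}$ at $+\infty$, this holds once $\alpha+b$ exceeds every real root of $\Sigma$, which is what pins down the required $b\in\mathbb{N}$ (and then $M>b$ and $\tilde\lambda$ close to $1$). I expect this window to be the main difficulty: all lower-order contributions cancel identically, so the argument is intrinsically of order $K+1$ — hence the need for Lemma~\ref{lem:Dulamn} — and one must both extract the $(K+1)$-st-order term cleanly and keep the $o(\delta^{K+1})$ remainder uniform over the $\lambda$-dependent range of $\mu$; verifying that these sub-intervals really cover $(1-\eta,\lambda^b]$ is the remaining bookkeeping.
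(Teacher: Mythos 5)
Your proposal is correct in outline and the key identities all check out: the closed form $t''(u)=\frac{d!}{K!\,(d-K-1)!}u^{d-K-1}(1-u)^{K-1}$ is equivalent to the paper's factorization \eqref{eq:DzetaLam_paper} of $\zeta_\lambda'$ (indeed $\zeta_\lambda(x)=K\,\Phi_\lambda(u_\lambda-x)$ and $\Phi_\lambda'(v)=(u_\lambda-v)T_\lambda''(v)$); the finite Taylor identity for $\Phi_\lambda$ is exact; and $(K+1)!\,\Sigma(\mu)=\alpha^{K+1}-(\alpha-\mu)^K(K\mu+\alpha)$ reproduces, at $\mu=\alpha+b$ with $K=d-1$, precisely the paper's limit of $\lambda^b\zeta_\lambda(u_\lambda-\lambda^b)/(u_\lambda-\lambda^b)^d$, and gives $\Sigma(\mu)=\mu^2/2$ for $K=1$, consistent with $b=0$ for LL($d$). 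The route is, however, genuinely different from the paper's. The paper uses $\zeta_\lambda'\le0$ on $[u_\lambda-1,u_\lambda]$ to reduce the whole statement to the sign of $\zeta_\lambda$ at the single endpoint $x=u_\lambda-\lambda^b$, evaluates that sign explicitly only for $K=d-1$ (where the Taylor series of $\Theta$ terminates), and then propagates negativity to arbitrary $(K,d)$ by a two-case comparison induction ($K$ even via $(K,d)\to(K,d+1)$, $K$ odd via $(K+1,d+1)\to(K,d)$) whose essential input is the full list of limits $u_\lambda^{(n)}$, $n\le K+1$, from Lemma \ref{lem:Dulamn}. Your window analysis instead extracts the leading $\delta^{K+1}$ coefficient of $\Phi_\lambda$ directly for arbitrary $(K,d)$ and reads the sign off the leading coefficient of $\Sigma$; this eliminates the induction over $(K,d)$ entirely and uses only $u_\lambda-1\sim\alpha\delta$, i.e.\ the $n=1$ case of Lemma \ref{lem:Dulamn} (your remark that the higher derivatives are ``needed'' applies to the paper's argument, not to yours). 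Two small points: your own observation that $t''\ge0$ on $[0,1]$ gives $\Phi_\lambda'(v)=(u_\lambda-v)T_\lambda''(v)\ge0$ on $[0,\lambda^b]$, hence $\Phi_\lambda(v)\le\Phi_\lambda(\lambda^b)$, so the bulk and intermediate pieces are redundant and only the window at $\mu\to\alpha+b$ actually needs to be checked; and the uniformity of the $o(\delta^{K+1})$ remainder is routine because the $\mu$-dependence enters only through $(-\mu\delta)^n$ with $\mu$ confined to a bounded set, so the bookkeeping you flag is not a real obstacle.
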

\begin{proof}
Here we give a sketch of the proof, the full proof can be found in Appendix \ref{app:proof:lem:LLdk2}.
Throughout, we assume that $\bar \lambda < \lambda < 1$ with $\bar \lambda$ as in Lemma \ref{lem:SQdk1}. We show there is some $\tilde{\lambda} \geq \bar \lambda$ and $b \in \mathbb{N}$ which does not depend on the value of $\lambda$ for which $h_{\lambda}(x)$ is decreasing on $[u_\lambda - \lambda^b, u_{\lambda}]$ for all $\lambda \in (\tilde \lambda, 1)$.

We define $\zeta_\lambda(x)=Kx^2h_\lambda'(x)$ and note that it suffices to show that $\zeta_\lambda(x) \leq 0$ for $\lambda$ sufficiently close to one. To this end one can show that:
\begin{equation} \label{eq:DzetaLam_paper}
\zeta_\lambda'(x)=-\lambda (d-K) \binom{d}{K-1} (d-K+1) (1-u_\lambda+x)^{K-1} (u_\lambda-x)^{d-K-1} x.
\end{equation}
This is obviously negative for all $x \in [u_\lambda-1,u_\lambda]$. It thus suffices to show that we can find a value $b \in \mathbb{N}$ such that $\zeta_\lambda(u_\lambda-\lambda^b) \leq 0$.
Let us denote $\Theta(u) = \frac{1}{\lambda} \frac{T_\lambda(u)}{u}$, we can show that:
\begin{align}
\lambda^b \zeta_\lambda\left(u_\lambda-\lambda^b\right)
&= -\lambda K u_\lambda \lambda^b (\Theta(u_\lambda) - \Theta(\lambda^b))
 + \lambda (d-K) (u_\lambda - \lambda^b) \sum_{j=0}^{K-1} \binom{d}{j} \lambda^{b(d-j)} (1-\lambda^b)^j. \label{eq:lambzetaulamminlamb_paper}
\end{align}
For now let us focus on the case $K=d-1$. For this case we have:
\begin{equation}\label{eq:DnTheta_paper}
\Theta^{(n)}(u)=(-1)^{n+1} n! \frac{1}{d-1} \sum_{j=0}^{d-n-1} \binom{d}{j} u^{d-j-n-1} (1-u)^j,
\end{equation}

employing the Taylor expansion of $\Theta(u_\lambda)$ at $\lambda^b$, we find that \eqref{eq:lambzetaulamminlamb_paper} can be written as:
\begin{align}
\lambda^b \zeta_\lambda(u_\lambda-\lambda^b)
&=
\lambda \bigg[ (u_\lambda - \lambda^b) \sum_{j=0}^{d-2} \binom{d}{j} \lambda^{b(d-j)} (1-\lambda^b)^j
- u_\lambda \lambda^b \sum_{n=1}^{d-1} (d-1) \Theta^{(n)}(\lambda^b) \frac{(u_\lambda-\lambda^b)^n}{n!}\bigg].\label{eq:taylor_exp_paper}
\end{align}
Combining \eqref{eq:DnTheta_paper} and \eqref{eq:taylor_exp_paper} we are able to compute:
\begin{align*}
\lim_{\lambda \rightarrow 1^-} \frac{\lambda^b}{(u_\lambda-\lambda^b)^d} \zeta_\lambda(u_\lambda-\lambda^b) 
&=-d \left( \frac{b}{b+d-1} \right)^{d-1} + \left(\frac{b}{b+d-1}\right)^d + (-1)^{1+d} \left( \frac{d-1}{b+d-1} \right)^d,
\end{align*}
which converges to $1-d \leq 0$ as $b$ tends to infinity. This proves Lemma \ref{lem:LLdk2}
for $K=d-1$.

Fix $K$ and let $d \geq K+1$ be variable. Let $(K_1,d_1)$ and $(K_2,d_2)$ be arbitrary (with $K_i < d_i$), denote by $_i u _\lambda$ the fixed point associated to $(K_i,d_i)$ and $_i\zeta_\lambda$ the associated $\zeta_\lambda$ function. We can show the following inequalities:
\begin{align}
_2 \zeta _\lambda'(x) & \leq\  _1\zeta _\lambda'(x+\ _1u_\lambda -\ _2u_\lambda) & \mbox{for }  x \in [\ _2u_\lambda - 1,\ _2u_\lambda-\lambda^b] \label{eq:ineq_zeta_prime_paper}\\
_2\zeta_\lambda(_2 u _\lambda - 1) &\leq\ _1\zeta _\lambda(_1 u _\lambda - 1), & \label{eq:ineq_zeta_ulam_min_1_paper}
\end{align}
in case we have:
\begin{enumerate}[label=(\roman*),topsep=0pt]
\item \label{enum:Keven} $K$ is even, $(K_1,d_1)=(K,d)$ and $(K_2,d_2)=(K,d+1)$ or
\item \label{enum:Kodd} $K$ is odd, $(K_1,d_1)=(K+1,d+1)$ and $(K_2,d_2)=(K,d)$.
\end{enumerate}
If (\ref{eq:ineq_zeta_prime_paper}-\ref{eq:ineq_zeta_ulam_min_1_paper}) hold, we find that:
\begin{align*}
_2\zeta_\lambda(_2u_\lambda-\lambda^b)
&=\ _2\zeta_\lambda( _2u_\lambda-1)+\int_{_2 u_\lambda-1}^{_2 u_\lambda-\lambda^b} \ _2 \zeta_\lambda'(x)\, dx\\
&\leq\ _1 \zeta_\lambda(_1u_\lambda-1)+\int_{_2u_\lambda-1}^{_2u_\lambda-\lambda^b} \ _1 \zeta_\lambda'(x+\ _1u_{\lambda} -\ _2u_{\lambda})\, dx
=\ _1 \zeta_\lambda(_1 u_{\lambda} - \lambda^b)
\end{align*}
This shows that if $\ _1 \zeta_\lambda(\ _1 u_{\lambda} - \lambda^b) \leq 0$, then also $_2 \zeta_\lambda(_2u_\lambda-\lambda^b) \leq 0$. Applying \ref{enum:Keven} then concludes the proof for $K$ even as we already established the result for $K=d-1$. Having shown the result for $K$
even then implies that the result also holds for $K$ odd by applying \ref{enum:Kodd}.
\end{proof}

It follows that assumption \ref{item:h_decreasing} indeed holds, therefore we may apply Corollary \ref{cor:gen_result} to show \eqref{eq:lim_LLdK}.

\section{LL($d_1,\dots, d_n, p_1,\dots, p_n$)} \label{sec:l1ld}
In this section, we consider a policy which, with probability $p_i$, sends an incoming job to the queue with the least amount of work left out of $d_i$ randomly selected servers. Throughout, we assume that $\sum_{i}p_i=1$, $d_i\geq 1$ and $\sum_i p_i d_i > 1$. We assume jobs are exponentially distributed with mean one and the arrival rate is equal to $\lambda \in (0,1)$. Let $\bar F(w)$ denote the probability that a queue in the mean field limit has $w$ or more work. We find that $\bar F(w$) can be found as the solution of a simple ODE. The proof of Proposition \ref{prop:LLd1dn_F} combines the main idea of the proofs of Theorem 4.1 and Theorem 5.1 in \cite{hellemans2018power} and the result of Theorem 5.2 in \cite{hellemans2019workload}.
\begin{proposition}\label{prop:LLd1dn_F}
The ccdf of the workload distribution for LL($d_1,\dots,d_n,p_1,\dots,p_n$) satisfies ODE \eqref{eq:ODE} with $T_\lambda(u)$ defined as in \eqref{eq:Fbar_LLd1dn}. 
\end{proposition}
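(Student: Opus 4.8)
The plan is to derive the integro-differential equation (or ODE) satisfied by $\bar F(w)$ directly from the dynamics of the queue at the cavity, and then recognize that it coincides with \eqref{eq:ODE} for the stated $T_\lambda(u)=\sum_i p_i u^{d_i}$. Since the policy is a convex combination of LL($d_i$) routing decisions, by the independence ansatz (established in \cite{shneer2020large} for convex combinations of LL($d,K$)), the stationary workload of a tagged queue evolves as an M/GI/1-type queue whose arrival stream is state-dependent: a job that would, under sampling, be routed to our tagged queue arrives only if, among the $d_i-1$ other sampled queues, all have at least as much work as the tagged one. First I would write the balance/rate equation for the tagged queue: the rate at which work of level exactly $w$ receives a new arrival is, conditioning on which of the $n$ sampling sizes is used, $\lambda \sum_i p_i \bar F(w)^{d_i-1}$ evaluated against the density of the tagged workload at $w$ — this is exactly the structure that in \cite{hellemans2018power} (Theorems 4.1 and 5.1) and \cite{hellemans2019workload} (Theorem 5.2) yields the ODE after differentiating the stationary equation.

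Concretely, I would follow the template of those earlier proofs: let $\bar F(w)=\P[\text{tagged workload}\ge w]$ in the mean field limit, and argue that $\bar F$ satisfies the fixed-point relation
$$
\bar F(w) = \lambda e^{-w} + \int_0^w \Big(\sum_i p_i \bar F(u)^{d_i}\Big) e^{u-w}\,du,
$$
by the same renewal-reward / level-crossing argument used for LL($d$), with the single power $u^d$ replaced by the mixture $\sum_i p_i u^{d_i}$ because each arriving job independently picks sampling size $d_i$ with probability $p_i$ and the "join-our-queue-if-least-loaded" event has probability $\bar F(u)^{d_i-1}$ given the tagged level, while the arrival rate is $\lambda$. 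Differentiating this identity in $w$ gives $\bar F'(w)=-\bar F(w)+\sum_i p_i \bar F(w)^{d_i}=T_\lambda(\bar F(w))-\bar F(w)$, i.e.~\eqref{eq:ODE}, and evaluating at $w=0$ gives $\bar F(0)=\lambda$. The boundary value $\lambda$ comes from the fact that the fraction of busy servers is $\lambda$ (work conservation / Little's law for the aggregate system), exactly as in the LL($d$) case.

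The main obstacle I expect is not the algebra but justifying that the mean field limit for this mixed policy is genuinely described by a single-queue cavity process with the claimed state-dependent arrival rate — that is, transferring the independence ansatz and the characterization of the cavity dynamics from the pure LL($d$) / LL($d,K$) settings to the convex combination. The paper has already flagged that \cite{shneer2020large} covers "any convex combination of LL($d,K$)", and LL($d_i$) is the special case $K=1$, so LL($d_1,\dots,d_n,p_1,\dots,p_n$) — which can be realized as a single dispatcher that flips a $p$-coin per job — falls inside that framework; I would cite this explicitly and then only need to check that the cavity-queue arrival rate for work above level $w$ is indeed $\sum_i p_i \bar F(w)^{d_i}\cdot(\text{rate }\lambda)$, which is the probabilistic heart of the matter and follows by conditioning on the coin flip and using the assumed asymptotic independence of the $d_i$ sampled queues. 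Once that is in place, the remaining steps (the integration-by-parts identity, differentiation, and the boundary condition) are routine and parallel the cited theorems, so I would present them briefly rather than in full detail.
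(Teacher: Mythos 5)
Your overall architecture is the same as the paper's: reduce to the queue at the cavity via the independence ansatz of \cite{shneer2020large}, write an integral (fixed-point) equation for $\bar F$, and differentiate to obtain \eqref{eq:ODE} with the mixture $T_\lambda$. The paper implements this by invoking Theorem 5.2 of \cite{hellemans2019workload}, which gives $\bar F'(w)=-\lambda\sum_i p_i d_i\,\P\{U\le w,\ \mathcal{Q}_i(U)>w\}$, and then evaluating these probabilities explicitly; your level-crossing formulation is the same computation in different clothing.

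The genuine gap in your sketch is the atom of the workload distribution at zero. Your central probabilistic claim --- that, given the tagged level $u$, an arriving job that sampled the tagged queue joins it with probability $\bar F(u)^{d_i-1}$ --- is valid only for $u>0$, where ties among sampled servers occur with probability zero. When the tagged server is idle (an event of probability $1-\lambda$), several of the $d_i$ sampled servers can be idle simultaneously and the job is assigned to one of them uniformly at random; the correct joining probability carries the factors $1/(j+1)$, and the paper devotes a separate computation to $\P\{U=0,\ \mathcal{Q}_i(U)>w\}$, which evaluates to $e^{-w}(1-\lambda^{d_i})/d_i$ via the identity $\sum_{j=0}^{d_i-1}\binom{d_i-1}{j}(1-\lambda)^j\lambda^{d_i-1-j}/(j+1)=(1-\lambda^{d_i})/(d_i(1-\lambda))$. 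This boundary term is precisely what upgrades the continuous-part contribution $\lambda^{d_i}e^{-w}$ to $e^{-w}$ and hence produces the $\lambda e^{-w}$ in the fixed-point relation you assert; if you use $\bar F(0)^{d_i-1}$ naively at the atom, the renewal/level-crossing argument does not close and yields the wrong constant in front of $e^{-w}$. A secondary point: the integrand in your fixed-point equation should be $\lambda\sum_i p_i\bar F(u)^{d_i}$, so that the ODE reads $\bar F'(w)=\lambda\sum_i p_i\bar F(w)^{d_i}-\bar F(w)$, consistent with $T_\lambda(u)=\lambda u^d$ for LL($d$), with \eqref{eq:proof_explicit1}, and with the derivative computation in Section \ref{sec:computation}; the displayed formula \eqref{eq:Fbar_LLd1dn} appears to drop this factor $\lambda$, and you have inherited that slip.
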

\begin{proof}
The proof can be found in Appendix \ref{app:LLd1dn_F}.
\end{proof}

In Section \ref{sec:LLd1..dn} we show that one may apply Corollary \ref{cor:gen_result}. In Section \ref{sec:impact_di_pi} we answer the question whether a certain choice of $p_i$ and $d_i$ is better than another, moreover we show which choice of $p_i$ and $d_i$ is optimal given a fixed number of probes which are allowed per arrival (that is $\sum_i p_i d_i$ fixed). Lastly in Section \ref{sec:LLdp} we show that when some jobs use the LL($d$) policy while other jobs are assigned arbitrarily, the ODE can be solved explicitly and we use this result to give an alternative proof of our limit result.

\subsection{The Limit Result}\label{sec:LLd1..dn}
We first show that the requirements to apply Theorem \ref{thm:gen_result_ODE} (and therefore also Corollary \ref{cor:gen_result}) are satisfied with $\bar \lambda = 0$ in assumpton \ref{item:existulam}, $b=0$ in assumption \ref{item:h_decreasing} and $\bar w = 0$ in assumption \ref{item:ODE_only_new_req}.
\begin{lemma} \label{lem:requirements_SQd1dn}
For any $\lambda \in (0,1)$ the equation $u=T_\lambda(u)$ with $T_\lambda(u)=\sum_i p_i u^{d_i}$ as in Proposition \ref{prop:LLd1dn_F} has exactly one solution $u_{\lambda}$ on $(1,\infty)$ moreover this solution satisfies:
\begin{enumerate}[label=(\alph*),leftmargin=*]
\item $\lim_{\lambda \rightarrow 1^-} u_{\lambda} = 1$, in particular assumption \ref{item:existulam} holds. \label{enum:d1dn_1}
\item $T_\lambda(u) < u$, $(T_\lambda(u)/u)' \geq 0$ and
$\lim_{\lambda \rightarrow 1^-} T_\lambda(u)/u \leq \lambda  < 1$ in particular assumption \ref{item:Tu_smaller_u} holds.
\item The function $\xi_i(x)=\frac{u_\lambda^{d_i} - (u_\lambda - x)^{d_i}}{x}$ is decreasing on $[u_\lambda-1,u_\lambda]$. In particular assumption \ref{item:h_decreasing} holds with $b=0$
as $h_\lambda(x)=\lambda \sum_i p_i \xi_i(x)$. Moreover assumption \ref{item:ODE_only_new_req} holds with $\bar w=0$. \label{enum:d1dn_7}
\item We have $\lim_{\varepsilon \rightarrow 0^+} \lim_{\lambda \rightarrow 1^-} h_\lambda(\varepsilon)= \sum_{i=1}^n p_i d_i$, in particular assumption \ref{item:lim_heps} holds. \label{enum:lim_heps_d1dn}
\end{enumerate}
\end{lemma}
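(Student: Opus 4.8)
The plan is to verify the four items of the lemma in order; this is considerably more elementary than the corresponding work for LL($d,K$), because here $T_\lambda$ is a sum of monomials $\lambda p_i u^{d_i}$ rather than a sum of terms involving $(1-u)^j$, so the delicate Assumption~\ref{item:h_decreasing} becomes easy.

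For item~\ref{enum:d1dn_1} I would rewrite the fixed-point equation $T_\lambda(u)=u$ as $\phi(u):=\sum_i p_i u^{d_i-1}=1/\lambda$. The hypothesis $\sum_i p_i d_i>1=\sum_i p_i$ forces $d_i\geq 2$ for at least one index with $p_i>0$, so $\phi$ is strictly increasing on $[1,\infty)$ with $\phi(1)=1$ and $\phi(u)\to\infty$; hence for every $\lambda\in(0,1)$ there is a unique $u_\lambda\in(1,\infty)$ with $\phi(u_\lambda)=1/\lambda$. Continuity of $\lambda\mapsto u_\lambda$ and $u_\lambda\to 1$ as $\lambda\to 1^-$ then follow from continuity of $\phi^{-1}$ (equivalently, from the implicit function theorem: $g(u):=T_\lambda(u)-u$ is strictly convex with $g(1)=\lambda-1<0$, so its unique root in $(1,\infty)$ satisfies $g'(u_\lambda)>0$). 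This is Assumption~\ref{item:existulam} with $\bar\lambda=0$. The second item is immediate: for $u\in(0,1]$ and $d_i\geq 1$ one has $u^{d_i-1}\leq 1$, so $T_\lambda(u)/u=\lambda\sum_i p_i u^{d_i-1}\leq\lambda<1$, while $T_\lambda(0)=0$ and $(T_\lambda(u)/u)'=\lambda\sum_i p_i(d_i-1)u^{d_i-2}\geq 0$; this gives Assumption~\ref{item:Tu_smaller_u}.

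For item~\ref{enum:d1dn_7}, using $u_\lambda=T_\lambda(u_\lambda)$ one checks directly that $h_\lambda(x)=\lambda\sum_i p_i\xi_i(x)$ with $\xi_i(x)=\big(u_\lambda^{d_i}-(u_\lambda-x)^{d_i}\big)/x$, so it suffices to show each $\xi_i$ is non-increasing on $[u_\lambda-1,u_\lambda]$. Writing $d=d_i$, $c=u_\lambda$, and $N(x):=x^2\xi_i'(x)=dx(c-x)^{d-1}+(c-x)^d-c^d$, I would observe that $N(0)=0$ and $N'(x)=-d(d-1)x(c-x)^{d-2}\leq 0$ on $[0,c]$, whence $N(x)\leq 0$ and $\xi_i'(x)\leq 0$ there (the case $d=1$ is trivial since then $\xi_i\equiv 1$). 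Consequently $h_\lambda$ is decreasing, so Assumption~\ref{item:h_decreasing} holds with $b=0$; and since $\bar F(0)=\lambda\leq 1=\lambda^0$ we get $\bar w_\lambda=0$, so Assumption~\ref{item:ODE_only_new_req} holds with $\bar w=0$.

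Finally, for item~\ref{enum:lim_heps_d1dn}: for fixed $\varepsilon\in(0,1)$ we have $\lim_{\lambda\to 1^-}h_\lambda(\varepsilon)=\sum_i p_i\big(1-(1-\varepsilon)^{d_i}\big)/\varepsilon$ since $u_\lambda\to 1$, and letting $\varepsilon\to 0^+$ each summand tends to $d_i$, giving $\sum_i p_i d_i$, which is exactly the value $A$ found in Section~\ref{sec:computation}; this is Assumption~\ref{item:lim_heps}. I do not anticipate a genuine obstacle here: the only points requiring a little care are the strict monotonicity (equivalently, strict convexity) argument behind uniqueness and continuity of $u_\lambda$, and the sign analysis of $N(x)$, which is the same ``differentiate $x^2 h_\lambda'(x)$'' device already used for LL($d$) in the main text.
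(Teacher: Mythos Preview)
Your proof is correct and follows essentially the same route as the paper's: the same sign analysis of $x^2\xi_i'(x)$ for part~(c), and the same direct limit computations for (b) and (d). Two small differences are worth noting: for uniqueness in (a) you divide through by $u$ and invoke strict monotonicity of $\phi(u)=\sum_i p_i u^{d_i-1}$, whereas the paper works with $r(u)=T_\lambda(u)-u$ and shows $r'(u)>0$ for $u>u_\lambda$; and for (c) you evaluate $N(x)=x^2\xi_i'(x)$ at $x=0$ (trivially zero, exactly as in the main-text LL($d$) argument), while the paper's appendix evaluates at $x=u_\lambda-1$, which requires the extra observation $1+(u_\lambda-1)d_i\leq u_\lambda^{d_i}$. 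Your choices are slightly cleaner but do not change the substance.
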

\begin{proof}
See Appendix \ref{app:lem:requirements_SQd1dn}.
\end{proof}

Let $W_\lambda^{(i)}$ and $R_\lambda^{(i)}$ denote the waiting and response time for a job which is assigned to the server with the Least Loaded queue amongst $d_i$ randomly selected servers and $W_\lambda=\sum_i p_i W_\lambda^{(i)}$ and $R_\lambda=\sum_i p_i R_\lambda^{(i)}$ the waiting and response time for $LL(d_1,\dots,d_n,p_1,\dots,p_n)$. We obtain the following limits:
\begin{theorem} \label{thm:LLd1dn}
We have for all $i$:
\begin{align*}
\lim_{\lambda\rightarrow 1^-} - \frac{\E[R_\lambda^{(i)}]}{\log(1-\lambda)}
= \lim_{\lambda\rightarrow 1^-} - \frac{\E[R_\lambda]}{\log(1-\lambda)}
= \lim_{\lambda\rightarrow 1^-} - \frac{\E[W_\lambda^{(i)}]}{\log(1-\lambda)}
= \lim_{\lambda\rightarrow 1^-} - \frac{\E[W_\lambda]}{\log(1-\lambda)}
= \frac{1}{\sum_{i=1}^n p_i d_i - 1}.
\end{align*}
\end{theorem}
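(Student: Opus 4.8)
The plan is to reduce the whole statement to Corollary~\ref{cor:gen_result} plus one short concentration argument for the per-substream quantities. First I would invoke Lemma~\ref{lem:requirements_SQd1dn}: it shows that $\bar F$, the ccdf solving the ODE of Proposition~\ref{prop:LLd1dn_F}, satisfies all of Assumptions~\ref{item:existulam}--\ref{item:lim_heps} (with $\bar\lambda=0$, $b=0$, $\bar w=0$), so Theorem~\ref{thm:gen_result_ODE} and hence Corollary~\ref{cor:gen_result} apply; and the computations in Section~\ref{sec:computation} identify the constants as $A=\sum_i p_i d_i$ and $B=1$. Writing $c:=\tfrac{B}{A-1}=\tfrac{1}{\sum_i p_i d_i-1}$, Corollary~\ref{cor:gen_result} directly gives
\begin{equation*}
\lim_{\lambda\to 1^-}-\frac{\E[W_\lambda]}{\log(1-\lambda)}=\lim_{\lambda\to 1^-}-\frac{\E[R_\lambda]}{\log(1-\lambda)}=\lim_{\lambda\to 1^-}-\frac{\E[L_\lambda]}{\log(1-\lambda)}=c,
\end{equation*}
where $L_\lambda$ is the cavity workload, i.e. $\E[L_\lambda]=\int_0^\infty\bar F(w)\,dw$. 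This already settles the two aggregate limits; it remains to handle $W_\lambda^{(i)}$ and $R_\lambda^{(i)}$.

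Next I would record the cavity description of a job routed through the $d_i$-probe substream: such a job joins the least loaded of $d_i$ independent cavity queues, each with workload ccdf $\bar F$, so $\P(W_\lambda^{(i)}>w)=\bar F(w)^{d_i}$ and $\E[W_\lambda^{(i)}]=\int_0^\infty\bar F(w)^{d_i}\,dw$ (consistent with $\E[W_\lambda]=\sum_i p_i\E[W_\lambda^{(i)}]=\int_0^\infty \tfrac{T_\lambda(\bar F(w))}{\lambda}\,dw$, the identity used in the proof of Corollary~\ref{cor:gen_result}). Since $0\le\bar F\le1$ and $d_i\ge1$ we have $\bar F(w)^{d_i}\le\bar F(w)$, hence $\E[W_\lambda^{(i)}]\le\E[L_\lambda]$; dividing by $-\log(1-\lambda)>0$ for $\lambda$ close to $1$ and using the first display,
\begin{equation*}
\limsup_{\lambda\to 1^-}\Bigl(-\frac{\E[W_\lambda^{(i)}]}{\log(1-\lambda)}\Bigr)\le c\qquad\text{for every }i.
\end{equation*}

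For the matching lower bound I would argue by contradiction, exploiting that $\E[W_\lambda]=\sum_i p_i\E[W_\lambda^{(i)}]$ is a convex combination whose limit ratio is already known. Put $a_i(\lambda):=-\E[W_\lambda^{(i)}]/\log(1-\lambda)\ge0$ (assume w.l.o.g. $p_i>0$). Then $\sum_i p_i a_i(\lambda)\to c$, each $a_i$ is bounded near $\lambda=1$ (by $-\E[L_\lambda]/\log(1-\lambda)$), and $\limsup a_i\le c$. If $\liminf_{\lambda\to1^-}a_{i_0}(\lambda)<c-\delta$ for some $i_0$ and $\delta>0$, pick $\lambda_k\to1^-$ with $a_{i_0}(\lambda_k)\le c-\delta$ and, passing to a subsequence (Bolzano--Weierstrass), $a_j(\lambda_k)\to\ell_j\in[0,c]$ for all $j$; then $\sum_j p_j\ell_j\le c-p_{i_0}\delta<c$, contradicting $\sum_j p_j a_j(\lambda_k)\to c$. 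Hence $\liminf a_i\ge c$, so $\lim_{\lambda\to1^-}a_i(\lambda)=c$ for every $i$. Finally $\E[R_\lambda^{(i)}]=\E[W_\lambda^{(i)}]+1$ (service is exponential with mean one) and $-1/\log(1-\lambda)\to0$, so the $R_\lambda^{(i)}$ limits coincide with the $W_\lambda^{(i)}$ limits, which completes the proof.

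Since the substantive work---verifying Assumptions~\ref{item:existulam}--\ref{item:lim_heps} (Lemma~\ref{lem:requirements_SQd1dn}) and Theorem~\ref{thm:gen_result_ODE}---is already done, the only genuinely delicate point is the last step: knowing the limit of the \emph{average} $-\E[W_\lambda]/\log(1-\lambda)$ does not by itself pin down the individual $-\E[W_\lambda^{(i)}]/\log(1-\lambda)$, and the bound $\bar F^{d_i}\le\bar F$ only controls them from above. The compactness argument above closes this gap; alternatively one can reach into the proof of Theorem~\ref{thm:gen_result_ODE} and use that $\bar F(w)\ge1-\varepsilon$ for $w\le w_{\varepsilon,\lambda}$ together with $\tfrac{B}{A-1}\le\lim_{\lambda\to1^-}\bigl(-w_{\varepsilon,\lambda}/\log(1-\lambda)\bigr)$ to get $\E[W_\lambda^{(i)}]\ge(1-\varepsilon)^{d_i}w_{\varepsilon,\lambda}$ and then let $\varepsilon\to0^+$, which also covers any substream with $p_i=0$.
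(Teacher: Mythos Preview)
Your proof is correct and follows essentially the same route as the paper: invoke Corollary~\ref{cor:gen_result} for the aggregate limit, bound $\E[W_\lambda^{(i)}]=\int_0^\infty \bar F(w)^{d_i}\,dw\le\int_0^\infty\bar F(w)\,dw$ to get $\limsup\le c$, and then use the convex-combination identity $\E[W_\lambda]=\sum_i p_i\E[W_\lambda^{(i)}]$ to force each term to the common limit. Your Bolzano--Weierstrass step is a more careful justification of what the paper writes simply as ``$\sum_i p_i \lim_{\lambda\to1^-}\bigl(-\E[W_\lambda^{(i)}]/\log(1-\lambda)\bigr)=c$ and $\sum_i p_i=1$'', and your remark about reaching into the proof of Theorem~\ref{thm:gen_result_ODE} to cover substreams with $p_i=0$ is a nice addition that the paper does not address.
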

\begin{proof}
The last equality follows from Corollary \ref{cor:gen_result} as we verified all assumptions in Lemma \ref{lem:requirements_SQd1dn} and Section \ref{sec:computation}.
Moreover we have for any $i$:
$$
\E[W_\lambda^{(i)}]
=
\int_0^\infty \bar F(w)^{d_i} \, dw
\leq \int_0^\infty \bar F(w) \, dw.
$$
Therefore we have in the limit $\lim_{\lambda\rightarrow 1^-} - \frac{\E[W_\lambda^{(i)}]}{\log(1-\lambda)} \leq \frac{1}{\sum_i p_i d_i - 1}$ and
$$
\frac{1}{\sum_i p_i d_i - 1}
=\lim_{\lambda\rightarrow 1^-} -\frac{\E[W_\lambda]}{\log(1-\lambda)}
=
\lim_{\lambda\rightarrow 1^-} -\frac{\sum_i p_i \E[W_\lambda^{(i)}]}{\log(1-\lambda)}
=
\sum_i p_i \lim_{\lambda\rightarrow 1^-} -\frac{\E[W_\lambda^{(i)}]}{\log(1-\lambda)}.
$$
This concludes the proof as $\sum_i p_i = 1$ and $p_i \geq 0$ for all $i$.
\end{proof}

\subsection{The impact of $d_i$ and $p_i$} \label{sec:impact_di_pi}
Let $p_1,\dots,p_n$ and $d_1,\dots,d_n$ be arbitrary. As the function $\varphi_u(x)=u^x$ is a convex function for any $u\in [0,1]$, we find that for all $u \in [0,1]$ we have $u^{\sum_{i=1}^n p_i d_i} \leq \sum_{i=1}^n p_i u^{d_i}$. From \eqref{eq:Fbar_LLd1dn} it therefore follows that for any arrival rate $\lambda \in (0,1)$ and fixed $\sum_{i=1}^n p_i d_i \in \mathbb{N}$ the optimal policy is LL($\sum_{i=1}^n p_i d_i$). On the other hand it follows from Theorem \ref{thm:LLd1dn} that as $\lambda$ tends to one, the choice of $p_i$ and $d_i$ does not affect the mean waiting time. In this section we take a closer look at which choices of $p_i$ and $d_i$ yield lower waiting times.

More specifically, one may wonder whether LL($a_1,\dots,a_n,p_1,\dots,p_n$) outperforms another policy\newline LL($b_1,\dots,b_n,q_1,\dots,q_n$). Moreover, given a maximal amount of average choice $\sum_{i=1}^n p_i d_i$ on job arrival, what is the optimal choice of $p_i \in [0,1]$ and $d_i \in \mathbb{N}$? To answer these questions we introduce the concept of Majorization with weights which is presented in \cite{marshall1979inequalities} (Chapter IV, Section 14 A). Specifically the following result is shown (originally introduced in \cite{blackwell1951range}, but a more comprehensive proof can be found in \cite{borcea2007equilibrium}):
\begin{proposition}\label{prop:majorization}
Let $p=(p_1,\dots,p_n)$ and $q=(q_1,\dots,q_m)$ be fixed vectors with nonnegative components such that $\sum_{i=1}^n p_i = \sum_{i=1}^m q_i = 1$. For $x \in \R^n, y \in \R^m$ the following are equivalent
\begin{enumerate}
\item For all convex functions $\varphi:\R\rightarrow \R$ we have $\sum_{i=1}^n p_i \varphi(x_i) \leq \sum_{i=1}^m q_i \varphi(y_i)$.
\item There exists an $m \times n$ matrix $A = (a_{ij})$ which satisfies $a_{ij} \geq 0, eA=e$ (with $e=(1,\dots,1)$), $A p^T = q^T$ (with $p^T$ the transpose of $p$) and $x=yA$. \label{enum:major_2}
\end{enumerate}
\end{proposition}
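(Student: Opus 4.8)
The plan is to prove the two implications separately. The direction $(2)\Rightarrow(1)$ is the easy one: given a matrix $A$ as in $(2)$, each column $(a_{1j},\dots,a_{mj})$ is a probability vector, since $a_{ij}\ge 0$ and $eA=e$ forces the column sums to equal $1$; hence $x_j=(yA)_j=\sum_{i=1}^m y_i a_{ij}$ is a convex combination of $y_1,\dots,y_m$. For any convex $\varphi$, Jensen's inequality gives $\varphi(x_j)\le \sum_i a_{ij}\varphi(y_i)$, and multiplying by $p_j\ge 0$, summing over $j$, and using $Ap^T=q^T$ (i.e.\ $\sum_j a_{ij}p_j=q_i$) yields $\sum_j p_j\varphi(x_j)\le \sum_i q_i\varphi(y_i)$.

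For the substantive direction $(1)\Rightarrow(2)$ I would use linear programming duality. We may assume all $p_j>0$ (zero-weight coordinates do not affect $(1)$). It is convenient to pass to the ``coupling'' variables $\pi_{ji}=p_j a_{ij}$; condition $(2)$ is then equivalent to solvability of the linear system
\[
\pi_{ji}\ge 0,\qquad \textstyle\sum_{i}\pi_{ji}=p_j,\qquad \sum_{j}\pi_{ji}=q_i,\qquad \sum_{i}y_i\,\pi_{ji}=p_j x_j,
\]
from which one recovers $A$ via $a_{ij}=\pi_{ji}/p_j$: then $a_{ij}\ge 0$, the second family of equations is $eA=e$, the third is $Ap^T=q^T$, and the last is $yA=x$. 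By Farkas' lemma the system is \emph{infeasible} only if there exist multipliers $\alpha\in\R^n$, $\beta\in\R^m$, $\gamma\in\R^n$ (dual to the three families of equalities) such that $\alpha_j+\beta_i+\gamma_j y_i\le 0$ for all $i,j$ while $\sum_j p_j(\alpha_j+\gamma_j x_j)+\sum_i q_i\beta_i>0$.

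The crux is to read a convex function off this dual certificate: the inequalities $\alpha_j+\beta_i+\gamma_j y_i\le 0$ force $\beta_i\le -\psi(y_i)$ for every $i$, where $\psi(t):=\max_{1\le j\le n}(\alpha_j+\gamma_j t)$ is convex on $\R$ as a maximum of affine functions, and trivially $\alpha_j+\gamma_j x_j\le \psi(x_j)$. Substituting these bounds and using $p_j,q_i\ge 0$ turns the second inequality into $\sum_j p_j\psi(x_j)>\sum_i q_i\psi(y_i)$, contradicting $(1)$ applied to $\varphi=\psi$; hence the system is feasible and produces the required $A$. (Equivalently, one can argue by separation: the set $\{yA:\ a_{ij}\ge0,\ eA=e,\ Ap^T=q^T\}\subseteq\R^n$ is compact, convex, and nonempty — $a_{ij}=q_i$ belongs to it — and any functional separating $x$ from it yields, via the supporting linear program, a convex $\psi$ violating $(1)$.) I expect the main obstacle to be purely organizational: setting up the primal--dual pair correctly and, above all, recognizing that the dual multipliers assemble into the piecewise-affine convex function $\psi=\max_j(\alpha_j+\gamma_j\,\cdot)$; once that is in place, the contradiction with $(1)$ is immediate and the rest is bookkeeping.
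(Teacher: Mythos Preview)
Your argument is correct and self-contained, whereas the paper does not prove Proposition~\ref{prop:majorization} at all: it simply quotes it as a classical result, citing Blackwell~\cite{blackwell1951range} and the treatment in Marshall--Olkin~\cite{marshall1979inequalities} (and \cite{borcea2007equilibrium}) for a proof. So there is nothing to compare on the level of technique --- you supply what the paper outsources.

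On the substance: the $(2)\Rightarrow(1)$ direction via Jensen and the identity $Ap^T=q^T$ is exactly right. For $(1)\Rightarrow(2)$, your reformulation in the coupling variables $\pi_{ji}=p_j a_{ij}$ and the use of Farkas' lemma is the standard and cleanest route; the key step --- reading the dual certificate $(\alpha,\beta,\gamma)$ as the convex envelope $\psi(t)=\max_j(\alpha_j+\gamma_jt)$ --- is correctly identified and executed. This is essentially how the references the paper cites argue as well (Blackwell's original proof is a separation/duality argument of the same flavour).

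One small caveat worth recording: your reduction ``we may assume all $p_j>0$'' is not quite innocuous. Zero-weight coordinates indeed do not affect~(1), but they \emph{do} constrain~(2), since $x=yA$ together with $eA=e$ forces every $x_j$ (even those with $p_j=0$) to lie in the convex hull of $y_1,\dots,y_m$. Thus, as literally stated with $p_j\ge 0$, the equivalence can fail; the result is really about strictly positive weights (or, equivalently, one drops coordinates with $p_j=0$ from both sides). This is an imprecision in the proposition itself rather than in your proof, and your explicit hypothesis $p_j>0$ is the right fix.
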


As a consequence of Proposition \ref{prop:majorization} we say that $(p,a)$ is majorized by $(q,b)$ and write $(p,a) \preceq (q,b)$ if and only if (1) or (2) in Proposition \ref{prop:majorization} holds. The interpretation is that $(q,b)$ is more scattered than $(p,a)$.  This yields a method for comparing policies as $(p,a) \preceq (q,b)$ also implies that 
the workload distribution $\bar F(w)$ of LL($p,a$) is upper bounded by
the workload distribution of LL($q,b$).

Despite the fact that given a budget $\bar d = \sum_i p_i d_i$, the optimal policy is simply LL($\bar d$), we may have $\bar d \notin \mathbb{N}$. In this case we simply use LL(($p$,$1-p$), ($\lfloor \bar d \rfloor$, $\lceil \bar d \rceil$)) for an appropriate $p\in [0,1]$. We show that this is indeed the optimal choice (here $\lfloor \bar d \rfloor$ denotes the floor and $\lceil \bar d \rceil$ denotes the ceil of $\bar d$).
\begin{theorem}
Let $p=(p_1,\dots,p_n)$ with $\sum_{i=1}^n p_i=1, p_i \geq 0$ and $d=(d_1,\dots,d_n)$ with $d_i \in \mathbb{N}$. If we let $\bar d=\sum_{i=1}^n p_i d_i$ and $q=(q_1,q_2)$ s.t.~$q_1+q_2=1$ and $q_1 \lfloor \bar d \rfloor + q_2 \lceil \bar d \rceil = \bar d$ then $(q,(\lfloor \bar d \rfloor, \lceil \bar d \rceil)) \preceq (p,d)$.
\end{theorem}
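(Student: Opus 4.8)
The plan is to invoke Proposition \ref{prop:majorization} with the convex function $\varphi$ realized by the workload-distribution connection, but the cleanest route is purely combinatorial: I will exhibit the transfer matrix $A$ of Proposition \ref{prop:majorization}\ref{enum:major_2} directly, thereby establishing $(q,(\lfloor\bar d\rfloor,\lceil\bar d\rceil))\preceq(p,d)$ in the strong sense that $\sum_{i} q_i\varphi(y_i)\le\sum_i p_i\varphi(d_i)$ for \emph{every} convex $\varphi:\R\to\R$. Concretely, set $m=2$, $y=(\lfloor\bar d\rfloor,\lceil\bar d\rceil)$, $n$ the length of $p$, $x=d$, and look for a $2\times n$ matrix $A=(a_{ij})$ with $a_{ij}\ge 0$, column sums $1$ (i.e.\ $eA=e$), $Ap^{T}=q^{T}$, and $yA=x=d$.

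First I would handle the degenerate case $\bar d\in\mathbb N$: then $\lfloor\bar d\rfloor=\lceil\bar d\rceil=\bar d$, we may take $q=(1,0)$ (or any split), and the claim $u^{\bar d}\le\sum_i p_i u^{d_i}$ is just Jensen's inequality for $\varphi_u(x)=u^x$, already noted at the start of Section \ref{sec:impact_di_pi}; equivalently $A$ has all mass meeting the column constraints trivially. Assume henceforth $\bar d\notin\mathbb N$, write $\ell=\lfloor\bar d\rfloor$, so $\ell<\bar d<\ell+1$ and $q_2=\bar d-\ell\in(0,1)$, $q_1=1-q_2$. The natural choice is to let each coordinate $d_i$ be "split" between the two adjacent integers $\ell$ and $\ell+1$ in the unique affine way: since each $d_i\in\mathbb N$, for $d_i\le\ell$ we put all of $p_i$'s mass accordingly, for $d_i\ge\ell+1$ likewise, but in general $d_i$ need not be one of $\{\ell,\ell+1\}$, so a single $2\times n$ matrix with $yA=d$ forces $a_{1i}\ell+a_{2i}(\ell+1)=d_i$ with $a_{1i}+a_{2i}=1$, giving $a_{2i}=d_i-\ell$, $a_{1i}=\ell+1-d_i$. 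These are nonnegative \emph{iff} $\ell\le d_i\le\ell+1$, which fails whenever some $d_i$ is far from $\bar d$.

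Hence the main obstacle is precisely that a direct $2$-point transfer matrix does not exist in general, and the fix is to use transitivity of $\preceq$: I will first majorize $(p,d)$ by an intermediate $(p',d')$ supported on $\{\ell,\ell+1\}$ — i.e.\ repeatedly replace any pair of points $d_i<d_j$ straddling no integer issue by pushing mass toward $\ell$ and $\ell+1$ while preserving both total mass $1$ and the barycenter $\bar d$ — using that each such elementary "Robin Hood" / mean-preserving contraction toward the two extreme admissible integers is itself a $\preceq$ step (a $2\times 2$ doubly-stochastic-type transfer on the relevant coordinates), and that $\sum p_i d_i$ is invariant along the way. Since all $d_i$ are integers, after finitely many such moves every point lies in $\{\ell,\ell+1\}$ and the weights on them are forced to be $(q_1,q_2)$ by matching the barycenter $\bar d$. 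Transitivity of majorization (immediate from composing the transfer matrices of Proposition \ref{prop:majorization}\ref{enum:major_2}) then yields $(q,(\ell,\ell+1))\preceq(p,d)$.

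I would organize the write-up as: (i) reduce to $\bar d\notin\mathbb N$; (ii) state the elementary lemma that if $a<b$ are reals, $0<t<1$, and we move weight so as to replace the pair $(a,b)$ (with weights summing to $s$) by $(\ell,\ell+1)$-weights with the same partial mass and same partial barycenter — permissible as long as $\ell\le$ (current extreme points) $\le\ell+1$ eventually holds after iterating — each move is a $\preceq$-step; (iii) argue termination using integrality of the $d_i$ and that each move strictly reduces, say, $\sum_i p_i(d_i-\bar d)^2$ or the number of distinct support points outside $\{\ell,\ell+1\}$; (iv) identify the terminal configuration as $(q,(\ell,\ell+1))$ via $\sum=1$ and barycenter $=\bar d$; (v) conclude by transitivity. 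Via the interpretation recorded after Proposition \ref{prop:majorization}, this gives that the workload ccdf $\bar F$ of LL$(q,(\ell,\ell+1))$ lower-bounds that of LL$(p,d)$, i.e.\ the two-point policy is optimal for the given budget, which is the assertion of the theorem.
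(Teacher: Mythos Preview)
Your setup applies Proposition~\ref{prop:majorization} in the wrong direction. To establish $(q,(\lfloor\bar d\rfloor,\lceil\bar d\rceil))\preceq(p,d)$, the pair $(q,(\lfloor\bar d\rfloor,\lceil\bar d\rceil))$ must play the role of the proposition's $(p,x)$ and $(p,d)$ the role of $(q,y)$; hence the proposition's $n$ equals $2$, its $m$ equals your $n$, and $A$ is $n\times 2$, not $2\times n$. With your assignment $x=d$, $y=(\lfloor\bar d\rfloor,\lceil\bar d\rceil)$, condition~(1) reads $\sum_i p_i\varphi(d_i)\le q_1\varphi(\lfloor\bar d\rfloor)+q_2\varphi(\lceil\bar d\rceil)$, the \emph{opposite} of what you stated you want. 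The ``obstacle'' you found---that $yA=d$ forces $a_{2j}=d_j-\ell$, nonnegative only when $d_j\in\{\ell,\ell+1\}$---is therefore an artifact of this swap: that matrix would witness $(p,d)\preceq(q,(\ell,\ell+1))$, which is indeed false in general.

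With the correct orientation the direct construction succeeds, and this is exactly the paper's proof. The conditions on $A=(a_{jk})\in\R^{n\times 2}$ become: each column is a probability vector, $q_1 a_{j1}+q_2 a_{j2}=p_j$ for every $j$, and $\sum_j a_{jk}d_j$ equals $\lfloor\bar d\rfloor$ for $k=1$ and $\lceil\bar d\rceil$ for $k=2$. One chooses the second column to be any probability vector with $0\le a_{j2}\le p_j/q_2$ and $\sum_j a_{j2}d_j=\lceil\bar d\rceil$ (feasible: at $a_{j2}=p_j$ the sum is $\bar d<\lceil\bar d\rceil$, and shifting mass toward indices with large $d_j$ within the caps raises it past $\lceil\bar d\rceil$); the first column $a_{j1}=(p_j-q_2a_{j2})/q_1$ is then determined and automatically satisfies the remaining constraints. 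Your fallback via iterated mean-preserving contractions and transitivity can be made rigorous, but it is substantially more work than this one-step construction, and in your sketch the choice of pairs to pinch and the termination argument are not yet pinned down.
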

\begin{proof}
We show that Proposition \ref{prop:majorization}, \eqref{enum:major_2} holds, to this end we let $A= (a_{ij}) \in \R^{n,2}$. From $Aq^T = p$ it follows that for all $j$:
\begin{equation}\label{eq:proof_majorization}
a_{1j} = \frac{p_j - q_2 a_{2j}}{q_1}.
\end{equation}
It is not hard to see that one can indeed choose $0 \leq a_{2j} \leq \frac{p_j}{q_2}$ such  that $\sum_{j=1}^n a_{2j} d_j = \lceil \bar d \rceil$ and $\sum_j a_{2j} = 1$, it then automatically follows from \eqref{eq:proof_majorization} that also $\sum_j a_{1j} =1$. Moreover it follows that:
\begin{align*}
\sum_{j=1}^n a_{1j} d_j
&= \frac{\bar d}{q_1} - \frac{q_2}{q_1} \lceil \bar d \rceil = \lfloor \bar d \rfloor.
\end{align*}
This completes the proof.
\end{proof}

\subsection{LL($d,p$)}\label{sec:LLdp}
We take a closer look at the particular case where $n=2$ and $d_2=1$, we denote $p=p_1$ and thus $p_2=1-p$. We write LL($d,p$) as a shorthand for LL($d_1,d_2,p_1,p_2$). In practice this policy can be viewed as having two arrival streams : one at each server individually, at rate $\lambda (1-p)$ for which 
there is no load balancing and a second at rate $\lambda p N$ which is distributed using the LL($d$) load balancing policy. It turns out that (as for LL($d$) in \cite{hellemans2018power}), this policy has a closed form solution for the ccdf of the workload distribution:
\begin{proposition}
The equilibrium workload distribution for the LL($d,p$) policy with exponential job sizes of mean one is given by \eqref{eq:exp_LLdp_WLdist}.
%\begin{equation}\label
%\bar F(w)
%=
%\lambda \bigg[ \frac{1-(1-p)\lambda}{p \lambda^d +(1-(1-p)\lambda-p\lambda^d) e^{(d-1)(1-(1-p)%\lambda)w}} \bigg]^{\frac{1}{d-1}}
%\end{equation}
\end{proposition}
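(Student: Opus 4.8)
The plan is to solve the first-order ODE \eqref{eq:ODE} explicitly for the specific nonlinearity $T_\lambda(u)=p\lambda u^d+(1-p)\lambda u$, subject to $\bar F(0)=\lambda$, and check that the solution coincides with \eqref{eq:exp_LLdp_WLdist}. Writing $y=\bar F(w)$, the ODE becomes $y' = p\lambda y^d + ((1-p)\lambda - 1)\,y$, which is a Bernoulli equation. The standard substitution $v = y^{1-d}$ turns it into a linear first-order ODE in $v$: since $v' = (1-d)y^{-d}y' = (1-d)\big(p\lambda + ((1-p)\lambda-1)v\big)$, we get $v' - (1-d)((1-p)\lambda-1)v = (1-d)p\lambda$, a constant-coefficient linear ODE. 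Let me abbreviate $c := (d-1)(1-(1-p)\lambda)$, so the homogeneous rate is $-c$ and the equation reads $v' + cv = -(d-1)p\lambda$, with the understanding that $1-(1-p)\lambda>0$ since $\lambda<1$, so $c>0$.

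First I would write down the general solution $v(w) = \dfrac{-(d-1)p\lambda}{c} + C e^{-cw} = \dfrac{p\lambda}{1-(1-p)\lambda}\cdot(-1) \cdot \dfrac{d-1}{d-1} + Ce^{-cw}$; more cleanly, the particular (equilibrium) solution is $v_\infty = -(d-1)p\lambda/c = -p\lambda/(1-(1-p)\lambda)$. Wait — I should double-check the sign: we need $v=y^{1-d}>1$ for $y\in(0,1)$, so $v$ must be positive and increasing to $+\infty$ as $w\to\infty$ (since $y\to 0$); thus the coefficient of $e^{-cw}$ should make $v$ grow. Actually $v' + cv = -(d-1)p\lambda < 0$ would force $v$ to decrease toward a negative equilibrium, which is wrong, so I expect the correct reduction to yield $v' - cv = (d-1)p\lambda$ with a sign flip coming from $((1-p)\lambda-1)<0$ times $(1-d)<0$ being positive. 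Let me just recompute: $v' = (1-d)(p\lambda + ((1-p)\lambda-1)v)$, and $(1-d)((1-p)\lambda - 1) = (d-1)(1-(1-p)\lambda) = c > 0$, so $v' = c v + (1-d)p\lambda = cv - (d-1)p\lambda$. Hence $v(w) = \dfrac{(d-1)p\lambda}{c} + C e^{cw} = \dfrac{p\lambda}{1-(1-p)\lambda} + Ce^{cw}$, which indeed grows to $+\infty$ as $w\to\infty$ provided $C>0$; good.

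Next I would impose the boundary condition: $v(0) = \bar F(0)^{1-d} = \lambda^{1-d}$, so $C = \lambda^{1-d} - \dfrac{p\lambda}{1-(1-p)\lambda} = \dfrac{\lambda^{1-d}(1-(1-p)\lambda) - p\lambda}{1-(1-p)\lambda}$. Substituting back and inverting, $\bar F(w) = v(w)^{-1/(d-1)}$, I would simplify the bracket: multiply numerator and denominator inside by $\lambda^{d-1}$ to get
\begin{equation*}
v(w) = \frac{p\lambda + \big(\lambda^{1-d}(1-(1-p)\lambda) - p\lambda\big)e^{cw}}{1-(1-p)\lambda},
\end{equation*}
so that
\begin{equation*}
\bar F(w) = \left[\frac{1-(1-p)\lambda}{p\lambda + \big(\lambda^{1-d}(1-(1-p)\lambda) - p\lambda\big)e^{cw}}\right]^{\frac{1}{d-1}}.
\end{equation*}
Pulling the factor $\lambda^{1-d}$ out, i.e. writing $\lambda^{1-d}(1-(1-p)\lambda) - p\lambda = \lambda^{1-d}\big((1-(1-p)\lambda) - p\lambda^d\big)$ and then extracting $\lambda^{1-d}$ (equivalently multiplying numerator and denominator by $\lambda^{d-1}$), one recovers exactly the claimed closed form \eqref{eq:exp_LLdp_WLdist}, with the outer factor $\lambda$ appearing because $\lambda^{(d-1)\cdot\frac{1}{d-1}} = \lambda$. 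Finally I would remark that Proposition~\ref{prop:LLd1dn_F} guarantees this $\bar F$ is the relevant ccdf, and I would note that the formula also makes manifest the verifications needed elsewhere (positivity of the denominator for $\lambda\in(0,1)$, monotone decrease, $\bar F(0)=\lambda$, $\bar F(\infty)=0$).

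The main obstacle is purely bookkeeping: correctly tracking the sign of $(d-1)(1-(1-p)\lambda)$ through the Bernoulli substitution and then algebraically massaging the constant $C$ so that the expression collapses to the precise form \eqref{eq:exp_LLdp_WLdist} — in particular making the outer $\lambda$ and the exponent $\frac{1}{d-1}$ materialize from $\lambda^{1-d}$. There is no conceptual difficulty; uniqueness of the solution (needed so that this explicit $\bar F$ is \emph{the} workload ccdf) follows from standard ODE theory since the right-hand side of \eqref{eq:ODE} is smooth (in fact polynomial) in $\bar F$, hence locally Lipschitz.
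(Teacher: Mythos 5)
Your proof is correct and follows essentially the same route as the paper: both solve the autonomous ODE $\bar F' = p\lambda \bar F^d + ((1-p)\lambda-1)\bar F$ explicitly with $\bar F(0)=\lambda$, the paper by separating variables and integrating, you by the equivalent Bernoulli substitution $v=\bar F^{1-d}$, which is just the standard way to carry out that integration. Your algebra (the value of $C$ and the rescaling by $\lambda^{d-1}$ that produces the outer factor $\lambda$) checks out and recovers \eqref{eq:exp_LLdp_WLdist} exactly.
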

\begin{proof}
In this case, the ODE defined in \eqref{eq:Fbar_LLd1dn} reduces to:
\begin{equation}
\bar F'(w)= \lambda \left(p\bar F(w)^d + (1-p) \bar F(w) - \frac{\bar F(w)}{\lambda} \right). \label{eq:proof_explicit1}
\end{equation}
This is an autonomous ODE, we find that it can be solved explicitly simply by writing it as:
$$
\frac{d \bar F(w)}{p\bar F(w)^d + (1-p) \bar F(w) - \frac{\bar F(w)}{\lambda}} = \lambda\, dw,
$$
integrating and rewriting in function of $\bar F(w)$ yields \eqref{eq:exp_LLdp_WLdist}.
\end{proof}
From this we find:
\begin{proposition} \label{prop:WL_LLd}
The mean queue length for the LL($d,p$) policy with exponential job sizes of mean one is given by:
\begin{equation}\label{eq:WL_LLd}
\E[Q_\lambda]=\frac{\lambda}{1-(1-p)\lambda} \sum_{n=0}^\infty \frac{1}{1+n(d-1)} \left( \frac{p \lambda^d}{1-(1-p)\lambda}\right)^n.
\end{equation}
In particular for $d=2$ we have:
$$
\E[Q_\lambda] = - \frac{\log\left( 1 - \frac{p \lambda^2}{1-(1-p) \lambda} \right)}{p \lambda}.
$$
\end{proposition}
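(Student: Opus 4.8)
The plan is to read off the mean queue length directly from the closed-form expression \eqref{eq:exp_LLdp_WLdist} for $\bar F(w)$, using the already-established identity $\E[Q_\lambda] = \int_0^\infty \bar F(w)\,dw$, and then to evaluate the resulting integral by a change of variables followed by a binomial expansion.

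First I would record that $\E[Q_\lambda] = \int_0^\infty \bar F(w)\,dw$. This is precisely what is shown inside the proof of Corollary \ref{cor:gen_result}: for LL($d,p$) the function $T_\lambda(\bar F(w))/\lambda$ is the probability that an arriving job waits longer than $w$, so \eqref{eq:proof_resp1} gives $\E[L_\lambda]/\lambda = 1 + \E[W_\lambda] = \E[R_\lambda]$, and Little's law turns this into $\E[Q_\lambda] = \lambda\,\E[R_\lambda] = \E[L_\lambda] = \int_0^\infty \bar F(w)\,dw$. (Equivalently, one may integrate \eqref{eq:proof_explicit1} over $[0,\infty)$ directly.)

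Next, abbreviate $c := 1-(1-p)\lambda$ and $a := p\lambda^d$, so that $c-a = 1-(1-p)\lambda-p\lambda^d > 0$ for every $\lambda\in(0,1)$, because $(1-p)\lambda + p\lambda^d < (1-p)+p = 1$; in particular $0 \le a/c < 1$. With this notation \eqref{eq:exp_LLdp_WLdist} reads $\bar F(w) = \lambda\,[\,c/(a+(c-a)e^{(d-1)cw})\,]^{1/(d-1)}$. I would substitute $t = e^{-(d-1)cw}$, which maps $w\in(0,\infty)$ bijectively onto $t\in(0,1)$ with $dw = -\,dt/((d-1)c\,t)$ and $a+(c-a)e^{(d-1)cw} = (at+c-a)/t$. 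After the obvious cancellations (including the factor $c^{1/(d-1)}$) this gives
\[
\E[Q_\lambda] \;=\; \frac{\lambda}{(d-1)c}\int_0^1 t^{\frac{1}{d-1}-1}\Bigl(1-\tfrac{a}{c}(1-t)\Bigr)^{-\frac{1}{d-1}}\,dt .
\]
Now expand $\bigl(1-\tfrac{a}{c}(1-t)\bigr)^{-1/(d-1)} = \sum_{n\ge 0}\binom{\frac{1}{d-1}+n-1}{n}\bigl(\tfrac{a}{c}\bigr)^n(1-t)^n$, which is legitimate and integrable term by term since $\tfrac{a}{c}(1-t)\in[0,1)$ and all summands are nonnegative (monotone convergence). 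Each term yields the Beta integral $\int_0^1 t^{\frac{1}{d-1}-1}(1-t)^n\,dt = B(\tfrac{1}{d-1},n+1)$, and the elementary identity $\binom{\alpha+n-1}{n}B(\alpha,n+1) = \frac{1}{\alpha+n}$ with $\alpha=\frac{1}{d-1}$ collapses the summand to $\frac{1}{\frac{1}{d-1}+n}\bigl(\tfrac{a}{c}\bigr)^n$; since $\frac{1}{(d-1)}\cdot\frac{1}{\frac{1}{d-1}+n} = \frac{1}{1+n(d-1)}$, the prefactor is absorbed and one obtains exactly \eqref{eq:WL_LLd}.

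Finally, for $d=2$ the series becomes $\sum_{n\ge0}\frac{1}{n+1}\bigl(\tfrac{a}{c}\bigr)^n = -\frac{c}{a}\log\!\bigl(1-\tfrac{a}{c}\bigr)$, whence $\E[Q_\lambda] = -\frac{\lambda}{a}\log(1-a/c)$; inserting $a=p\lambda^2$ and $c=1-(1-p)\lambda$ gives the stated closed form. The only point requiring any care is the interchange of summation and integration, which is immediate here by nonnegativity together with $a/c<1$; everything else is a routine substitution and a standard Beta-function evaluation.
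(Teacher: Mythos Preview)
Your proof is correct. It differs from the paper's in a pleasantly elementary way: the paper substitutes $y=e^{-w}$, identifies the resulting integral as a Gauss hypergeometric function, applies the Pfaff transformation ${}_2F_1(a,b,c;z)=(1-z)^{-a}{}_2F_1(a,c-b,c;z/(z-1))$ to move the argument into $(0,1)$, and only then reads off the series expansion. By choosing instead $t=e^{-(d-1)cw}$ you land directly on an Euler-type integral $\int_0^1 t^{\alpha-1}\bigl(1-\tfrac{a}{c}(1-t)\bigr)^{-\alpha}\,dt$ with $\alpha=\tfrac{1}{d-1}$, so the Pfaff step is already baked into the substitution; the binomial expansion together with the Beta-integral identity $\binom{\alpha+n-1}{n}B(\alpha,n+1)=\tfrac{1}{\alpha+n}$ then replaces the hypergeometric machinery entirely. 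What you gain is self-containment (no special-function identities need be quoted), at the cost of redoing by hand a calculation that the ${}_2F_1$ formalism packages once and for all. The justification of the interchange via nonnegativity and $a/c<1$ is fine, and the $d=2$ specialization is exactly as you say.
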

\begin{proof}
The proof can be found in Appendix \ref{app:prop:WL_LLd}.
\end{proof}
The mean waiting time is now given by $\E[W_\lambda] = \frac{\E[Q_\lambda]}{\lambda} - 1$, using this all results involving mean queue length can easily be adapted to mean waiting time. We find a simple lower and upper bound for the mean queue length:
\begin{proposition}\label{prop:approxWL_LLd}
We have:
$$
\tilde Q_\lambda - \frac{\lambda^{d+1}}{p (d-1)^2(1-(1-p)\lambda)^2} \frac{\pi^2}{6} \leq \E[Q_\lambda] \leq \tilde Q_\lambda,
$$
with:
\begin{align}
\tilde{Q}_\lambda
&=
\frac{\lambda}{1-(1-p)\lambda} \cdot \left( 1 + \frac{1}{d-1} \log\left(\frac{1-(1-p)\lambda}{1-(1-p)\lambda-p\lambda^d}\right) \right). \label{eq:WLLdtilde}
\end{align}
\end{proposition}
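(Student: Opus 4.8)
The plan is to start from the exact series expression for $\E[Q_\lambda]$ in \eqref{eq:WL_LLd} and split off the $n=0$ term, which is exactly $\frac{\lambda}{1-(1-p)\lambda}$. For $n\geq 1$ I would compare $\frac{1}{1+n(d-1)}$ against the simple lower bound $\frac{1}{n(d-1)}$: since $1+n(d-1) > n(d-1)$ we get $\frac{1}{1+n(d-1)} < \frac{1}{n(d-1)}$, so replacing the coefficient by $\frac{1}{n(d-1)}$ and summing $\sum_{n\ge1}\frac{1}{n}x^n = -\log(1-x)$ with $x=\frac{p\lambda^d}{1-(1-p)\lambda}$ yields precisely the upper bound $\E[Q_\lambda]\le\tilde Q_\lambda$, matching \eqref{eq:WLLdtilde} term by term (one must check $x<1$, which follows from $1-(1-p)\lambda-p\lambda^d>0$, itself a consequence of $\lambda<1$ and $\bar F$ being a genuine ccdf, cf.~\eqref{eq:exp_LLdp_WLdist}).

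For the lower bound I would quantify the gap. Write $\frac{1}{n(d-1)} - \frac{1}{1+n(d-1)} = \frac{1}{n(d-1)\,(1+n(d-1))} \le \frac{1}{n(d-1)\cdot n(d-1)} = \frac{1}{(d-1)^2 n^2}$. Hence
\[
\tilde Q_\lambda - \E[Q_\lambda]
= \frac{\lambda}{1-(1-p)\lambda}\sum_{n=1}^\infty\left(\frac{1}{n(d-1)}-\frac{1}{1+n(d-1)}\right)x^n
\le \frac{\lambda}{1-(1-p)\lambda}\cdot\frac{1}{(d-1)^2}\sum_{n=1}^\infty\frac{x^n}{n^2}.
\]
Since $0\le x<1$ we have $\sum_{n\ge1} x^n/n^2 \le \sum_{n\ge1} 1/n^2 = \pi^2/6$. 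It then remains to absorb the prefactor: $\frac{\lambda}{1-(1-p)\lambda}\cdot x = \frac{\lambda p\lambda^d}{(1-(1-p)\lambda)^2} = \frac{p\lambda^{d+1}}{(1-(1-p)\lambda)^2}$, so bounding one factor of $x$ and keeping the rest gives the stated error term $\frac{\lambda^{d+1}}{p(d-1)^2(1-(1-p)\lambda)^2}\cdot\frac{\pi^2}{6}$ — note the $p$ in the denominator arises because we pulled out $\frac{\lambda}{1-(1-p)\lambda}\cdot x = \frac{p\lambda^{d+1}}{(1-(1-p)\lambda)^2}$ but the claimed bound has $\frac{\lambda^{d+1}}{p(\cdots)}$, so I would actually bound $\sum_{n\ge1}x^n/n^2 \le x\sum_{n\ge1}1/n^2$ only when convenient, and more carefully track constants so the final coefficient reads as in the statement. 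This bookkeeping is the only delicate point.

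I expect the main (minor) obstacle to be exactly this constant-chasing: matching the $1/p$ versus $p$ placement and the power $\lambda^{d+1}$ in the error term requires deciding how many factors of $x=\frac{p\lambda^d}{1-(1-p)\lambda}$ to extract from the series versus from the prefactor, and one should double-check the inequality $\frac{1}{1+n(d-1)}\ge \frac{1}{n(d-1)} - \frac{1}{(d-1)^2 n^2}$ holds for \emph{all} $n\ge 1$ and $d\ge 2$ (it does, since $(d-1)n(1+(d-1)n)\ge (d-1)^2n^2$). Everything else is a routine dominated-series comparison and the standard identities $\sum x^n/n=-\log(1-x)$ and $\sum 1/n^2=\pi^2/6$. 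No convergence subtleties arise because $x\in[0,1)$ strictly for $\lambda\in(0,1)$.
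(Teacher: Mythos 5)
Your argument is correct and is essentially identical to the paper's proof: the same split of the $n=0$ term, the same comparison of $\frac{1}{1+n(d-1)}$ with $\frac{1}{n(d-1)}$, and the same use of $\sum_{n\ge1} z^n/n=-\log(1-z)$ and $\sum_{n\ge1}1/n^2=\pi^2/6$. The one point you leave hanging --- the $p$ versus $1/p$ placement --- resolves immediately: extracting a single factor of $x=\frac{p\lambda^d}{1-(1-p)\lambda}$ yields the error term $\frac{p\lambda^{d+1}}{(d-1)^2(1-(1-p)\lambda)^2}\cdot\frac{\pi^2}{6}$, which is \emph{stronger} than the stated bound since $p\le 1\le 1/p$, so the proposition follows (the paper's own proof likewise establishes this sharper constant and simply states the weaker one).
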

\begin{proof}
Throughout this proof we denote $z = \frac{p \lambda^d}{1-(1-p)\lambda}$, where $z < 1$ for $\lambda < 1$.
We first note that as $\log(1/(1-z))=\sum_{n\geq 1} z^n/n$, $\tilde{Q}_\lambda$ can be written as
$$
\tilde{Q}_\lambda
=
\frac{\lambda}{1-(1-p)\lambda} \left(1+ \sum_{n=1}^\infty \frac{z^n}{n(d-1)} \right).
$$
From this it is obvious that $\E[Q_\lambda] \leq \tilde{Q}_\lambda$. Furthermore we find:
\begin{align*}
\tilde Q_\lambda - \E[Q_\lambda]
&=
\frac{\lambda}{(d-1)(1-(1-p)\lambda)} \sum_{n=1}^{\infty} \frac{z^n}{(1+n(d-1))n}\\
&\leq \frac{\lambda}{(d-1)^2(1-(1-p)\lambda)} \sum_{n=1}^\infty \frac{z^n}{n^2}
\leq \frac{\lambda z \pi^2}{6(d-1)^2(1-(1-p)\lambda)},
\end{align*}
as $\sum_{n \geq 1} 1/n^2 = \pi^2/6$.
This concludes the proof.
\end{proof}

Similar bounds for  LL($d$), i.e.~when $p=1$ were not presented in \cite{hellemans2018power}.
Using these bounds we obtain an alternative proof for the result in Theorem \ref{thm:LLd1dn}
for the special case where $n=2$ and $d_2 = 1$. Indeed, a simple application of l'Hopital's rule yields that:
$$
\lim_{\lambda \rightarrow 1^-} - \frac{\E[\tilde{W}_\lambda]}{\log(1-\lambda)} 
= \lim_{\lambda \rightarrow 1^-} - \frac{\E[\tilde{Q}_\lambda]}{\log(1-\lambda)} 
= \frac{1}{p(d-1)}.
$$

\section{Low Load Limit} \label{sec:low_traffic}
From the result $\lim_{\lambda\rightarrow 1^-} -\frac{\E[W_\lambda]}{\log(1-\lambda)} = x$, one could argue that for a sufficiently high value of $\lambda$, we have $\E[W_\lambda] \approx - x \cdot \log(1-\lambda)$. Instead of taking the limit $\lambda \rightarrow 1^-$ in \eqref{eq:gen_limit_response}, one could also consider the limit $\lambda \rightarrow 0^+$. However, it is not hard to see that for all policies we considered this limit is simply equal to zero, yielding $\E[W_\lambda] \approx 0$ for sufficiently small values of $\lambda$ which is not all too useful as an approximation. 

We therefore introduce the value $p_\lambda$ which denotes the probability that a job is assigned to an idle queue. For the LL($d$) policy, it is not hard to see that any job is assigned to an idle queue with probability $p_\lambda = 1-\lambda^d$.

We suggest to use $-\log(p_\lambda)$ as the scaling factor instead of $-\log (1-\lambda)$
such that we also get a non-zero limit when $\lambda$ tends to zero, that is, we consider the fraction $-\frac{\E[W_\lambda]}{\log(p_\lambda)}$. We show that the limit 
for $\lambda$ tending to one is not altered with the new scaling factor. In Section \ref{sec:num_exp} we show that this new scaling can be used as a useful approximation of the expected waiting time when $\lambda$ is close to $0$ or $1$.

We now explain how to compute $p_\lambda$ and the associated limits for the policies considered in this work.
\subsection{LL($d$)}
For the LL($d$) policy we have $p_\lambda = 1-\lambda^d$. It follows that:
\begin{proposition} \label{prop:bounds_LLd}
For the LL($d$) policy with exponential job sizes of mean one we have:
\begin{equation}
\lim_{\lambda\rightarrow 0^+} -\frac{\E[W_\lambda]}{\log(1-\lambda^d)} = \frac{1}{d} \qquad \mbox{ and } \qquad \lim_{\lambda\rightarrow 1^-} -\frac{\E[W_\lambda]}{\log(1-\lambda^d)} = \frac{1}{d-1}.
\end{equation}
\end{proposition}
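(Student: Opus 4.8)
The plan is to handle the two limits separately. The high‑load statement is essentially a reweighting of Corollary~\ref{cor:LLd}, whereas the low‑load statement needs a short self‑contained estimate on $\E[W_\lambda]$.

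For the high‑load limit I would write $1-\lambda^d=(1-\lambda)\sum_{k=0}^{d-1}\lambda^k$, so that $\log(1-\lambda^d)=\log(1-\lambda)+\log\!\big(\sum_{k=0}^{d-1}\lambda^k\big)$. As $\lambda\to1^-$ the correction term tends to the constant $\log d$ while $\log(1-\lambda)\to-\infty$, hence $\log(1-\lambda^d)/\log(1-\lambda)\to1$. Writing $-\E[W_\lambda]/\log(1-\lambda^d)=\big(-\E[W_\lambda]/\log(1-\lambda)\big)\cdot\big(\log(1-\lambda)/\log(1-\lambda^d)\big)$ and invoking Corollary~\ref{cor:LLd} gives the value $\tfrac1{d-1}$.

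For the low‑load limit I would first pin down $\E[W_\lambda]$. Recall that for LL($d$) the ccdf $\bar F$ solves $\bar F'=\lambda\bar F^d-\bar F$ with $\bar F(0)=\lambda$, and from the fixed‑point identity \eqref{eq:proof_resp1} specialised to $T_\lambda(u)=\lambda u^d$ one gets $\E[W_\lambda]=\int_0^\infty\bar F(w)^d\,dw$. Since $\bar F$ is decreasing with $\bar F(0)=\lambda$, the substitution $\bar F(w)=\lambda G(w)$ yields $G'=\lambda^d G^d-G$, $G(0)=1$, with $G\in[0,1]$ decreasing. Using $G^d\le G$ on $[0,1]$ together with an integrating‑factor (Gr\"onwall‑type) comparison against the solutions of $\tilde G'=-\tilde G$ and $\tilde G'=-(1-\lambda^d)\tilde G$ gives the sandwich $e^{-w}\le G(w)\le e^{-(1-\lambda^d)w}$, hence $\tfrac1d\le\int_0^\infty G(w)^d\,dw\le\tfrac1{d(1-\lambda^d)}$ and therefore $\tfrac{\lambda^d}{d}\le\E[W_\lambda]\le\tfrac{\lambda^d}{d(1-\lambda^d)}$. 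Combining this with the elementary inequality $\lambda^d\le-\log(1-\lambda^d)\le\tfrac{\lambda^d}{1-\lambda^d}$ yields $\tfrac{1-\lambda^d}{d}\le-\E[W_\lambda]/\log(1-\lambda^d)\le\tfrac1{d(1-\lambda^d)}$, and letting $\lambda\to0^+$ squeezes both sides to $\tfrac1d$.

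An alternative route for the low‑load part is to read the asymptotics directly off Proposition~\ref{prop:WL_LLd} at $p=1$ (note LL($d$)$\,=\,$LL($d,1$)): there $\E[W_\lambda]=\E[Q_\lambda]/\lambda-1=\sum_{n\ge1}\lambda^{dn}/(1+n(d-1))$, whose leading term as $\lambda\to0^+$ is $\lambda^d/d$, while $-\log(1-\lambda^d)\sim\lambda^d$, so the ratio again tends to $1/d$. I do not expect a genuine obstacle: the only mildly delicate point is bookkeeping — verifying the comparison inequalities for $G$, or, on the alternative route, checking that the $p=1$ boundary case of Proposition~\ref{prop:WL_LLd} is legitimate — and the proposition is really a corollary of results already established.
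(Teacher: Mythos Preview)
Your argument is correct. The high-load part is identical to the paper's: both factor $1-\lambda^d=(1-\lambda)\sum_{k=0}^{d-1}\lambda^k$ (equivalently, observe $\log(1-\lambda^d)/\log(1-\lambda)\to 1$) and then invoke Corollary~\ref{cor:LLd}.

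For the low-load part you take a genuinely different route. The paper argues heuristically at the level of the queueing model: as $\lambda\to 0^+$ the only contributing event is that all $d$ probed servers are busy with a single job, which happens with probability $\lambda^d$, and then the waiting time is the minimum of $d$ unit-mean exponentials, giving $\E[W_\lambda]\approx \lambda^d/d$; combined with $-\log(1-\lambda^d)\sim\lambda^d$ this yields $1/d$. Your approach instead stays entirely at the ODE level: after the rescaling $\bar F=\lambda G$ you sandwich $G$ between $e^{-w}$ and $e^{-(1-\lambda^d)w}$ via a Gr\"onwall-type comparison, integrate, and squeeze. This is more rigorous than the paper's sketch (which tacitly asserts, rather than proves, that higher-order events are negligible), and it yields explicit two-sided bounds $(1-\lambda^d)/d\le -\E[W_\lambda]/\log(1-\lambda^d)\le 1/(d(1-\lambda^d))$ valid for all $\lambda\in(0,1)$, not just the limit. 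Your alternative via Proposition~\ref{prop:WL_LLd} at $p=1$ is also perfectly valid and arguably the shortest route, since the series $\E[W_\lambda]=\sum_{n\ge 1}\lambda^{dn}/(1+n(d-1))$ makes the asymptotics $\E[W_\lambda]\sim\lambda^d/d$ immediate. The paper's heuristic has the advantage of carrying a clear probabilistic interpretation and of generalising effortlessly to LL($d,K$) and LL($d_1,\dots,d_n,p_1,\dots,p_n$) in the subsequent propositions, where no closed form is available; your ODE comparison would also extend, but with a bit more bookkeeping.
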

\begin{proof}
It is easy to see that $\lim_{\lambda \rightarrow 1^-} \frac{\log(1-\lambda)}{\log(1-\lambda^d)} = 1$, which shows that the limit $\frac{1}{d-1}$ when $\lambda$ tends to one  remains valid. For the low load limit we only need to consider the case where the dispatcher selects $d$ servers which all have exactly one job in their queue. The LL($d$) policy assigns the incoming job to the server which finishes its job first. Therefore we find that the mean waiting time for the LL($d$) policy (for $\lambda \approx 0$) can be approximated by $(1-p_\lambda)/d = \frac{\lambda^d}{d}$. Therefore the result follows from the fact that $\lim_{\lambda \rightarrow 0^+} - \frac{\lambda^d}{\log(1-\lambda^d)} = 1$.
\end{proof}

\subsection{LL($d, K$)}
For the LL($d,K$) policy we can compute $p_\lambda$ using the same ideas and we obtain the following result:
\begin{proposition} \label{prop:LLdK_lb}
For the LL($d, K$) policy with exponential job sizes with mean one, we find that:
\begin{equation}
\lim_{\lambda \rightarrow 0^+} - \frac{\E[W_\lambda]}{\log(p_\lambda)} = \frac{1}{d-K+1} \qquad \mbox{ and } \qquad \lim_{\lambda \rightarrow 1^-} - \frac{\E[W_\lambda]}{\log(p_\lambda)} = \frac{K}{d-K} 
\end{equation}
with $p_\lambda = 1 - \sum_{j=0}^{K-1} \frac{K-j}{K} \binom{d}{j}(1-\lambda)^j \lambda^{d-j}$.
\end{proposition}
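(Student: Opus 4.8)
The plan is to handle the two limits separately, leaning on the high-load value \eqref{eq:lim_LLdK} already established in Section~\ref{sec:LLdK} and on the identity $\E[W_\lambda]=\int_0^\infty T_\lambda(\bar F(w))/\lambda\,dw$ extracted from the proof of Corollary~\ref{cor:gen_result}. The first thing I would record is that $p_\lambda=1-T_\lambda(\lambda)/\lambda$: a job in LL($d,K$) has zero waiting time exactly when it is sent to an idle server, so $p_\lambda=\P(W_\lambda=0)=1-\P(W_\lambda>0)$, and since $\P(W_\lambda>w)=T_\lambda(\bar F(w))/\lambda$ with $\bar F(0)=\lambda$, this equals $1-T_\lambda(\lambda)/\lambda$; plugging $\bar F(0)=\lambda$ into \eqref{eq:LLdK_Tlam} shows $T_\lambda(\lambda)/\lambda=\sum_{j=0}^{K-1}\tfrac{K-j}{K}\binom{d}{j}(1-\lambda)^j\lambda^{d-j}$, which matches the claimed formula for $1-p_\lambda$. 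In particular $p_\lambda\to1^-$ as $\lambda\to0^+$ and $p_\lambda\to0^+$ as $\lambda\to1^-$.

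For the high-load limit I would Taylor-expand $1-p_\lambda$ in $\varepsilon=1-\lambda$: only the $j=0$ and $j=1$ summands contribute at first order, giving $1-p_\lambda=(1-\varepsilon)^d+\tfrac{K-1}{K}d\,\varepsilon(1-\varepsilon)^{d-1}+O(\varepsilon^2)=1-\tfrac{d}{K}\varepsilon+O(\varepsilon^2)$, hence $p_\lambda=\tfrac{d}{K}(1-\lambda)+O((1-\lambda)^2)$ and $\log p_\lambda=\log(1-\lambda)+\log(d/K)+o(1)$. Therefore $\log(1-\lambda)/\log p_\lambda\to1$ and, combining with \eqref{eq:lim_LLdK},
\[
\lim_{\lambda\to1^-}-\frac{\E[W_\lambda]}{\log p_\lambda}=\Big(\lim_{\lambda\to1^-}-\frac{\E[W_\lambda]}{\log(1-\lambda)}\Big)\cdot\Big(\lim_{\lambda\to1^-}\frac{\log(1-\lambda)}{\log p_\lambda}\Big)=\frac{K}{d-K}.
\]

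For the low-load limit I would write $-\E[W_\lambda]/\log p_\lambda=\tfrac{\E[W_\lambda]}{1-p_\lambda}\cdot\tfrac{1-p_\lambda}{-\log p_\lambda}$; since $1-p_\lambda\to0^+$, the second factor tends to $1$. For the first factor, dividing $\E[W_\lambda]=\int_0^\infty T_\lambda(\bar F(w))/\lambda\,dw$ by $1-p_\lambda=T_\lambda(\lambda)/\lambda$ gives $\tfrac{\E[W_\lambda]}{1-p_\lambda}=\int_0^\infty \tfrac{T_\lambda(\bar F(w))}{T_\lambda(\lambda)}\,dw$. Here I would sandwich $\bar F$: from $\bar F'(w)=T_\lambda(\bar F(w))-\bar F(w)$ together with $T_\lambda\ge0$ and $T_\lambda(u)\le\lambda u$ (Proposition~\ref{prop:SQdK_recursion_correct}) one gets $\lambda e^{-w}\le\bar F(w)\le\lambda e^{-(1-\lambda)w}$, so $\bar F(w)/\lambda\to e^{-w}$ pointwise. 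Substituting $u=\lambda g$ in \eqref{eq:LLdK_Tlam} exhibits $T_\lambda(\lambda g)$ as $\tfrac{\lambda^{d-K+2}}{K}$ times the bracket $\sum_{j=0}^{K-1}(K-j)\binom{d}{j}\lambda^{K-1-j}g^{d-j}(1-\lambda g)^j$, whose only surviving term as $\lambda\to0^+$ is the $j=K-1$ one, namely $\binom{d}{K-1}g^{d-K+1}$; hence $T_\lambda(\bar F(w))/T_\lambda(\lambda)\to e^{-(d-K+1)w}$ pointwise, while the same bracket together with $\bar F(w)/\lambda\le e^{-(1-\lambda)w}\le1$ supplies, for $\lambda\le 1/2$, a $\lambda$-independent integrable majorant of the form $C\,e^{-(d-K+1)w/2}$. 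Dominated convergence then yields $\int_0^\infty T_\lambda(\bar F(w))/T_\lambda(\lambda)\,dw\to\int_0^\infty e^{-(d-K+1)w}\,dw=\tfrac{1}{d-K+1}$, which is the claim.

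The high-load half is a routine asymptotic expansion. The genuine work is in the low-load half, and the main obstacle there is justifying the exchange of limit and integral, i.e.\ producing a dominating function uniform in $\lambda$ near $0$. The clean resolution is the sandwich $\lambda e^{-w}\le\bar F(w)\le\lambda e^{-(1-\lambda)w}$, which both pins down the pointwise limit of $\bar F(w)/\lambda$ and, after the factor $g^{d-K+1}\le1$ is pulled out of the bracket, bounds $T_\lambda(\bar F(w))/T_\lambda(\lambda)$ above by a fixed exponential; without this bound one would have to run a Grönwall-type estimate on $\bar F(w)/\lambda$ instead.
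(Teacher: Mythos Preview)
Your argument is correct. For the high-load limit you do exactly what the paper does: reduce to $\log(1-\lambda)/\log p_\lambda\to 1$ via a first-order expansion (the paper phrases this as an application of l'H\^opital's rule), and then invoke \eqref{eq:lim_LLdK}.

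For the low-load limit your route is genuinely different. The paper argues probabilistically: as $\lambda\to 0^+$ the leading contribution to $\E[W_\lambda]$ comes from the event that a batch probe hits exactly $K-1$ idle servers and $d-K+1$ servers each holding a single job, so one job in the batch is routed to a busy server and waits for the minimum of $d-K+1$ residual exponentials; this yields $\E[W_\lambda]\sim \lambda^{d-K+1}\binom{d}{K-1}/\bigl(K(d-K+1)\bigr)$, and matching with the leading term of $-\log p_\lambda$ gives $1/(d-K+1)$. Your approach instead stays entirely within the ODE framework: you use the exact identity $\E[W_\lambda]=\int_0^\infty T_\lambda(\bar F(w))/\lambda\,dw$, the sandwich $\lambda e^{-w}\le\bar F(w)\le\lambda e^{-(1-\lambda)w}$, the factorisation of $T_\lambda(\lambda g)$ isolating the $j=K-1$ term, and dominated convergence. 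The paper's argument is shorter and gives the probabilistic picture behind the constant $1/(d-K+1)$, but it does not fully justify why the other events contribute at higher order; your analytic route is more self-contained and rigorous, at the cost of hiding the combinatorial interpretation.
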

\begin{proof}
We first compute the probability $p_\lambda$ that an arbitrary job is assigned to an idle server for the LL($d, K$) policy. It is not hard to see that $p_\lambda$ is given by:
$$
p_\lambda = \frac{\E[\min\{\mbox{number of idle servers selected}, K\}]}{K} = \frac{\sum_{j=0}^d \min\{j, K\} \binom{d}{j} (1-\lambda)^j \lambda^{d-j}}{K}.
$$
This simplifies to the given formula for $p_\lambda$ by using $1 = \sum_{j=0}^d \binom{d}{j} (1-\lambda)^j \lambda^{d-j}$.

The low load limit can be computed by noting that the only situation we need to look at is a probe which finds $K-1$ idle servers and $d-K+1$ servers which are processing a single job. For the job with non-zero waiting time, we find that the expected waiting time is given by the minimum of $d-K+1$ exponential jobs. One finds that the expected waiting time (for $\lambda \approx 0$) is given by $\lambda^{d-K+1} \frac{\binom{d}{K-1}}{K\cdot (d-K+1)}$. From this the limit for $\lambda$ tending to $0$ follows after computing:
\begin{align*}
\lim_{\lambda \rightarrow 1^-} - \frac{\lambda^{d-K+1} \frac{\binom{d}{K-1}}{K\cdot (d-K+1)}}{\log\left( 1 - \sum_{j=0}^{K-1} \frac{K-j}{K} \binom{d}{j} (1-\lambda)^j \lambda^{d-j} \right)}
&=
\lim_{\lambda \rightarrow 1^-} - \frac{\lambda^{d-K+1} \frac{\binom{d}{K-1}}{K\cdot (d-K+1)}}{\log\left( 1 - \frac{1}{K} \binom{d}{K-1} (1-\lambda)^{K-1} \lambda^{d-K+1} \right)}\\
&= \frac{1}{d-K+1}.
\end{align*}
For the limit of $\lambda$ tending to $1$, one simply needs to show that $\frac{\log(p_\lambda)}{\log(1-\lambda)}$ converges to one as $\lambda \rightarrow 1^-$, which follows by applying l'Hopital's rule.
\end{proof}

\subsection{LL($d_1,\dots,d_n,p_1,\dots,p_n$)}
For the LL($d_1,\dots,d_n,p_1,\dots,p_n$) policy we find that the probability of assigning a job to an empty server is equal to $p_\lambda = \sum_i p_i \cdot (1 - \lambda^{d_i}) = 1-\sum_i p_i \lambda^{d_i}$. We have the following result:
\begin{proposition} \label{prop:low_traffic_LLdK}
For the LL($d_1,\dots,d_n,p_1,\dots,p_n$) policy with exponential job sizes with mean one we have:
\begin{equation} \label{eq:low_traffic_LLd1dn}
\lim_{\lambda \rightarrow 0^+} - \frac{\E[W_\lambda]}{\log(p_\lambda)} = \frac{1}{d_j} \qquad \mbox{ and } \qquad \lim_{\lambda \rightarrow 1^-} - \frac{\E[W_\lambda]}{\log(p_\lambda)} = \frac{1}{\sum_i p_i d_i - 1}
\end{equation}
with $p_\lambda = 1 - \sum_i p_i \lambda ^{d_i}$ and $j = \min\{ i \mid p_i > 0 \}$.
\end{proposition}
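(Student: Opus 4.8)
The plan is to treat the two limits separately, following the template of the proofs of Propositions~\ref{prop:bounds_LLd} and~\ref{prop:LLdK_lb}. I will assume, as the notation $j=\min\{i\mid p_i>0\}$ implicitly requires, that the $d_i$ are listed in nondecreasing order, so that $d_j=\min\{d_i\mid p_i>0\}$.

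For the high-load limit, first I would show that $\lim_{\lambda\rightarrow 1^-}\log(p_\lambda)/\log(1-\lambda)=1$. Writing $p_\lambda=\sum_i p_i(1-\lambda^{d_i})$ and $1-\lambda^{d_i}=(1-\lambda)(1+\lambda+\dots+\lambda^{d_i-1})$ gives $p_\lambda/(1-\lambda)\rightarrow\sum_i p_i d_i\in(0,\infty)$, hence $\log(p_\lambda)=\log(1-\lambda)+\log\big(p_\lambda/(1-\lambda)\big)$ and the ratio tends to $1$. Combining this with Theorem~\ref{thm:LLd1dn} (whose hypotheses were verified in Lemma~\ref{lem:requirements_SQd1dn} and Section~\ref{sec:computation}) then yields $\lim_{\lambda\rightarrow 1^-}-\E[W_\lambda]/\log(p_\lambda)=1/(\sum_i p_i d_i-1)$.

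For the low-load limit, the guiding heuristic is the one used before: a job routed via the $d_i$-sampling stream waits only if all $d_i$ sampled servers are busy, which for $\lambda\approx 0$ happens with probability $\approx\lambda^{d_i}$, and it then waits for the minimum of $d_i$ independent unit exponentials; the $d_j$-stream therefore dominates and $\E[W_\lambda]\approx p_j\lambda^{d_j}/d_j$. To make this rigorous I would start from the identity $\E[W_\lambda]=\sum_i p_i\int_0^\infty\bar F(w)^{d_i}\,dw$ (from the proof of Theorem~\ref{thm:LLd1dn}) and sandwich $\bar F$ by
$$
\lambda e^{-w}\ \leq\ \bar F(w)\ \leq\ \lambda e^{-(1-\lambda)w},\qquad w\geq 0.
$$
The lower bound is immediate from the fixed-point form $\bar F(w)=\lambda e^{-w}+\int_0^w T_\lambda(\bar F(u))e^{u-w}\,du$ established in the proof of Corollary~\ref{cor:gen_result}, since $T_\lambda\geq 0$; the upper bound follows from $T_\lambda(u)=\lambda\sum_i p_i u^{d_i}\leq\lambda u$ on $(0,1)$ (because $\sum_i p_i u^{d_i-1}\leq\sum_i p_i=1$ when $u<1$ and $d_i\geq 1$) together with Gr\"onwall's inequality applied to $\bar F'\leq-(1-\lambda)\bar F$. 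Integrating yields
$$
p_j\frac{\lambda^{d_j}}{d_j}\ \leq\ \E[W_\lambda]\ \leq\ \frac{1}{1-\lambda}\sum_i p_i\frac{\lambda^{d_i}}{d_i},
$$
and since the right-hand side equals $(1+o(1))\,p_j\lambda^{d_j}/d_j$ as $\lambda\rightarrow 0^+$ (the $d_j$-term dominating), we get $\E[W_\lambda]/\lambda^{d_j}\rightarrow p_j/d_j$. On the other hand $-\log(p_\lambda)=-\log\big(1-\sum_i p_i\lambda^{d_i}\big)=\sum_i p_i\lambda^{d_i}+O\big((\sum_i p_i\lambda^{d_i})^2\big)$, so $-\log(p_\lambda)/\lambda^{d_j}\rightarrow p_j$; dividing the two limits gives $\lim_{\lambda\rightarrow 0^+}-\E[W_\lambda]/\log(p_\lambda)=1/d_j$.

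I do not expect a real obstacle: unlike the LL($d,K$) analysis of Section~\ref{sec:LLdK}, the ``at least as good as random routing'' bound $T_\lambda(u)\leq\lambda u$ — which is all that is needed to control the tail of $\bar F$ both for small and for large $\lambda$ — is completely elementary for this policy, so there is nothing like Assumption~\ref{item:h_decreasing} to fight. The only points requiring mild care are the bookkeeping of which sampling stream is asymptotically dominant as $\lambda\rightarrow 0^+$ (handled by the ordering convention above; ties $d_i=d_j$ are harmless, since the extra terms then appear identically in the numerator and in $1-p_\lambda$) and keeping track of the $o(1)$ error terms in the two one-sided bounds on $\E[W_\lambda]$.
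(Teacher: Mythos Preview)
Your proof is correct. The high-load half is exactly what the paper does (reduce to Theorem~\ref{thm:LLd1dn} by showing $\log(p_\lambda)/\log(1-\lambda)\to 1$; the paper phrases this as an application of l'H\^opital). For the low-load half, however, you take a different and in fact more rigorous route than the paper: the paper simply repeats the heuristic from Proposition~\ref{prop:bounds_LLd}, arguing informally that as $\lambda\to 0^+$ the only way a job waits is when the $d_j$-stream samples $d_j$ servers each processing a single job, giving $\E[W_\lambda]\approx p_j\lambda^{d_j}/d_j$. You instead sandwich $\bar F$ between $\lambda e^{-w}$ and $\lambda e^{-(1-\lambda)w}$ (both following from properties already established in the paper) and integrate term by term, which turns the heuristic into a clean squeeze argument without any appeal to the ``only one job per busy server'' picture. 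This buys you an honest $o(1)$ control and also makes the handling of ties $d_i=d_j$ transparent, exactly as you note; the price is a slightly longer write-up than the paper's one-line reference to Proposition~\ref{prop:bounds_LLd}. One cosmetic point: the sentence ``the right-hand side equals $(1+o(1))\,p_j\lambda^{d_j}/d_j$'' should read $(1+o(1))\sum_{i:d_i=d_j}p_i\,\lambda^{d_j}/d_j$ to be literally correct in the tied case, but since the matching factor appears in $-\log(p_\lambda)$ this does not affect the limit, as you already observe.
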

\begin{proof}
The proof follows the same lines as the proof of Proposition \ref{prop:bounds_LLd} noting that as $\lambda$ gets close to zero, one will only find exclusively busy servers when probing $d_j$ servers.
\end{proof}
%\begin{remark}
%While the low traffic limit in \eqref{eq:low_traffic_LLd1dn} only depends on the number of probes per arrival, the denominator used to obtain the low traffic limit does depend in a non-trivial way on the choice of $p_i, d_i$.
%\end{remark}

\section{Numerical Experiments} \label{sec:num_exp}
\begin{figure*}[t]
\begin{subfigure}{0.45\textwidth}
\centering
\captionsetup{width=.8\linewidth}
\includegraphics[width=1\linewidth]{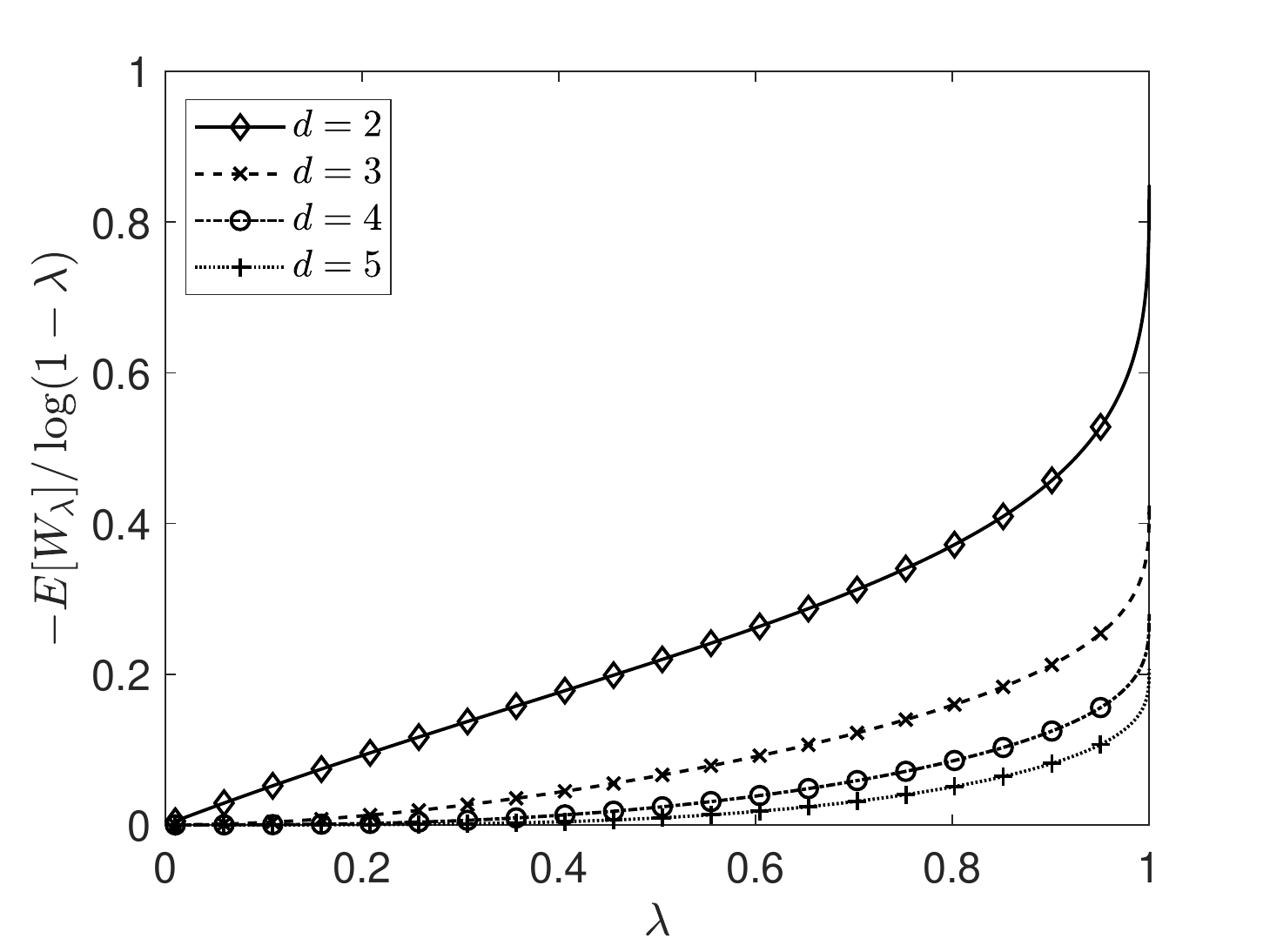}
\caption{$\E[W_\lambda]$ is scaled by $-\log(1-\lambda)$.}
\label{fig1a}
\end{subfigure}
\begin{subfigure}{0.45\textwidth}
\centering
\captionsetup{width=.8\linewidth}
\includegraphics[width=1\linewidth]{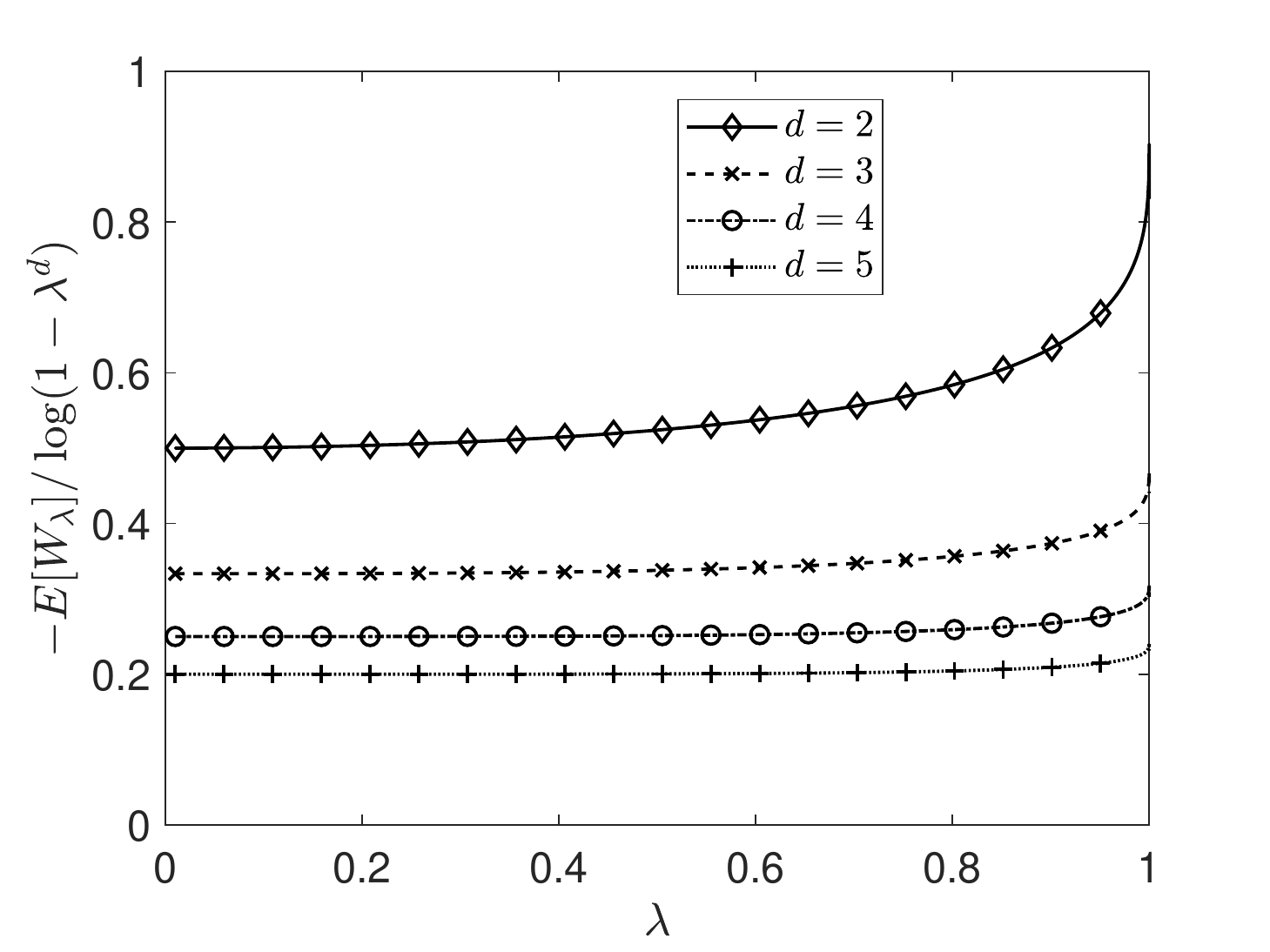}
\caption{$\E[W_\lambda]$ is scaled by $-\log(1-\lambda^d)$.}
\label{fig1b}
\end{subfigure}
\caption{$\E[W_\lambda]$ as a function of $\lambda$ for the LL($d$) policy with
different scalings.}
\label{fig1}
\end{figure*}

\begin{figure*}[t]
\begin{subfigure}{0.45\textwidth}
\centering
\captionsetup{width=.8\linewidth}
\includegraphics[width=1\linewidth]{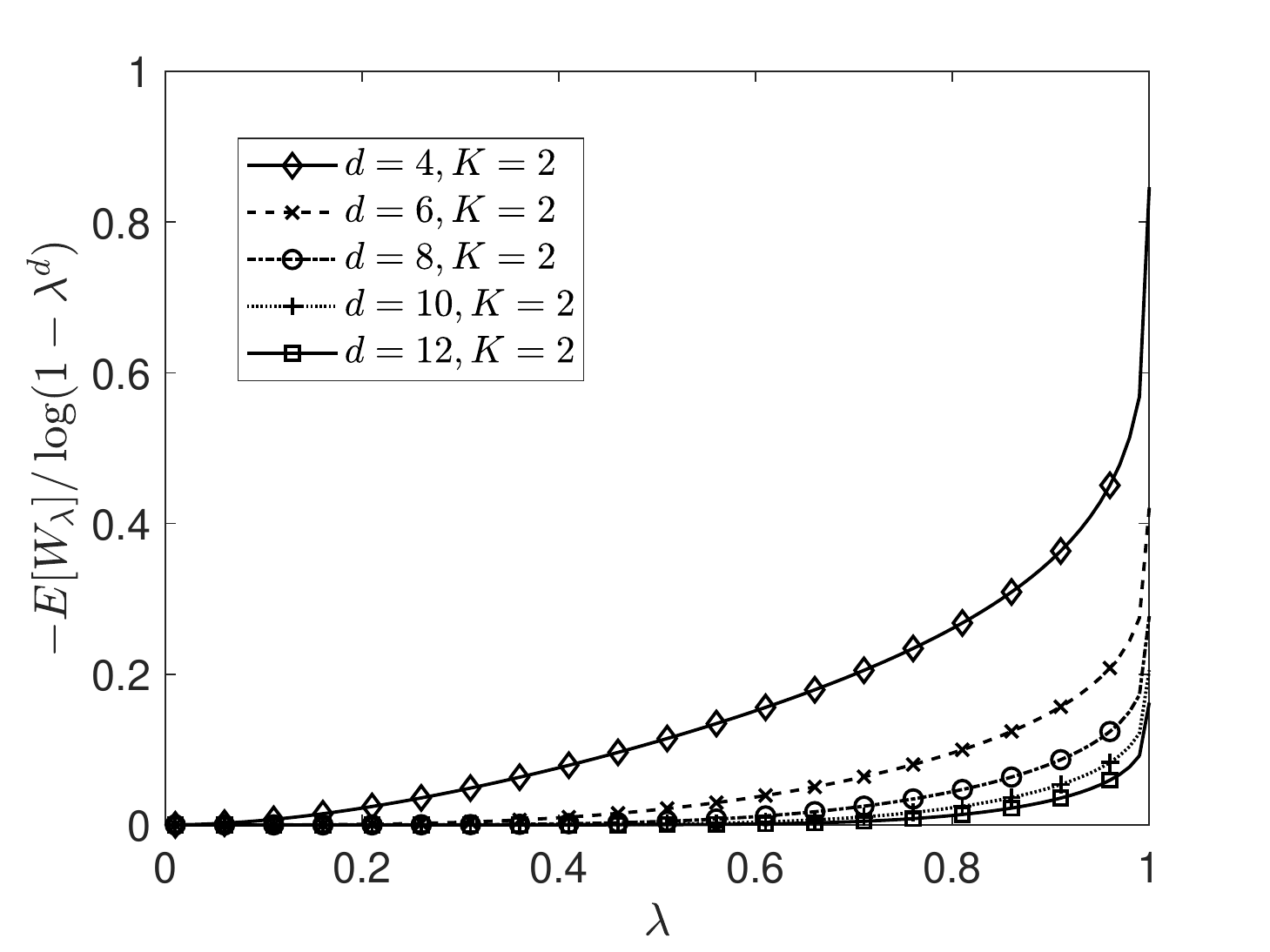}
\caption{$\E[W_\lambda]$ is scaled by $-\log(1-\lambda)$.}
\label{fig2a}
\end{subfigure}
\begin{subfigure}{0.45\textwidth}
\centering
\captionsetup{width=.8\linewidth}
\includegraphics[width=1\linewidth]{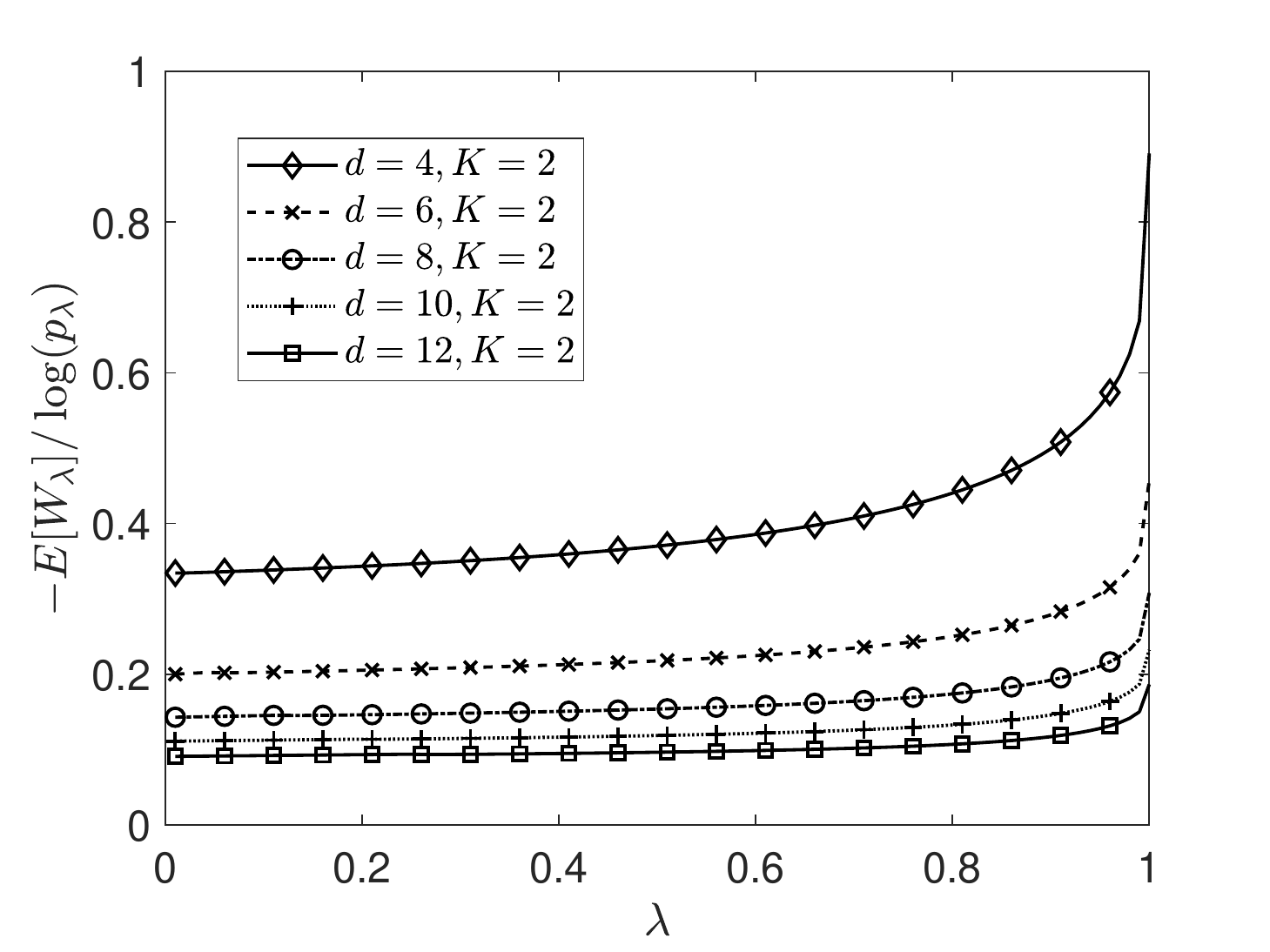}
\caption{$\E[W_\lambda]$ is scaled by $-\log(p_\lambda)$, where $p_\lambda$ is given as in Proposition \ref{prop:LLdK_lb}.}
\label{fig2b}
\end{subfigure}
\caption{$\E[W_\lambda]$ as a function of $\lambda$ for the LL($d, K$) policy with
different scalings.}
\label{fig2}
\end{figure*}
In Figure \ref{fig1a}, we show the evolution of $-\frac{\E[W_\lambda]}{\log(1-\lambda)}$ as a function of $\lambda$ for the LL($d$) policy. We observe that (as stated before) the high load limit is given as $\frac{1}{d-1}$, while the low load limit is simply zero. However in Figure \ref{fig1b}, we used the alternate scaling $-\log(1-\lambda^d)$ as argued in Section \ref{sec:low_traffic}.  Figure \ref{fig1b} also shows that the alternate
scaling flattens the curve, which shows that $-\log(1-\lambda^d)/d$ resp~$-\log(1-\lambda^d)/(d-1)$ are better approximations of $\E[W_\lambda]$ (which become exact as $\lambda \approx 0$ resp.~$\lambda \approx 1$).

In Figure \ref{fig2} we repeat the experiment conducted in Figure \ref{fig1} for the LL($d, K$) policy instead of the LL($d$) policy. We fix $K=2$ and vary the value of $d$, we again observe that using the alternate scaling $-\log(p_\lambda)$ yields a flatter curve and thus better
approximation for $\lambda$ close to zero or one.

\section{Conclusions and Extensions}\label{sec:concl}
\subsection*{Conclusions}
In this paper we studied the behaviour of the expected waiting time $\E[W_\lambda]$ when $\lambda \approx 1$ for a variety of load balancing policies in the mean field regime. We present a set of sufficient assumptions such that the limit $\lim_{\lambda \rightarrow 1^-} -\E[W_\lambda]/\log(1-\lambda)$ can be derived without much effort. For some load balancing policies (such as LL($d$)) these assumptions are easy to verify, while for other polices (such as LL($d,K$)) this turned out to be much more challenging. Even if it is unclear how to verify these assumptions, our result yields a natural conjecture on the limiting value. The resulting limiting value is also surprisingly elegant for the policies studied in this paper. Adjusting the denominator allows one to retain the limiting value as $\lambda \rightarrow 1^-$ while also obtaining an elegant low load limit.
As our main theorem applies to any ODE for which $T_\lambda$ satisfies the sufficient assumptions, our main results may also find applications outside the area of load balancing. 

\subsection*{Extensions}
In this paper, we focused on workload dependent load balancing policies. For a queue length dependent policy such as SQ($d$), one denotes by $u_k$ the probability that the cavity queue has $k$ or more jobs waiting for service. In the mean field regime these probabilities $u_k$ can be expressed via a recursive relation $u_{k+1} = T_\lambda(u_k)$, where the function $T_\lambda$ is the same as  for the workload dependent version of the same policy. For example, for SQ($d, K$) one can show that $u_{k+1} = T_\lambda(u_k)$, where $T_\lambda$ is defined as in \eqref{eq:LLdK_Tlam}. This allows one to establish a theorem, similar to Theorem \ref{thm:gen_result_ODE}, that for the queue length dependent variants the limit for $\lambda$ tending to one is given by:
$$
\lim_{\lambda \rightarrow 1^-} - \frac{\E[W_\lambda]}{\log(1-\lambda)} = \frac{B}{\log(A)}.
$$
Note that the denominator now equals $\log(A)$ instead of $A-1$. 
In particular, for the SQ($d, K$) policy this implies that $\lim_{\lambda \rightarrow 1^-} - \frac{\E[W_\lambda]}{\log(1-\lambda)} = \frac{1}{\log(\frac{d}{K})}$, which is 
a generalization of the result in \cite{mitzenmacher2001power} for $K=1$. 
In the queue length dependent case the assumptions on $T_\lambda$ are the same as for the workload dependent load balancing policies, except that $\bar w_\lambda$ in assumption \ref{item:ODE_only_new_req} is replaced by a $\bar k _\lambda \in \mathbb{N}$.

Although the paper is restricted to exponential job sizes, numerical experiments (not reported in the paper) suggest that the observations made in Figure \ref{fig0} also hold for non-exponential job size distributions, which suggests that the main ideas presented in this paper may  be more generally applicable. 

\begin{CCSXML}
ight)<ccs2012>
 <concept>
  <concept_id>10010520.10010553.10010562</concept_id>
  <concept_desc>Computer systems organization~Embedded systems</concept_desc>
  <concept_significance>500</concept_significance>
 </concept>
 <concept>
  <concept_id>10010520.10010575.10010755</concept_id>
  <concept_desc>Computer systems organization~Redundancy</concept_desc>
  <concept_significance>300</concept_significance>
 </concept>
 <concept>
  <concept_id>10010520.10010553.10010554</concept_id>
  <concept_desc>Computer systems organization~Robotics</concept_desc>
  <concept_significance>100</concept_significance>
 </concept>
 <concept>
  <concept_id>10003033.10003083.10003095</concept_id>
  <concept_desc>Networks~Network reliability</concept_desc>
  <concept_significance>100</concept_significance>
 </concept>
</ccs2012>
\end{CCSXML}

\ccsdesc[500]{Computer systems organization~Embedded systems}
\ccsdesc[300]{Computer systems organization~Redundancy}
\ccsdesc{Computer systems organization~Robotics}
\ccsdesc[100]{Networks~Network reliability}

%
% End generated code
%

\bibliography{thesis}

\section{Appendix}
\subsection{Proof of Proposition \ref{prop:SQdK_recursion_correct}} \label{app:proof_SQdK_recursion_correct}
\begin{proof}
We may compute:
\begin{align*}
T_\lambda(u)  &= \lambda \sum_{j=0}^{K-1} \frac{K-j}{K} \binom{d}{j} u^{d-j}
(1-u)^j \leq \lambda \sum_{j=0}^{K-1} \frac{d-j}{d} \binom{d}{j} u^{d-j} (1-u)^j = \lambda \sum_{j=0}^{K-1} \frac{d-j}{d} \binom{d}{d-j} u^{d-j} (1-u)^j\\
&= \lambda \sum_{j=0}^{K-1} \binom{d-1}{d-j-1} u^{d-j} (1-u)^j = \lambda u \sum_{j=0}^{K-1} \binom{d-1}{j} u^{d-1-j} (1-u)^j,
\end{align*}
and this last sum is bounded by one, from which the result follows.
\end{proof}

\subsection{Proof of Lemma \ref{lem:Tuoveru_increasing_SQdK}} \label{proof_lem:Tuoveru_increasing_SQdK}
\begin{proof}
We may divide both sides in \eqref{eq:Tuoveru_increasing} by $u^{d-2}$, we compute:
\begin{align*}
\frac{1}{u^{d-2}}\left( \frac{T_\lambda(u)}{u} \right)'
&= \frac{\lambda}{K} \sum_{j=0}^{K-1} (K-j) \binom{d}{j} (d-j-1) \left( \frac{1-u}{u} \right)^j 
- \frac{\lambda}{K} \sum_{j=1}^{K-1} (K-j) \binom{d}{j} j \left( \frac{1-u}{u} \right) ^{j-1}.
\end{align*}
Now let $\xi = \frac{1-u}{u}$ and note that $\xi \in (0,\infty)$ for $u \in (0,1)$, we find that $\frac{1}{u^{d-2}}\left( \frac{T_\lambda(u)}{u} \right)'$ can be further simplified as:
\begin{align*}
&\frac{\lambda}{K} \binom{d}{K-1} (d-K) \xi^{K-1} + \frac{\lambda}{K} \sum_{j=0}^{K-2} \bigg[ (K-j) \binom{d}{j} (d-j-1) - (K-j-1) \binom{d}{j+1} (j+1)\bigg] \xi^j \\
&= (d-K) \frac{\lambda}{K} \sum_{j=0}^{K-1} \binom{d}{j} \xi^j > 0.
\end{align*}
This shows that \eqref{eq:Tuoveru_increasing} holds.
\end{proof}

\subsection{Proof of Lemma \ref{lem:fplusag}} \label{app:lem:fplusag}
\begin{proof}
As $[0,1]$ is compact and $g$ is continuous we have $A= \max_{x\in [0,1]} \{ g(x) \}<\infty$. Moreover as $f$ is continuous and decreasing in $0$, we find a $\delta, \gamma > 0$ such that $f(\delta)=-\gamma$ and $f'(x)<0$ for all $x\in [0,\delta]$. If we now let $a_0 = \frac{\gamma}{A}$, the result easily follows
as $h_a(0) \geq 0$ and $h_a(\delta) \leq 0$.
\end{proof}

\subsection{Proof of Lemma \ref{lem:SQdk1}} \label{app:lem:SQdk1}
\begin{proof}
Dividing both sides of $u-T_\lambda(u)=0$ by $\lambda \cdot u^d$ we find this equation to be equivalent to:
$$
\frac{1}{\lambda}\frac{1}{u^{d-1}} - \frac{1}{K} \sum_{j=0}^{K-1} (K-j) \binom{d}{j} \left( \frac{1-u}{u} \right)^j = 0.
$$
Let $\xi = \frac{u-1}{u} \in [0,1]$ for $u \geq 1$, we obtain:
$$
\frac{(1-\xi)^{d-1}}{\lambda} - \frac{1}{K} \sum_{j=0}^{K-1} (K-j) \binom{d}{j} (-1)^j \xi^j
=0
$$
adding and subtracting $(1-\xi)^{d-1}$, we further find this to be equivalent to
$$
\left( \frac{1}{\lambda} - 1 \right) (1-\xi)^{d-1} + (1-\xi)^{d-1} - \frac{1}{K} \sum_{j=0}^{K-1} (K-j) \binom{d}{j} (-1)^j \xi^j = 0.
$$
If we let $a=\frac{1}{\lambda} - 1$ we find a value $a_0>0$ from Lemma \ref{lem:fplusag} such that there exists a root at $\xi_a$ for all $a \in [0,a_0)$ (as $f'(0)=1-d/K < 0$).
 It now suffices to take $\bar \lambda \geq \frac{1}{1+a_0}$ from which the existence of a root for $T_\lambda(u)=u$ follows, moreover \ref{enum:SQdK1} trivially follows from Lemma \ref{lem:fplusag} as we define $u_\lambda=\min\{u \in [1,\infty) \mid T_{\lambda}(u_\lambda)=u_\lambda \}$.

To show \ref{enum:ulamdK8} we note that:
\begin{align*}
\lim_{\lambda \rightarrow 1^-} h_{\lambda}(\varepsilon)
&= \left( \frac{1-(1-\varepsilon)^d}{\varepsilon} \right) - \frac{1}{K} \sum_{j=1}^{K-1} (K-j) \binom{d}{j} (1-\varepsilon)^{d-j} \varepsilon^{j-1}.
\end{align*}
Taking the limit $\varepsilon \rightarrow 0^+$ of this expression yields the sought result.
\end{proof}

\subsection{Proof of Lemma \ref{lem:Dulamn}} \label{app:proof:lem:Dulamn}
\begin{proof}
We first show that for $\Theta(u)=\frac{1}{\lambda}\frac{T_\lambda(u)}{u}$:
\begin{align}
\Theta^{(n)}(1) &= (-1)^{n+1} n! \frac{d-K}{K},  \label{eq:Dtheta_lowerK}
\end{align}
for $1 \leq n \leq K $  and
\begin{align}
\Theta^{(K+1)}(1) &= (-1)^{K+1} \frac{d! - (d-K)! (K+1)!}{(d-K)!} \frac{d-K}{K}. \label{eq:Dtheta_Kplus1}
\end{align}
We showed in the proof of Lemma \ref{lem:Tuoveru_increasing_SQdK} that:
$$
\frac{K}{d-K} \Theta'(u) = \sum_{j=0}^{K-1} \binom{d}{j} u^{d-j-2} (1-u)^j.
$$
%Furthermore it is not hard to show that:
%$$
%\frac{K}{d-K} \Theta^{(2)}(u) = -2 \sum_{j=0}^{K-2} \binom{d}{j} u^{d-j-3} (1-u)^j + E_1,
%$$
%with $E_1= \binom{d}{K-1} (d-K-1) u^{(d-K-2)} (1-u)^{K-1}$. Differentiating this once more allows us to %conclude:
%$$
%\frac{K}{d-K} \Theta^{(3)}(u)=3! \sum_{j=0}^{K-3} \binom{d}{j} u^{d-j-4} (1-u)^j + E_1' + E_2,
%$$
%with $E_2= -2 \binom{d}{K-2} (d-K-1) u^{d-K-2} (1-u)^{K-2}$. We can continue differentiating in this %manner, and in general we find:
By induction on $n$ we now show for $n \leq K$ that
\begin{align}\label{eq:Theta_n_diff}
\frac{K}{d-K} \Theta^{(n)}(u)&= (-1)^{n+1} n! \sum_{j=0}^{K-n} \binom{d}{j} u^{d-j-n-1} (1-u)^j + \sum_{j=1}^{n-1} E_j^{(n-j-1)},
\end{align}
where we denote:
\begin{align}\label{eq:Ej}
E_j= (-1)^{j+1} j! \binom{d}{K-j} (d-K-1) u^{d-K-2} (1-u)^{K-j}.
\end{align}
Indeed, one finds:
\begin{align*}
\frac{K}{d-K} &\Theta^{(n)}(u)
= \frac{K}{d-K} \frac{\partial}{\partial u} \bigg[ (-1)^n (n-1)! \sum_{j=0}^{K-n+1} \binom{d}{j} u^{d-j-n} (1-u)^j + \sum_{j=1}^{n-2} E_j^{(n-j-2)} \bigg]\\
&= \frac{K}{d-K} \bigg[ (-1)^{n+1} n! \sum_{j=0}^{K-n} \frac{\binom{d}{j+1}(j+1) - \binom{d}{j} (d-j-n)}{n} u^{d-j-n-1} (1-u)^j\\
& + (-1)^n (n-1)! \binom{d}{K-n+1} (d-K-1) u^{d-K-2} (1-u)^{K-n+1} +\sum_{j=1}^{n-2} E_j^{(n-j-1)}\bigg].
\end{align*}
The result then follows by induction by applying the equality:
$$
\frac{\binom{d}{j+1}(j+1) - \binom{d}{j} (d-j-n)}{n} = \binom{d}{j}.
$$
Noting that for any $n \leq K$ we have $\lim_{u\rightarrow 1} \sum_{j=1}^{n-2}  E_j^{(n-j-1)} = 0$, we find that \eqref{eq:Dtheta_lowerK} indeed holds. Furthermore we have:
$$
\frac{K}{d-K} \Theta^{(K+1)}(u)= (-1)^{K+1} K! (d-K-1) u^{d-K-2} + \sum_{j=1}^{K-1} E_j^{(K-j)}.
$$
Moreover, it is not hard to see that:
$$
\lim_{u \rightarrow 1} E_j^{(K-j)} = (-1)^{K+1} \binom{d}{K-j} (d-K-1) j! (K-j)!.
$$
This allows us to compute:
\begin{align*}
\frac{K}{d-K}& \Theta^{(K+1)} (1) = (-1)^{K+1} \left[ K! + \sum_{j=1}^{K-1} \binom{d}{K-j} j! (K-j)! \right] (d-K-1)\\
&= (-1)^{K+1}K! \left[ \sum_{j=0}^{K-1} \frac{\binom{d}{j}}{\binom{K}{j}} \right]  (d-K-1)
= (-1)^{K+1}K! \left(\binom{d}{K}-(K+1)\right),
\end{align*}
where we used identity (4.1) in \cite[p46]{gould1972combinatorial} with $j=0, n=K-1, z=d$ and $x=K$. This shows that \eqref{eq:Dtheta_Kplus1} indeed holds.

We now continue by induction to show (\ref{eq:Dulam_lowerK}-\ref{eq:Dulam_Kplus1}). For the case $n=1$ we note that from $u_{\lambda}=T_\lambda(u_{\lambda})$ it follows that:
\begin{align}
u_{\lambda}' &= 
\frac{1}{K} \sum_{j=0}^{K-1} (K-j) \binom{d}{j} u_\lambda ^{d-j} (1-u_\lambda)^j + \frac{\lambda}{K} \sum_{j=0}^{K-1} (K-j) (d-j) \binom{d}{j} u_\lambda^{d-j-1} (1-u_\lambda)^j u_\lambda' \nonumber\\
& -\frac{\lambda}{K} \sum_{j=1}^{K-1} (K-j) j \binom{d}{j} u_\lambda^{d-j} (1-u_\lambda)^{j-1} u_\lambda'. \label{eq:Dulam_LLdk}
\end{align}
Taking the limit $\lambda \rightarrow  1^-$ we obtain $\lim_{\lambda \rightarrow 1^-} u_{\lambda}' = 1 + \frac{d}{K} \lim_{\lambda \rightarrow 1^-} u_{\lambda}'$ yielding \eqref{eq:Dulam_lowerK} with $n=1$. Let $2 \leq n \leq K+1$ note that $u_\lambda= T_\lambda(u_\lambda)$ and therefore also $1=\lambda \Theta(u_\lambda)$. By differentiating both sides $n\geq 2$ times, it follows that we have:
\begin{align}\label{eq:1=lamTheta}
0= n \left( \frac{\partial}{\partial \lambda} \right)^{n-1} \Theta(u_\lambda) + \lambda \left( \frac{\partial}{\partial \lambda} \right)^{n} \Theta(u_\lambda).
\end{align}
It follows from the Fa\`a di Bruno formula that:
\begin{equation}\label{eq:bruno}
\left(\frac{\partial}{\partial \lambda}\right)^n \Theta (u_\lambda)
=
\sum_{k=1}^n \Theta^{(k)}(u_\lambda) B_{n,k}(u_\lambda',\dots, u_{\lambda}^{(n-k+1)})
\end{equation}
where $B_{n,k}$ denotes the exponential Bell polynomial. We have:
$$
B_{n,1}(u_\lambda',\dots,u_\lambda^{(n)})=u_\lambda^{(n)}
$$
and for $k >1$ induction allows us to state for $n \leq K+1$:
\begin{align*}
\lim_{\lambda \rightarrow 1^-} B_{n,k}(u_\lambda',\dots,u_{\lambda}^{(n-k+1)})&=
B_{n,k}\big( (-1) \frac{K}{d-K}, \ldots, (-1)^{n-k+1} (n-k+1)! \frac{d^{n-k} K}{(d-K)^{n-k+1}} \big)\\
&= B_{n,k}(1!,\dots,(n-k+1)!) \cdot (-1)^n \frac{d^{n-k} K^k}{(d-K)^n},
\end{align*}
where we used the simple identities
\begin{align*}
B_{n,k}(x_1y, x_2y, \ldots,x_{n-k+1}y) = B_{n,k}(x_1,\ldots,x_{n-k+1}) y^k, \\
B_{n,k}(x_1z, x_2z^2, \ldots,x_{n-k+1}z^{n-k+1}) = B_{n,k}(x_1,\ldots,x_{n-k+1}) z^n,
\end{align*}
with $y=K/d$ and $z=-d/(d-K)$.
Using \eqref{eq:Lah} we have:
$$
\lim_{\lambda \rightarrow 1^-} B_{n,k}(u_\lambda',\dots,u_{\lambda}^{(n-k+1)})
=
\frac{n!}{k!} \binom{n-1}{k-1} (-1)^n \frac{d^{n-k} K^k}{(d-K)^n}.
$$
Analogously, one may compute:
$$
\lim_{\lambda \rightarrow 1^-} B_{n-1,k}(u_\lambda',\dots,u_{\lambda}^{(n-k)})
=
\frac{(n-1)!}{k!} \binom{n-2}{k-1} (-1)^{n-1} \frac{d^{n-k-1} K^k}{(d-K)^{n-1}}.
$$
Therefore \eqref{eq:1=lamTheta}, \eqref{eq:bruno} and \eqref{eq:Dtheta_lowerK} imply for $n \leq K+1$:
\begin{align*}
0
&=n \sum_{k=1}^{n-1} \frac{(n-1)!}{k!} \binom{n-2}{k-1} (-1)^{n-1} \frac{d^{n-k-1} K^k}{(d-K)^{n-1}} \Theta^{(k)}(1)\\
& +\sum_{k=2}^n \frac{n!}{k!} \binom{n-1}{k-1} (-1)^n \frac{d^{n-k} K^k}{(d-K)^n} \Theta^{(k)}(1) +\Theta^{(1)}(1) \lim_{\lambda \rightarrow 1^-} u_{\lambda}^{(n)}\\
&= n! \sum_{k=1}^{n-1} \binom{n-2}{k-1} (-1)^{n+k} \frac{d^{n-k} K^{k-1}}{(d-K)^{n-1}}
+ n! \sum_{k=1}^{n-1} \binom{n-2}{k-1} (-1)^{n+k+1}\frac{d^{n-k-1} K^{k}}{(d-K)^{n-1}}\\
&+ n! \sum_{k=2}^{n-1} \binom{n-1}{k-1} (-1)^{n+k+1} \frac{d^{n-k} K^{k-1}}{(d-K)^{n-1}}\\
&+ (-1)^n \left(\frac{K}{d-K}\right)^n \Theta^{(n)}(1) + \frac{d-K}{K} \lim_{\lambda \rightarrow 1^-} u_\lambda^{(n)}\\
&=n! (-1)^{n+1} \left( \frac{d}{d-K} \right)^{n-1} + n! \left( \frac{K}{d-K} \right)^{n-1}
+ n! \sum_{k=2}^{n-1} \left[ \binom{n-2}{k-1} + \binom{n-2}{k-2} - \binom{n-1}{k-1} \right] \cdot \\
&(-1)^{n+k} \frac{d^{n-k} K^{k-1}}{(d-K)^{n-1}}+(-1)^n \left( \frac{K}{d-K} \right)^n \Theta^{(n)}(1) + \frac{d-K}{K} \lim_{\lambda \rightarrow 1^-} u_{\lambda}^{(n)}.
\end{align*}
From Pascal's triangle we find that:
\begin{align*}
\lim_{\lambda\rightarrow 1^-} u_\lambda^{(n)}
&=
(-1)^{n+1} \left( \frac{K}{d-K} \right)^{n+1} \Theta^{(n)}(1) -n! \left( \frac{K}{d-K} \right)^n + n! (-1)^n \frac{d^{n-1} K}{(d-K)^n},
\end{align*}
for $n \leq K+1$.
Plugging in 
 \eqref{eq:Dtheta_lowerK} and \eqref{eq:Dtheta_Kplus1} yields
  \eqref{eq:Dulam_lowerK} and \eqref{eq:Dulam_Kplus1}, respectively.
\end{proof}

\subsection{Proof of Lemma \ref{lem:LLdk2}} \label{app:proof:lem:LLdk2}
\begin{proof}
Throughout, we assume that $\bar \lambda < \lambda < 1$ with $\bar \lambda$ as in Lemma \ref{lem:SQdk1}. We show there is some $\tilde{\lambda} \geq \bar \lambda$ and $b \in \mathbb{N}$ which does not depend on the value of $\lambda$ for which $h_{\lambda}(x)$ is decreasing on $[u_\lambda - \lambda^b, u_{\lambda}]$ for all $\lambda \in (\tilde \lambda, 1)$.

First we note that the derivative of $T_\lambda(u)$ as a function of $u$ is:
\begin{align*}
T_\lambda'(u)
&=\frac{\lambda}{K} \sum_{j=0}^{K-1} (K-j) (d-j) \binom{d}{j} u^{d-j-1} (1-u)^j 
-\frac{\lambda}{K} \sum_{j=0}^{K-2} (K-j-1) (j+1) \binom{d}{j+1} u^{d-j-1} (1-u)^j\\
&= \frac{\lambda}{K} \sum_{j=0}^{K-1} (d-j) \binom{d}{j} u^{d-j-1} (1-u)^j.
\end{align*}
We thus find:
\begin{align*}
Kx^2 h_\lambda'(x)
&= \bigg[ -\frac{1}{x^2} (u_\lambda - T_\lambda(u_\lambda - x)) + \frac{1}{x} T_\lambda'(u_\lambda - x) \bigg] \cdot K x^2\\
&=-Ku_\lambda + \lambda \sum_{j=0}^{K-1} (K-j) \binom{d}{j} (u_\lambda-x)^{d-j} (1-u_\lambda+x)^j\\
&+\lambda x \sum_{j=0}^{K-1} (d-j) \binom{d}{j} (u_\lambda - x)^{d-j-1} (1-u_\lambda + x)^j\\
&=-K u_\lambda + \lambda\sum_{j=0}^{K-1} (K-j) \binom{d}{j} (u_\lambda - x)^{d-j} (1-u_\lambda+x)^j\\
&-\lambda\sum_{j=0}^{K-1} (d-j) \binom{d}{j} (u_\lambda - x)^{d-j} (1-u_\lambda + x)^j\\
&+\lambda u_\lambda \sum_{j=0}^{K-1} (d-j) \binom{d}{j} (u_\lambda - x)^{d-j-1} (1-u_\lambda+x)^j.
\end{align*}
If we now define $\zeta_\lambda(x)=Kx^2h_\lambda'(x)$ we obtain:
\begin{align}
\zeta_\lambda(x)
&=
-K u_\lambda - \lambda (d-K) \sum_{j=0}^{K-1} \binom{d}{j} (u_\lambda - x)^{d-j} (1-u_\lambda+x)^j\nonumber\\
&+ \lambda u_\lambda \sum_{j=0}^{K-1} (d-j) \binom{d}{j} (u_\lambda-x)^{d-j-1} (1-u_\lambda+x)^j.\label{eq:zetalamx}
\end{align}
It therefore suffices to show that $\zeta_\lambda(x) \leq 0$ for $\lambda$ sufficiently close to one. To this end we compute:
\begin{align*}
\zeta_\lambda'(x)
&= \lambda (d-K) \binom{d}{K-1} (d-K+1) (u_\lambda - x)^{d-K} (1-u_\lambda+x)^{K-1}\\
&+ \lambda (d-K) \sum_{j=0}^{K-2} \binom{d}{j} (d-j) (u_\lambda - x)^{d-j-1} (1-u_\lambda + x)^j\\
&-\lambda (d-K) \sum_{j=0}^{K-2} \binom{d}{j+1} (j+1) (u_\lambda - x)^{d-j-1} (1-u_\lambda + x)^{j}\\
&- \lambda u_\lambda (d-K) (d-K+1) \binom{d}{K-1} (u_\lambda - x)^{d-K-1} (1-u_\lambda+x)^{K-1}\\
&-\lambda u_\lambda \sum_{j=0}^{K-2} (d-j) (d-j-1) \binom{d}{j} (u_\lambda -x)^{d-j-2} (1-u_\lambda+x)^j\\
&+ \lambda u_{\lambda} \sum_{j=0}^{K-2} \binom{d}{j+1} (d-j-1) (j+1) (u_\lambda - x)^{d-j-2} (1-u_\lambda +x)^j,
\end{align*}
which simplifies to:
\begin{equation} \label{eq:DzetaLam}
\zeta_\lambda'(x)=-\lambda (d-K) \binom{d}{K-1} (d-K+1) (1-u_\lambda+x)^{K-1} (u_\lambda-x)^{d-K-1} x.
\end{equation}
This is obviously negative for all $x \in [u_\lambda-1,u_\lambda]$. It thus suffices to show that we can find a value $b \in \mathbb{N}$ such that $\zeta_\lambda(u_\lambda-\lambda^b) \leq 0$. To this end, we find:
\begin{align*}
\zeta_\lambda(u_\lambda-\lambda^b)
&= -K u_\lambda - \lambda(d-K) \sum_{j=0}^{K-1} \binom{d}{j} \lambda^{b(d-j)} (1-\lambda^b)^j
+\lambda u_\lambda \sum_{j=0}^{K-1} (d-j)  \binom{d}{j} \lambda^{b(d-j-1)} (1-\lambda^b)^j\\
&= -K u_\lambda - \lambda (d-K) \sum_{j=0}^{K-1} \binom{d}{j} \lambda^{b(d-j)} (1-\lambda^b)^j
+ \frac{\lambda u_\lambda}{\lambda^b} (d-K) \sum_{j=0}^{K-1} \binom{d}{j} \lambda^{b(d-j)} (1-\lambda^b)^j\\
&+\frac{\lambda u_{\lambda}}{\lambda^b} \sum_{j=0}^{K-1} (K-j) \binom{d}{j} \lambda^{b(d-j)} (1-\lambda^b)^j\\
&= -\frac{K\lambda}{\lambda^b} \big(\lambda^b \frac{T_\lambda(u_\lambda)}{\lambda} - u_\lambda \frac{T_\lambda(\lambda^b)}{\lambda} \big)
+ \frac{\lambda}{\lambda^b} (d-K) (u_\lambda - \lambda^b) \sum_{j=0}^{K-1} \binom{d}{j} \lambda^{b(d-j)} (1-\lambda^b)^j.
\end{align*}
Now let us denote $\Theta(u) = \frac{1}{\lambda} \frac{T_\lambda(u)}{u}$, we find that:
\begin{align}
\lambda^b \zeta_\lambda\left(u_\lambda-\lambda^b\right)
&= -\lambda K u_\lambda \lambda^b (\Theta(u_\lambda) - \Theta(\lambda^b))
 + \lambda (d-K) (u_\lambda - \lambda^b) \sum_{j=0}^{K-1} \binom{d}{j} \lambda^{b(d-j)} (1-\lambda^b)^j. \label{eq:lambzetaulamminlamb}
\end{align}
For now let us focus on the case $K=d-1$. By \eqref{eq:Theta_n_diff} and \eqref{eq:Ej} we have 
for $n \leq K = d-1$ that
%\begin{align*}
%\Theta'(u) &= \frac{1}{d-1} \sum_{j=0}^{d-2} \binom{d}{j} u^{d-j-2} (1-u)^j\\
%\Theta''(u) &= -2 \frac{1}{d-1} \sum_{j=0}^{d-3} \binom{d}{j} u^{d-j-3} (1-u)^j\\
%\Theta^{(3)}(u)&=3! \frac{1}{d-1} \sum_{j=0}^{d-4} \binom{d}{j} u^{d-j-4} (1-u)^j.
%\end{align*}
%In general we have:
\begin{equation}\label{eq:DnTheta}
\Theta^{(n)}(u)=(-1)^{n+1} n! \frac{1}{d-1} \sum_{j=0}^{d-n-1} \binom{d}{j} u^{d-j-n-1} (1-u)^j,
\end{equation}
as $E_j=0$ when $K=d-1$. Note that $\Theta^{(d-1)}(u)=(-1)^d (d-2)!$ is constant and therefore
$\Theta^{(n)}(u)=0$ for $n > K=d-1$. 

Employing the Taylor expansion of $\Theta(u_\lambda)$ at $\lambda^b$, we find that \eqref{eq:lambzetaulamminlamb} can be written as:
\begin{align}
\lambda^b \zeta_\lambda(u_\lambda-\lambda^b)
&=
\lambda \bigg[ (u_\lambda - \lambda^b) \sum_{j=0}^{d-2} \binom{d}{j} \lambda^{b(d-j)} (1-\lambda^b)^j
- u_\lambda \lambda^b \sum_{n=1}^{d-1} (d-1) \Theta^{(n)}(\lambda^b) \frac{(u_\lambda-\lambda^b)^n}{n!}\bigg].\label{eq:taylor_exp}
\end{align}
Due to \eqref{eq:DnTheta}
\begin{align*}
u_\lambda \lambda^b \sum_{n=1}^{d-1} (d-1) \Theta^{(n)}(\lambda^b) \frac{(u_\lambda-\lambda^b)^n}{n!} 
&= u_\lambda \sum_{n=1}^{d-1} (-1)^{n+1} (u_\lambda-\lambda^b)^n \sum_{j=0}^{d-n-1} \binom{d}{j} \lambda^{b(d-j-n)} (1-\lambda^b)^j\\
&=-u_\lambda \sum_{j=0}^{d-2} \binom{d}{j} \lambda^{b(d-j)} (1-\lambda^b)^j \sum_{n=1}^{d-j-1} 
 (1-u_\lambda/\lambda^b)^n\\
 &=-u_\lambda \sum_{j=0}^{d-2} \binom{d}{j} \lambda^{b(d-j)} (1-\lambda^b)^j \left(
\frac{1-(1-u_\lambda/\lambda^b)^{d-j}}{u_\lambda/\lambda^b}-1 \right)\\
 &= \sum_{j=0}^{d-2} \binom{d}{j} (1-\lambda^b)^j \left(
\lambda^{b(d-j)}  (u_\lambda - \lambda^b)+\lambda^b (\lambda^b-u_\lambda)^{d-j}\right).   
\end{align*}
%In \eqref{eq:taylor_exp}, we find by subtracting only the first term:
%\begin{align*}
%&(u_\lambda - \lambda^b) \sum_{j=0}^{d-2} \binom{d}{j} \lambda^{b(d-j)} (1-\lambda^b)^j\\
%&- u_\lambda \lambda^b (u_\lambda - \lambda^b) \sum_{j=0}^{d-2} \binom{d}{j} \lambda^{b(d-j-2)} (1-%\lambda^b)^j\\
%&= - (u_\lambda - \lambda^b)^2 \sum_{j=0}^{d-2} \binom{d}{j} \lambda^{b(d-j-1)} (1-\lambda^b)^j.
%\end{align*}
%Subtracting the next term (i.e.~$n=2$) from the remaining summation in this result, we obtain:
%\begin{align*}
%&-(u_\lambda-\lambda^b)^2 \sum_{j=0}^{d-2} \binom{d}{j} \lambda^{b(d-j-1)} (1-\lambda^b)^j\\
%&+u_\lambda \lambda^b  \sum_{j=0}^{d-3} \binom{d}{j} \lambda^{b(d-j-3)} (1-\lambda^b)^j (u_\lambda - \lambda^b)^2\\
%&=-(u_\lambda-\lambda^b)^2 \binom{d}{d-2} \lambda^{b} (1-\lambda^b)^{d-2}\\
%&+(u_\lambda - \lambda^b)^3 \sum_{j=0}^{d-3} \binom{d}{j} \lambda^{b(d-j-2)} (1-\lambda^b)^j.
%\end{align*}
%Subtracting the next term (i.e.~$n=3$) from the latter summation in this result, we obtain:
%\begin{align*}
%&(u_\lambda - \lambda^b)^3 \binom{d}{d-3} \lambda^b (1-\lambda^b) ^{d-3}\\
%&- (u_\lambda - \lambda^b)^4 \sum_{j=0}^{d-4} \binom{d}{j} \lambda^{b(d-j-3)} (1-\lambda^b)^j.
%\end{align*}
%Continuing in this way 

Combined with \eqref{eq:taylor_exp} this yields:
\begin{align*}
\lambda^b \zeta_\lambda(u_\lambda-\lambda^b)
&= \lambda^{b+1} \sum_{j=0}^{d-2} (-1)^{d-j+1} \binom{d}{j} (u_\lambda-\lambda^b)^{d-j} (1-\lambda^b)^j.
\end{align*}
Dividing by $(u_\lambda-\lambda^b)^d$ we find that:
\begin{align*}
\frac{\lambda^b}{(u_\lambda-\lambda^b)^d} \zeta_\lambda(u_\lambda-\lambda^b)
&=\lambda^{b+1} \sum_{j=0}^{d-2} (-1)^{d-j+1} \binom{d}{j} \left(\frac{1-\lambda^b}{u_\lambda-\lambda^b}\right)^j.
\end{align*}
It is easy to show by applying l'Hopital's rule and using the fact that $\lim_{\lambda \rightarrow 1^-}  u_\lambda' = -d+1$ for $K=d-1$ that: 
$$
\lim_{\lambda \rightarrow 1^-} \left( \frac{1-\lambda^b}{u_\lambda - \lambda^b} \right)
=
\frac{b}{b+d-1}.
$$
Therefore we find
\begin{align*}
&\lim_{\lambda \rightarrow 1^-} \frac{\lambda^b}{(u_\lambda-\lambda^b)^d} \zeta_\lambda(u_\lambda-\lambda^b) = \sum_{j=0}^{d-2} (-1)^{d-j+1} \binom{d}{j} \left(\frac{b}{b+d-1}\right)^j\\
&=-d \left( \frac{b}{b+d-1} \right)^{d-1} + \left(\frac{b}{b+d-1}\right)^d + (-1)^{1+d} \left( \frac{d-1}{b+d-1} \right)^d,
\end{align*}
which converges to $1-d \leq 0$ as $b$ tends to infinity. This proves Lemma \ref{lem:LLdk2}
for $K=d-1$.

Fix $K$ and let $d \geq K+1$ be variable, we find (apply (\ref{eq:zetalamx}-\ref{eq:DzetaLam}) that:
\begin{align*}
\zeta_\lambda'(x)= - \lambda(d-K) \binom{d}{K-1} (d-K+1) (1-u_\lambda+x)^{K-1}(u_\lambda-x)^{d-K-1} x,
\end{align*}
and
\begin{align}\label{eq:zeta(ulam-1)}
\zeta_\lambda(u_\lambda-1)=(\lambda d - K) u_\lambda - \lambda (d-K).&
\end{align}
Now let $(K_1,d_1)$ and $(K_2,d_2)$ be arbitrary (with $K_i < d_i$), denote by $_i u _\lambda$ the fixed point associated to $(K_i,d_i)$ and $_i\zeta_\lambda$ the associated $\zeta_\lambda$ function. We show the following inequalities :
\begin{align}\label{eq:ineq_zeta_prime}
_2 \zeta _\lambda'(x) & \leq\  _1\zeta _\lambda'(x+\ _1u_\lambda -\ _2u_\lambda) 
\end{align}
for  $x \in [\ _2u_\lambda - 1,\ _2u_\lambda-\lambda^b]$ and
\begin{align}
_2\zeta_\lambda(_2 u _\lambda - 1) &\leq\ _1\zeta _\lambda(_1 u _\lambda - 1), & \label{eq:ineq_zeta_ulam_min_1}
\end{align}
in case we have:
\begin{enumerate}[label=(\roman*),topsep=0pt]
\item \label{enum:Keven} $K$ is even, $(K_1,d_1)=(K,d)$ and $(K_2,d_2)=(K,d+1)$,
\item \label{enum:Kodd} $K$ is odd, $(K_1,d_1)=(K+1,d+1)$ and $(K_2,d_2)=(K,d)$.
\end{enumerate}
If (\ref{eq:ineq_zeta_prime}-\ref{eq:ineq_zeta_ulam_min_1}) hold, we find that:
\begin{align*}
_2\zeta_\lambda(_2u_\lambda-\lambda^b)
&=\ _2\zeta_\lambda( _2u_\lambda-1)+\int_{_2 u_\lambda-1}^{_2 u_\lambda-\lambda^b} \ _2 \zeta_\lambda'(x)\, dx\\
&\leq\ _1 \zeta_\lambda(_1u_\lambda-1)+\int_{_2u_\lambda-1}^{_2u_\lambda-\lambda^b} \ _1 \zeta_\lambda'(x+\ _1u_{\lambda} -\ _2u_{\lambda})\, dx
=\ _1 \zeta_\lambda(_1 u_{\lambda} - \lambda^b)
\end{align*}
This shows that if $\ _1 \zeta_\lambda(\ _1 u_{\lambda} - \lambda^b) \leq 0$, then also $_2 \zeta_\lambda(_2u_\lambda-\lambda^b) \leq 0$. Applying \ref{enum:Keven} would then conclude the proof for $K$ even as we already established the result for $K=d-1$. Having shown the result for $K$
even then implies that the result also holds for $K$ odd by applying \ref{enum:Kodd}. 
\newline
First, we show \eqref{eq:ineq_zeta_prime} for \ref{enum:Keven}. To this end we let $x \in [\ _2u_{\lambda}-1,\ _2u_{\lambda} - \lambda^d]$ be arbitrary, we find that $_2 \zeta_{\lambda}'(x) \leq\ _1 \zeta_\lambda'(x+\ _1u_{\lambda} -\ _2 u_\lambda)$ is equivalent to:
$$
\left(1 + \frac{_1 u_{\lambda} - _2 u _{\lambda}}{x}\right) \frac{1}{ _2u_\lambda -x} \leq \frac{(d_2-K) \binom{d_2}{K-1} (d_2-K+1)}{(d_1-K) \binom{d_1}{K-1} (d_1-K+1)}.
$$
This can be shown to hold for $\lambda$ sufficiently close to $1$ by noting that
for  $x \in [\ _2u_\lambda - 1,\ _2u_\lambda-\lambda^b]$ we have
\begin{align*}
\lim_{\lambda\rightarrow 1^-} &\left(1 + \frac{_1 u_{\lambda} - _2 u _{\lambda}}{x}\right) \frac{1}{ _2u_\lambda -x} 
\leq \lim_{\lambda\rightarrow 1^-} \left(1+ \frac{_1 u_{\lambda} -\ _2 u _{\lambda}}{_2u_\lambda-1}
\right) \frac{1}{\lambda^b}= \frac{d_2-K}{d_1-K}
\leq \frac{(d_2-K) \binom{d_2}{K-1} (d_2-K+1)}{(d_1-K) \binom{d_1}{K-1} (d_1-K+1)},
\end{align*}
from this we find that \eqref{eq:ineq_zeta_prime} indeed holds in case \ref{enum:Keven} for any $K$ and
thus certainly for $K$ even. 
\newline
We now consider \eqref{eq:ineq_zeta_ulam_min_1} for case \ref{enum:Keven}. Due to
\eqref{eq:zeta(ulam-1)} one finds for any $n\geq 1$ that:
\begin{equation}\label{eq:Dn_lam_zeta}
\left( \frac{\partial}{\partial \lambda} \right)^n \zeta_\lambda(u_\lambda-1)
=
n d u_\lambda^{(n-1)} + (\lambda d- K) u_\lambda^{(n)} - \delta_{\{n=1\}} (d-K).
\end{equation}
We employ \eqref{eq:Dulam_lowerK}, to conclude that for $n \leq K$:
\begin{equation}\label{eq:Dn_zeta_ulam_min_1}
\lim_{\lambda \rightarrow 1^-} \left( \frac{\partial}{\partial \lambda} \right)^n \zeta_\lambda(u_\lambda-1)=0,
\end{equation}
while for $n=K+1$ we find from (\ref{eq:Dulam_lowerK}-\ref{eq:Dulam_Kplus1}):
\begin{equation}\label{eq:DKplus1_zeta_ulam_min_1}
\lim_{\lambda \rightarrow 1^-} \left( \frac{\partial}{\partial \lambda} \right)^{K+1} \zeta_\lambda(u_\lambda-1)
=
-\frac{K \cdot d!}{(d-K)!} \cdot \left( \frac{K}{d-K} \right)^K.
\end{equation}
We now denote 
\begin{equation}\label{eq:Hn_proof_LLdK}
H_n=\lim_{\lambda \rightarrow 1^-} \left( \frac{\partial}{\partial \lambda} \right)^n \left(_2\zeta_\lambda(\ _2u_\lambda-1)-\ _1\zeta_\lambda(\ _1u_\lambda-1)\right),
\end{equation}
From \eqref{eq:Dn_zeta_ulam_min_1} we clearly have $H_n=0$ for $0 \leq n \leq K$. For $n=K+1$ we find:
$$
H_{K+1}
=
\frac{d!}{(d-K)!} \cdot \left( K \left( \frac{K}{d-K} \right)^K - (d+1) \left( \frac{K}{d+1-K} \right)^{K+1} \right).
$$
which is positive if and only if:
$$
(1+d-K)^{K+1} - (d+1) (d-K)^K > 0
$$
Letting $d=K+y$ (for $y \geq 1$) we find that this is equivalent to:
\begin{align*}
(1+y)^{K+1} - (1+K+y) y^K > 0.
\end{align*}
As $(1+y)^{K+1} - (1+K+y) y^K = \sum_{j=0}^{K-1} \binom{K+1}{j} y^j$, which is positive
for $K\geq 2$ and $y\geq 0$, we conclude that
$H_{K+1}$ is positive. 

By looking at the Taylor series expansion of $_2\zeta_\lambda(_2 u _\lambda - 1) -\ _1\zeta _\lambda(_1 u _\lambda - 1)$ in $\lambda=1$ and noting that $H_0 = \ldots = H_K = 0$, 
we note that for $\lambda$ sufficiently close to one:
$$_2\zeta_\lambda(_2 u _\lambda - 1) -\ _1\zeta _\lambda(_1 u _\lambda - 1) \approx
H_{K+1} (\lambda - 1)^{K+1}/(K+1)!,
$$
which is negative for $K$ even. This shows that \eqref{eq:ineq_zeta_ulam_min_1} indeed holds for case \ref{enum:Keven}.

We now consider \eqref{eq:ineq_zeta_prime} for \ref{enum:Kodd}, using simple computations we find that this is equivalent to
$$
\binom{d+1}{K} (1-\ _2u_\lambda + x) (x+\ _1 u_\lambda -\ _2 u _\lambda) \leq \binom{d}{K-1} x,
$$
for $x \in [\ _2u_\lambda-1, \ _2u_{\lambda}-\lambda^b]$.
It therefore suffices to show that for $\lambda$ close to one, we have:
$$
(1-\ _2u_{\lambda}+x) \left(1 + \frac{\ _1 u _\lambda -\ _2 u_\lambda}{_2 u_\lambda - 1} \right) \leq \frac{K}{d+1}.
$$
This holds as  $(1-\ _2u_{\lambda}+x)$ converges to zero as 
$\lambda \rightarrow 1^-$ and $\lim_{\lambda \rightarrow 1^-} 1 + (\  _1 u _\lambda -\ _2 u_\lambda)/(\ _2 u_\lambda - 1)  = (K+1)/K$.
\newline
The final step is to show \eqref{eq:ineq_zeta_ulam_min_1} for \ref{enum:Kodd}. If we define $H_n$ as in \eqref{eq:Hn_proof_LLdK} and make use of \eqref{eq:Dn_zeta_ulam_min_1} and \eqref{eq:DKplus1_zeta_ulam_min_1}, we find that $H_n=0$ for $n \leq K$ while for $n=K+1$ we have:
$$
H_{K+1}= \lim_{\lambda \rightarrow 1^-} \left( \frac{\partial}{\partial \lambda} \right)^{K+1}\ 
_2\zeta_\lambda(\ _2u_\lambda-1) = -K \left( \frac{K}{d-K} \right)^K \frac{d!}{(d-K)!} <0.
$$
As $K$ is odd, $H_{K+1} (\lambda - 1)^{K+1}/(K+1)!$ is negative, which completes the proof.
\end{proof}

\subsection{Proof of Proposition \ref{prop:LLd1dn_F}} \label{app:LLd1dn_F}
\begin{proof}
The most direct method to show that \eqref{eq:Fbar_LLd1dn} indeed holds is to set $d=\max_{i=1}^n\{d_i\}$ and note that from Theorem 5.2 in \cite{hellemans2019workload} it follows that (for any $w \in [0,\infty)$):
\begin{equation} \label{eq:WL_dependent}
\bar F'(w)=-\lambda \sum_{i=1}^n p_i d_i \P\left\{U \leq w, \mathcal{Q}_i(U) > w \right\},
\end{equation}
where $\mathcal{Q}_i(U)$ represents the workload at an arbitrary queue with workload $U$ after it was one of $d_i$ selected servers for a job arrival. Note that we have:
\begin{align*}
&\P\left\{U \leq w, \mathcal{Q}_i(U) > w \right\}
=\P\left\{0<U \leq w, \mathcal{Q}_i(U) > w \right\} + \P\left\{U =0, \mathcal{Q}_i(U) > w \right\}.
\end{align*}
$\P\left\{0<U \leq w, \mathcal{Q}_i(U) > w \right\}$ is equal to (apply integration by parts):
\begin{align*}
\int_0^w f(u) \bar F(u)^{d_i-1} e^{u-w} \, du 
= \frac{1}{d_i} \bigg( \lambda ^{d_i} e^{-w} - \bar F(w)^{d_i} + \int_0^w \bar F(u)^{d_i} e^{u-w} \, du \bigg)
\end{align*}
with $f(u)$ the density of the workload distribution. For $\P\left\{U =0, \mathcal{Q}_i(U) > w \right\}$ we compute:
\begin{align*}
e^{-w} (1-\bar F(0)) \cdot \sum_{j=0}^{d_i-1} \binom{d_i-1}{j} \frac{(1-\bar F(0))^j \bar F(0)^{d_i-1-j}}{j+1}
= e^{-w} \frac{1-\bar F(0)^{d_i}}{d_i} = e^{-w} \frac{1-\lambda^{d_i}}{d_i}.
\end{align*}
This allows us to conclude, using \eqref{eq:WL_dependent} that:
\begin{equation}
\bar F'(w)
=
-\lambda \sum_{i=1}^n p_i \left( e^{-w} - \bar F(w)^{d_i} + \int_0^w \bar F(u)^{d_i} e^{u-w} \, du \right).\label{eq:WL_dependent_LLd1dn}
\end{equation}
Integrating both sides of \eqref{eq:WL_dependent_LLd1dn} we obtain:
\begin{align*}
\bar F(w) - \bar F(0)
&= -\lambda \sum_{i=1}^n p_i \bigg( 1-e^{-w}-\int_0^w \bar F(u)^{d_i} \, du
+ \int_0^w \int_0^u \bar F(v)^{d_i} e^{v-u} \, dv \, du \bigg)\\
&= - \lambda \sum_{i=1}^n p_i \bigg( 1-e^{-w} - \int_0^w e^{u-w} \bar F(u)^{d_i} \, du \bigg).
\end{align*}
We therefore find that:
$$
\lambda \sum_{i=1}^n p_i \left( e^{-w} + \int_0^w e^{u-w} \bar F(u)^{d_i} \, du \right) = \bar F(w).
$$
Using this to further simplify \eqref{eq:WL_dependent_LLd1dn} allows us to conclude that \eqref{eq:Fbar_LLd1dn} indeed holds.
\end{proof}

\subsection{Proof of Lemma \ref{lem:requirements_SQd1dn}}\label{app:lem:requirements_SQd1dn}
\begin{proof}
Define the function $r(u)=\lambda \sum_{i=1}^n p_i u^{d_i} - u$. We find that $r(1) = \lambda - 1 <0$ and it is obvious that $r(u)$ tends to infinity as $u$ tends to infinity. This shows that there certainly is a $u \in (1,\infty)$ for which $r(u)=0$. Now let:
$$
u_{\lambda} = \min\{u\in (1,\infty) \mid r(u) = 0\}.
$$
We find that for all $u > u_{\lambda}$:
\begin{align*}
r'(u)
&= \lambda \sum_{i=1}^n p_i d_i u^{d_i-1} - 1 > \left(\lambda \sum_{i=1}^n p_i d_i u_{\lambda}^{d_i} - u_{\lambda}\right)/u_{\lambda} 
= \lambda \sum_{i=1}^n p_i (d_i-1) u_{\lambda}^{d_i-1} \geq 0,
\end{align*}
this shows that $r(u) >0$ for all $u > u_{\lambda}$ and hence uniqueness follows. For the other claims we have:
\begin{enumerate}[label=(\alph*),leftmargin=*]
\item This follows from $\lim_{\lambda \rightarrow 1^-} r(1) = 0$.
\item This trivially follows from the fact that $u^{d_i} \leq u$ for all $d_i \geq 1$ and $u \in [0,1]$.
\item For the function $\xi_i(x)$ defined in \ref{lem:requirements_SQd1dn}\ref{enum:d1dn_7} we have
$$
x^2 \cdot \xi_i'(x)= -(u_\lambda)^{d_i} + (u_\lambda - x)^{d_i-1} (u_\lambda + (d_i-1) x ).
$$
Computing the derivative of this, we find:
\begin{align*}
(x^2 \cdot \xi_i'(x))'& = -(d_i-1) d_i (u_\lambda - x)^{d_i-2} x \leq 0,
\end{align*}
for $x \in [u_\lambda-1,u_\lambda]$.
From the fact that $(u_\lambda-1)^2 \xi_i'(u_\lambda-1) = 1+(u_\lambda-1)d_i -(u_\lambda)^{d_i} \leq 0$, we can now incur that $x^2 \cdot \xi_i'(x)$ is negative on $[u_\lambda-1,u_\lambda]$ and therefore
$\xi_i(x)$ is indeed decreasing on $[u_\lambda-1,u_\lambda]$.
This suffices to show  assumption \ref{item:h_decreasing} with
$b=0$ and assumption \ref{item:ODE_only_new_req} with $\bar k =0$ as we have:
\begin{align*}
h_\lambda(x)
&=
\frac{T(u_\lambda) - \lambda \sum_{i=1}^n p_i (u_\lambda - x)^{d_i}}{x}= \lambda \sum_{i=1}^n p_i \xi_i(x). 
\end{align*}
\item First one may compute the limit:
$$
\lim_{\lambda \rightarrow 1^-} h_\lambda(\varepsilon)= \frac{1-\sum_{i=1}^n p_i (1-\varepsilon)^{d_i} }{\varepsilon},
$$
taking the limit of $\varepsilon \rightarrow 0^+$ we obtain:
$$
\lim_{\varepsilon \rightarrow 0^+} \lim_{\lambda\rightarrow 1^-} h_\lambda(\varepsilon) =  \sum_{i=1}^n p_i \lim_{\varepsilon \rightarrow 0^+} \frac{(1-(1-\varepsilon)^{d_i})}{\varepsilon}=\sum_{i=1}^n p_i d_i.
$$
\end{enumerate}
\end{proof}

\subsection{Proof of Proposition \ref{prop:WL_LLd}} \label{app:prop:WL_LLd}
\begin{proof}
Recall from Corollary \ref{cor:gen_result} that the average queue length equals the average workload. The remaining proof goes along the same lines as the proof of Theorem 5.2 in \cite{hellemans2018power} and relies
on the Hypergeometric function ${}_2F_1(a,b,c;z)$ for which the following two properties hold:
\begin{align}
{}_{2}F_1(a,b,c;z)
&=
(1-z)^{-a} \cdot {}_{2}F_1\left(a,c-b,c;\frac{z}{z-1}\right) \label{eq:2F1}\\
{}_2F_1(a,b,c;z)
&= \sum_{n=0}^\infty \frac{(a)_n (b)_n}{(c)_n} \frac{z^n}{n!}  \qquad \mbox{ if } |z|<1.\label{eq:2F1_sum}
\end{align}
Here $(\cdot )_n$ is the Pochhammer symbol (or falling factorial) we have $(q)_n = \prod_{k=0}^{n-1} (q+k)$. We apply \eqref{eq:2F1} to ensure that $z\in (0,1)$ which in turn allows us to apply the sum formula \eqref{eq:2F1_sum}.

The mean workload is given by $\int_{0}^\infty  \bar F(w)\, dw$. Using $y=e^{-w}$ we find that it equals:
\begin{equation}
-\lambda \int_0^1 \left(\frac{b}{p\lambda^d + (b-p\lambda^d)y^{b(d-1)}}\right)^{\frac{1}{d-1}} \frac{1}{y}\, dy, \label{eq:meanWL1}
\end{equation}
with $b=1-(1-p)\lambda$.
By definition of the Hypergeometric function \eqref{eq:meanWL1} is equal to 
$$\frac{\lambda}{b} \left( 1 + \frac{p \lambda^d}{b-p\lambda^d} \right)^{\frac{1}{d-1}} {}_{2}F_1\left( \frac{1}{d-1}, \frac{1}{d-1}, 1 + \frac{1}{d-1}; \frac{-p \lambda^d}{b-p\lambda^d}\right).$$
Equality \eqref{eq:2F1} allows us to rewrite the mean workload as 
$$\frac{\lambda}{b}{}_{2}F_1\left( \frac{1}{d-1}, 1, 1+\frac{1}{d-1}; \frac{p \lambda^d}{b} \right).$$
As $p \lambda^d/b \in (0,1)$, \eqref{eq:2F1_sum} implies that the mean workload is given by:
$$
\frac{\lambda}{b} \sum_{n=0}^\infty \frac{\left(\frac{1}{d-1}\right)_n (1)_n}{\left(1+\frac{1}{d-1}\right)_n} \left( \frac{p \lambda^d}{b} \right)^n.
$$
Using this and the fact that $(1)_n=n!$, we obtain the result.
\end{proof}

\end{document}